\numberwithin{equation}{section}
\numberwithin{figure}{section}
\theoremstyle{definition}
\newtheorem{defn}{\protect\definitionname}[section]
\theoremstyle{plain}
\newtheorem{prop}[defn]{\protect\propositionname}
\theoremstyle{plain}
\newtheorem{thm}[defn]{\protect\theoremname}
\theoremstyle{plain}
\newtheorem{lem}[defn]{\protect\lemmaname}
\theoremstyle{remark}
\newtheorem{rem}[defn]{\protect\remarkname}
\theoremstyle{remark}
\theoremstyle{plain}
\theoremstyle{plain}
\newtheorem*{thm*}{\protect\theoremname}
\newtheorem{conj}[defn]{Conjecture}
\providecommand{\claimname}{Claim}
\providecommand{\definitionname}{Definition}
\providecommand{\lemmaname}{Lemma}
\providecommand{\propositionname}{Proposition}
\providecommand{\remarkname}{Remark}
\providecommand{\corollaryname}{Corollary}
\providecommand{\theoremname}{Theorem}
\providecommand{\claimname}{Claim}
\providecommand{\corollaryname}{Corollary}
\providecommand{\definitionname}{Definition}
\providecommand{\lemmaname}{Lemma}
\providecommand{\propositionname}{Proposition}
\providecommand{\remarkname}{Remark}
\providecommand{\theoremname}{Theorem}
\providecommand{\theoremname}{Theorem}
\def\undertilde#1{\mathord{\vtop{\ialign{##\crcr
				$\hfil\displaystyle{#1}\hfil$\crcr\noalign{\kern1.5pt\nointerlineskip}
				$\hfil\tilde{}\hfil$\crcr\noalign{\kern1.5pt}}}}}
\newcommand{\fieldCharacter}{\psi}
\newcommand{\UpperTriangularAdditive}{\mathcal{B}}
\newcommand{\weylElement}[1]{w_{#1}}
\newcommand{\permutationMatrix}[1]{\sigma_{#1}}
\newcommand{\oddPermutationMatrix}{\permutationMatrix{2m+1}}
\newcommand{\evenPermutationMatrix}{\permutationMatrix{2m}}
\newcommand{\NilpotentLowerTriangular}{\mathcal{N}^{-}}
\newcommand{\NilpotentUpperTriangular}{\mathcal{N}^{+}}
\newcommand{\JS}[1]{J}
\newcommand{\TwistedJS}[1]{J}
\newcommand{\TwistedDualJS}[1]{\tilde{J}}
\newcommand{\DualJS}[1]{\tilde{J}}
\newcommand{\rowvector}[1]{\varepsilon_{#1}}
\newcommand{\firstrowvector}{\rowvector{1}}
\newcommand{\lastrowvector}{\varepsilon}
\newcommand{\TwistedExteriorSquareLFunction}[3]{L\left(#1, #2, \wedge^2 \otimes #3 \right)}
\newcommand{\exteriorSquareLFunction}[2]{L\left(#1, #2, \wedge^2 \right)}
\newcommand{\ShalikaDiagonalElement}[1]{\begin{pmatrix}
		#1 &\\		
		& #1
\end{pmatrix}}
\newcommand{\ShalikaUnipotentElement}[1]{\begin{pmatrix}
		\IdentityMatrix{m} & #1\\		
		& \IdentityMatrix{m}
\end{pmatrix}}
\newcommand{\SmallShalikaDiagonalElement}[1]{\left(\begin{smallmatrix}
		#1 &\\		
		& #1
	\end{smallmatrix}\right)}
\newcommand{\SmallShalikaUnipotentElement}[1]{\left(\begin{smallmatrix}
		\IdentityMatrix{m} & #1\\		
		& \IdentityMatrix{m}
	\end{smallmatrix}\right)}
\newcommand{\ShalikaDiagonalElementOdd}[1]{\begin{pmatrix}
		#1 & &\\		
		& #1 &\\
		& & 1
\end{pmatrix}}
\newcommand{\ShalikaUnipotentElementOdd}[1]{\begin{pmatrix}
		\IdentityMatrix{m} & #1 &\\		
		& \IdentityMatrix{m} & \\
		& & 1
\end{pmatrix}}
\newcommand{\ShalikaLowerUnipotentElementOdd}[1]{\begin{pmatrix}
		\IdentityMatrix{m} & &\\		
		& \IdentityMatrix{m} & \\
		& #1 & 1
\end{pmatrix}}
\newcommand{\ShalikaUpperRightUnipotentElementOdd}[1]{\begin{pmatrix}
		\IdentityMatrix{m} & & #1\\		
		& \IdentityMatrix{m} & \\
		&  & 1
\end{pmatrix}}
\newcommand{\WhittakerModelOfLocalFieldRepresentation}{\Whittaker \left(\localFieldRepresentation, \fieldCharacter\right)}
\newcommand{\gammaFactorOfLocalField}[1]{\gamma \left(#1, \localFieldRepresentation, \wedge^2, \fieldCharacter \right)}
\newcommand{\unitaryDetCharacter}{\mu}
\newcommand{\TwistedEpsilonFactor}[2]{\epsilon \left(#1, \localFieldRepresentation, \wedge^2 \otimes #2, \fieldCharacter \right)}
\newcommand{\twistedGammaFactor}[3]{\gamma \left(#1, #2, \wedge^2 \otimes #3, \fieldCharacter \right)}
\newcommand{\TwistedEpsilonFactorOfLocalField}[1]{\TwistedEpsilonFactor{s}{\unitaryDetCharacter}}
\newcommand{\twistedGammaFactorOfLocalField}[1]{\twistedGammaFactor{s}{\localFieldRepresentation}{\unitaryDetCharacter}}
\newcommand{\epsilonFactorOfLocalField}[1]{\epsilon \left(#1, \localFieldRepresentation, \wedge^2, \fieldCharacter \right)}
\newcommand{\JSOfLocalFieldRepresentation}[3]{\JS{\localFieldRepresentation} \left(#1,#2, #3, \fieldCharacter \right)}
\newcommand{\TwistedJSOfLocalFieldRepresentation}[3]{\TwistedJS{\localFieldRepresentation}\left(#1,#2, #3, {\unitaryDetCharacter}, {\fieldCharacter} \right)}
\newcommand{\TwistedDualJSOfLocalFieldRepresentation}[3]{\TwistedDualJS
	{\localFieldRepresentation} \left(#1,#2, #3, {\unitaryDetCharacter}, {\fieldCharacter} \right)}
\newcommand{\DualJSOfLocalFieldRepresentation}[3]{\DualJS{\localFieldRepresentation} \left(#1,#2, #3, \fieldCharacter\right)}
\newcommand{\realPartHalfRightPlaneLocalFieldRepresentation}{r_{\localFieldRepresentation, \wedge^2}}
\newcommand{\realPartHalfRightPlaneLocalFieldDualRepresentation}{r_{\Contragradient{\localFieldRepresentation}, \wedge^2}}
\newcommand{\localFieldRepresentationContragradientIndex}{{\Contragradient{\localFieldRepresentation}}}
\newcommand{\localFieldRepresentationIndex}{{\localFieldRepresentation}}
\newcommand{\affineGenericCharacter}{\chi}
\newcommand{\rootOfUniformizer}{\zeta}
\newcommand{\uniformizerValueForTwistedGamma}{\xi}
\newcommand{\proUnipotentRadical}{I^{+}}
\newcommand{\centerTimesProUnipotent}{H}
\newcommand{\modifiedCenterTimesProUnipotent}{H'}
\newcommand{\multiplicativeCharacter}{\omega}
\newcommand{\twistedMultiplicativeExteriorCharacter}{\exteriorCharacter{\left(\multiplicativeCharacter \cdot \unitaryDetCharacter^m \right)}}
\newcommand{\exteriorCharacter}[1]{#1_{\uniformizerValueForTwistedGamma}}
\newcommand{\cyclicGroupGenerator}[1]{g_{#1}}
\newcommand{\simpleRep}{\sigma_{\affineGenericCharacter}^{\rootOfUniformizer}}
\newcommand{\antidiagPermutationMatrix}[1]{w_{#1}}
\newcommand{\antidiagPermutation}[1]{\tau_{#1}}
\newcommand{\uniformizerDiagonalMatrix}[1]{d_{#1}}
\newcommand{\modulooperator}{\operatorname{mod}}
\newcommand{\zIntegers}{\mathbb{Z}}
\newcommand{\rReal}{\mathbb{R}}
\newcommand{\cComplex}{\mathbb{C}}
\newcommand{\multiplicativegroup}[1]{#1^{\ast}}
\newcommand{\RealPart}{\mathrm{Re}}
\newcommand{\Span}{\mathrm{span}}
\newcommand{\conjugate}[1]{\overline{#1}}
\newcommand{\indicatorFunction}[1]{\delta_{#1}}
\newcommand{\isomorphic}{\cong}
\newcommand{\sizeof}[1]{\left|#1\right|}
\newcommand{\standardForm}[2]{\left\langle #1,#2\right\rangle}
\newcommand{\centralCharacter}[1]{\omega_{#1}}
\newcommand{\CompactInd}[3]{\mathrm{ind}_{#1}^{#2}\left(#3\right)}
\newcommand{\UnipotentSubgroup}{N}
\newcommand{\rightHaarMeasureModulus}[1]{\delta_{#1}^{-1}}
\newcommand{\Schwartz}{\mathcal{S}}
\newcommand{\Whittaker}{\mathcal{W}}
\newcommand{\Contragradient}[1]{\widetilde{#1}}
\newcommand{\FourierTransformWithRespectToCharacter}[2]{\mathcal{F}_{#2}#1}
\newcommand{\representationDeclaration}[1]{\left(#1, V_{#1}\right)}
\newcommand{\gammaFactorOfCharacter}[2]{\gamma\left(#1,#2, \fieldCharacter\right)}
\newcommand{\maximalCompactSubgroup}{K}
\newcommand{\borelSubgroup}{B}
\newcommand{\weylGroup}[1]{W_{#1}}
\newcommand{\uniformizer}{\varpi}
\newcommand{\modifiedUniformizer}{t^{-1} \varpi}
\newcommand{\integersring}{\mathfrak{o}}
\newcommand{\maximalideal}{\mathfrak{p}}
\newcommand{\residueField}{\mathfrak{f}}
\newcommand{\localField}{F}
\newcommand{\localFieldRepresentation}{\pi}
\newcommand{\quotientMap}{\nu}
\newcommand{\multiplicativeMeasure}[1]{{d^{\times}{#1}}}
\newcommand{\abs}[1]{\left|#1\right|}
\newcommand{\residueFieldUnipotentSubgroup}[1]{\UnipotentSubgroup_{#1} \left( \residueField \right)}
\newcommand{\residueFieldUnipotentSubgroupDoubleCoset}[2]{\residueFieldUnipotentSubgroup{#1} \antidiagPermutationMatrix{#2} \residueFieldUnipotentSubgroup{#1}}
\newcommand{\rquot}[2]{{#1}\slash{#2}}
\newcommand{\lquot}[2]{{#1}\backslash{#2}}
\newcommand{\grpIndex}[2]{\left[#1:#2\right]}
\newcommand{\grpGeneratedBy}[1]{\left\langle #1 \right\rangle}
\newcommand{\transpose}[1]{\, {}^{t}#1}
\newcommand{\inverseTranspose}[1]{#1^{\iota}}
\newcommand{\IdentityMatrix}[1]{I_{#1}}
\newcommand{\diag}{\mathrm{diag}}
\newcommand{\trace}{\mathrm{tr}}
\newcommand{\GL}[2]{\mathrm{GL}_{#1}\left(#2\right)}
\newcommand{\SquareMat}[2]{M_{#1}\left(#2\right)}
\newcommand{\Mat}[3]{M_{#1 \times #2}\left(#3\right)}
\newcommand{\standardColumnVector}[1]{e_{#1}}
\newcommand{\diagonalSubgroup}{A}
\newcommand{\smallDiagTwo}[2]{\left(\begin{smallmatrix}
		#1 & \\
		& #2
	\end{smallmatrix}\right)}
\newcommand{\diagTwo}[2]{\begin{pmatrix}
		#1 & \\
		& #2
\end{pmatrix}}
\newcommand{\smallAntiDiagTwo}[2]{\left(\begin{smallmatrix}
		& #1 \\
		#2 & 
	\end{smallmatrix}\right)}
\newcommand{\antiDiagTwo}[2]{\begin{pmatrix}
		& #1 \\
		#2 & 
\end{pmatrix}}
\title[Exterior square $\gamma$-factors for simple supercuspidal representations]{Exterior square gamma factors for cuspidal representations of $\mathrm{GL}_n$: simple supercuspidal representations}
\author{Rongqing Ye}
\address{Department of Mathematics, Purdue University, West Lafayette, IN 47907, USA}
\email{ye271@purdue.edu}
\author{Elad Zelingher}
\address{Department of Mathematics, University of Michigan, 1844 East Hall, 530 Church Street, Ann Arbor, MI 48109-1043 USA}
\email{eladz@umich.edu}
\begin{document}

\begin{abstract}
We compute the local twisted exterior square gamma factors for simple supercuspidal representations, using which we prove a local converse theorem for simple supercuspidal representations.
\end{abstract}

\maketitle

\section{Introduction}\label{sec:introduction}

A local conjecture of Jacquet for $\GL{n}{F}$, where $F$ is a local non-Archimedean field, asserts that the structure of an irreducible generic representation can be determined by a family of twisted Rankin-Selberg gamma factors. This conjecture was completely settled independently by Chai \cite{Chai19} and Jacquet-Liu \cite{JacquetLiu16}, using different methods:

\begin{thm}[Chai \cite{Chai19}, Jacquet-Liu \cite{JacquetLiu16}]
    Let $\pi_1$ and $\pi_2$ be irreducible generic representations of $\GL{n}{F}$ sharing the same central character. Suppose for any $1 \le r \le \left\lfloor \frac{n}{2} \right\rfloor$ and for any irreducible generic representation $\tau$ of $\GL{r}{F}$,
    $$\gamma(s, \pi_1 \times \tau, \psi) = \gamma(s, \pi_2 \times \tau, \psi).$$
    Then $\pi_1 \cong \pi_2$.
\end{thm}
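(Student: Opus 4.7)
My plan is to compare the Whittaker models of $\pi_1$ and $\pi_2$ and deduce, from the equality of gamma factors, that a single Whittaker function lies in both models. Since both representations are irreducible and generic with the same central character, the uniqueness of the Whittaker model will then force $\pi_1 \isomorphic \pi_2$.

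First, I would invoke the local functional equation of Jacquet--Piatetski-Shapiro--Shalika for Rankin--Selberg integrals: for $W_i \in \Whittaker(\pi_i, \psi)$, $W' \in \Whittaker(\tau, \psi^{-1})$, and $\Phi \in \Schwartz(F^n)$, one has
$$\Psi(1-s, \widetilde{W_i}, \widetilde{W'}, \hat{\Phi}) = \gamma(s, \pi_i \times \tau, \psi) \cdot \Psi(s, W_i, W', \Phi).$$
Equality of the gamma factors therefore turns the hypothesis into equalities of bilinear pairings built from the Whittaker functions of $\pi_1$ and $\pi_2$, valid for every irreducible generic $\tau$ of rank $r \le \lfloor n/2 \rfloor$.

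The next step is to convert these integral identities into pointwise identities of Whittaker functions on a Bruhat-cell-by-Bruhat-cell basis. The convenient device here is that of \emph{Howe vectors}: by choosing test vectors for $\pi_i$ and compatible $W'$, $\Phi$ that are highly concentrated at level $m$, the Rankin--Selberg integrals reduce to integrals of the \emph{partial Bessel function} $\besselFunction_{\pi_i, \psi}$ of $\pi_i$ over small pieces of Weyl cells. A careful induction on the Bruhat order of Weyl elements, together with a stability argument as $m$ grows, then aims to show that $\besselFunction_{\pi_1, \psi}(g) = \besselFunction_{\pi_2, \psi}(g)$ on every relevant Bruhat cell, which suffices to identify a common Whittaker function.

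The main obstacle is that twisting only by $\tau$ of rank up to $\lfloor n/2 \rfloor$, rather than up to $n-1$, is sharp and insufficient for the most naive induction: the available integrals do not a priori separate Whittaker values at arbitrary Weyl elements. Overcoming this requires combining two reductions. The equality of central characters identifies Bruhat cells that differ by the scalar matrix $\uniformizer \IdentityMatrix{n}$ and cuts the effective range of Weyl elements roughly in half; and a stabilizer-of-character argument in $\UnipotentSubgroup$ further reduces the comparison to Weyl elements whose associated composition of $n$ has all parts of size at most $\lfloor n/2 \rfloor$, which is precisely the range accessible to the allowed twists. With these reductions in place, the Weyl-element induction closes and produces the desired common Whittaker function, so that $\pi_1 \isomorphic \pi_2$.
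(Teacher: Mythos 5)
This theorem is not proved in the paper: it is quoted as background in the introduction and cited directly to Chai \cite{Chai19} and Jacquet--Liu \cite{JacquetLiu16}. There is therefore no ``paper's proof'' to compare against, and any evaluation of your argument has to be against the literature rather than against the text at hand.

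That said, the outline you give is recognizably in the spirit of Chai's proof rather than Jacquet--Liu's. Chai works with Howe vectors and the associated partial Bessel functions, deduces from the functional equation that certain Bessel integrals agree, and then runs a Bruhat-cell induction stabilized by letting the Howe level grow; Jacquet--Liu instead argue analytically, exploiting holomorphy and bounds for the Rankin--Selberg integrals together with a clever manipulation of the twisting data. Your sketch correctly names the main tools of the Chai route (functional equation, Howe vectors, partial Bessel functions, cell-by-cell induction with stability), but the passage explaining how one overcomes the ``$r\le\lfloor n/2\rfloor$ is too few twists'' obstruction is vague and in part misdescribed. The real mechanism in Chai's argument is not that the central character ``identifies Bruhat cells differing by $\uniformizer \IdentityMatrix{n}$ and cuts the range in half,'' nor a ``stabilizer-of-character argument'' forcing all parts of the composition to be $\le \lfloor n/2\rfloor$; it is a subtler symmetry relating a Weyl element to a companion one (using the contragredient functional equation and properties of partial Bessel functions established by Cogdell--Shahidi--Tsai), which lets one bootstrap information from the accessible cells to the rest. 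As written, the proposal would not close the induction: the step from ``integral identities for $r\le\lfloor n/2\rfloor$'' to ``Bessel functions agree on every cell'' is precisely where the two published proofs each needed a genuinely new idea, and your sketch names an incorrect mechanism for it. If you want to fill this in, you should work directly from Chai's treatment of ``special pairs'' of Weyl elements and the stability theorems for partial Bessel functions, rather than from the heuristic reductions you listed.
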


The bound $\left\lfloor \frac{n}{2} \right\rfloor$ for $r$ in the theorem can be shown to be sharp by constructing some pairs of generic representations. However, the sharpness of $\left\lfloor \frac{n}{2} \right\rfloor$ is not that obvious if we replace ``generic'' by ``unitarizable supercuspidal'' in the theorem. In the tame case, it is shown in \cite{AdrianLiuStevensTam18} that $\left\lfloor \frac{n}{2} \right\rfloor$ is indeed sharp for unitarizable supercuspidal representations of $\GL{n}{F}$ when $n$ is prime. For some certain families of supercuspidal representations, $\left\lfloor \frac{n}{2} \right\rfloor$ is no longer sharp and the $\GL{1}{F}$ twisted Rankin-Selberg gamma factors might be enough to determine the structures of representations within these families. Such a family of supercuspidal representations can be a family of simple supercuspidal representations, see \cite[Proposition 2.2]{BushnellHenniart14} and \cite[Remark 3.18]{adrian2016some}, and also be a family of level zero supercuspidal representations for certain $n$, see \cite[Section 4.6]{NienZhang18}.

In this paper, we consider another kind of local converse theorems of Ramakrishnan using twisted exterior power gamma factors from \cite{Ramakrishnan94}.

\begin{conj}[Ramakrishnan]\label{conj:converse_theorem_using_exterior_power}
    Let $\pi_1$ and $\pi_2$ be irreducible unitarizable supercuspidal representations of $\GL{n}{F}$ sharing the same central character. Suppose for any character $\chi$ of $\multiplicativegroup{F}$, we have
    $$\gamma(s, \pi_1 \times \chi, \psi) = \gamma(s, \pi_2 \times \chi, \psi),$$
    and
    $$\gamma(s, \pi_1, \wedge^j \otimes \chi, \psi) = \gamma(s, \pi_2, \wedge^j \otimes \chi, \psi),$$
    for any $2 \le j \le \left\lfloor \frac{n}{2} \right\rfloor$. Then $\pi_1 \cong \pi_2$.
\end{conj}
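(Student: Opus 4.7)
The plan is to transport Conjecture~\ref{conj:converse_theorem_using_exterior_power} across the local Langlands correspondence for $\GL{n}{F}$ and then exploit the relationship between exterior powers and characters at the level of Weil parameters. Writing $\pi_i \leftrightarrow \rho_i$, each $\rho_i$ is an irreducible $n$-dimensional representation of the Weil group $W_F$. Local Langlands identifies Rankin--Selberg gamma factors with their arithmetic counterparts (Henniart), and under the identification of the Jacquet--Shalika/Bump--Friedberg integral with the arithmetic gamma factor the twisted exterior power $\gamma$-factors match as well. Hence the hypothesis becomes that $\det \rho_1 \cong \det \rho_2$ together with
\[
\gamma\left(s, \wedge^j \rho_1 \otimes \chi, \psi\right) = \gamma\left(s, \wedge^j \rho_2 \otimes \chi, \psi\right)
\]
for every character $\chi$ of $\multiplicativegroup{F}$ and every $1 \le j \le \lfloor n/2 \rfloor$.

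The next step is character-theoretic. The duality $\wedge^{n-j} \rho \cong \wedge^j \Contragradient{\rho} \otimes \det \rho$ together with the equality of determinants extends the hypothesis to all $1 \le j \le n - 1$. In the Grothendieck group of $W_F$-representations the classes $[\wedge^j \rho_i]$ are the elementary symmetric functions in the ``virtual roots'' of $\rho_i$, so Newton's identities recover the power sums $\operatorname{tr} \rho_i^k$ for all $k$, hence the character of $\rho_i$ itself. Consequently it suffices to prove $\wedge^j \rho_1 \cong \wedge^j \rho_2$ as semisimple $W_F$-representations for each $1 \le j \le n - 1$; the character-theoretic argument then forces $\rho_1 \cong \rho_2$ and so $\pi_1 \cong \pi_2$.

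To recover each $\wedge^j \rho_i$ from its collection of twisted $\gamma$-factors, I would globalize: choose a number field $k$ with a place $v_0$ such that $k_{v_0} = F$, and cuspidal automorphic representations $\Pi_1, \Pi_2$ of $\mathrm{GL}_n(\mathbb{A}_k)$ with $\Pi_{i, v_0} = \pi_i$, arranged via Henniart--Shin-style globalization (with a prescribed auxiliary supercuspidal place) so that $\Pi_{1,v} \cong \Pi_{2,v}$ at every place $v \neq v_0$. The local hypothesis at $v_0$ then upgrades to equality of the complete global $L$- and $\epsilon$-factors of $\Pi_i$ and of $\wedge^j \Pi_i$ against every Hecke character. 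Granting the functorial exterior power lift from $\mathrm{GL}_n$ to $\mathrm{GL}_{\binom{n}{j}}$, strong multiplicity one for $\mathrm{GL}_{\binom{n}{j}}(\mathbb{A}_k)$ forces $\wedge^j \Pi_1 \cong \wedge^j \Pi_2$, and extracting local components at $v_0$ yields $\wedge^j \rho_1 \cong \wedge^j \rho_2$ for each $j$.

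The main obstacle is unambiguously the invocation of exterior power functoriality for $j \ge 3$, which at present is available only in very low rank (Kim's $\wedge^2$ lift on $\mathrm{GL}_4$) and is not established in the generality required. A purely local bypass would demand a refined Bessel-function theory for the Bump--Friedberg integrals that pins down the character of $\rho$ directly from the twisted $\gamma$-factor data; such analysis is currently only tractable in families where the ramification is explicit, which is why the body of this paper restricts to simple supercuspidal representations, where the defining data is small enough that the exterior square $\gamma$-factor can be computed by hand and matched directly. A realistic path towards the full conjecture would proceed family-by-family along the Bushnell--Kutzko types, treating depth-zero supercuspidals next using the exterior square $\gamma$-factor to extract the tame parameter, and only then engaging the higher exterior powers required when $n \ge 6$.
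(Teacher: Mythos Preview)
The statement is labeled Conjecture~\ref{conj:converse_theorem_using_exterior_power} in the paper and is \emph{not} proved there. The paragraph immediately following it points out that for $j \ge 3$ no analytic definition of $\gamma(s,\pi,\wedge^j\otimes\chi,\psi)$ is currently available, so the statement is only well-posed for $n\le 5$ (and perhaps $n=6$). The paper's actual contribution is the far more restricted Theorem~\ref{thm:main_thm}, established by explicit computation of Jacquet--Shalika integrals for simple supercuspidal representations; even there the conclusion in the even case is only $\zeta=\pm\zeta'$, and the Rankin--Selberg hypothesis is deliberately dropped because $\mathrm{GL}_1$ twists alone already distinguish simple supercuspidals.

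Your outline is a heuristic strategy rather than a proof, and you correctly isolate the decisive gap: the exterior power functorial transfer from $\mathrm{GL}_n$ to $\mathrm{GL}_{\binom{n}{j}}$ for $j\ge 3$ is not known. Two further points deserve scrutiny. First, identifying the Jacquet--Shalika $\gamma$-factor with the arithmetic $\gamma(s,\wedge^2\rho\otimes\chi,\psi)$ is itself a theorem requiring proof, and there is no analogue for $j\ge 3$ to invoke. Second, your globalization step, as written, proves too much: if one could really arrange cuspidal $\Pi_1,\Pi_2$ with $\Pi_{1,v}\cong\Pi_{2,v}$ for \emph{all} $v\ne v_0$, then strong multiplicity one on $\mathrm{GL}_n$ already forces $\Pi_1\cong\Pi_2$ and hence $\pi_1\cong\pi_2$, with no exterior power input whatsoever. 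The honest version of the argument must instead feed the local hypothesis at $v_0$ into the global functional equation for the (conjectural) $\wedge^j$ $L$-function, and it is precisely there that the missing functoriality, or an equivalent analytic theory of higher exterior power integrals, becomes unavoidable.
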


We note here that the condition on sharing the same central character is redundant, since if $\gamma(s, \pi_1 \times \chi, \psi) = \gamma(s, \pi_2 \times \chi, \psi)$ for all characters $\chi$, this guarantees that the representations have the same central character as in \cite[Corollary 2.7]{JiangNienStevens15}. We leave it in the statement of the conjecture as a general requirement for a local converse problem. In fact, in the formulation of the main result \Cref{thm:main_thm}, we will need to assume that the representations in consideration share the same central character.

When $j = 2$, the twisted exterior square gamma factors of irreducible supercuspidal representations of $\GL{n}{F}$ exist due to the work of Jacquet-Shalika \cite{jacquet11exterior} together with Matringe \cite{matringe2014linear} and Cogdell-Matringe \cite{CogdellMatringe15}, or the work of Shahidi \cite{Shahidi90} using the Langlands-Shahidi method. When $j = 3$, Ginzburg and Rallis \cite{GinzburgRallis00} found an integral representation for the automorphic $L$-function $L(s, \pi, \wedge^3 \otimes \chi)$  attached to an irreducible cuspidal automorphic representation $\pi$ of $\GL{6}{\mathbb{A}}$ and a character $\chi$ of $\GL{1}{\mathbb{A}}$ for some adelic ring $\mathbb{A}$. In general, for $j \ge 3$, we don't have an analytic definition for $\gamma(s, \pi, \wedge^j \otimes \chi, \psi)$. Therefore, \Cref{conj:converse_theorem_using_exterior_power} only makes sense for $n = 4, 5$ and possibly $6$ if one can prove local functional equations for the local integrals coming from \cite{GinzburgRallis00}.

Since we have only twisted exterior square gamma factors in general, we want to know which families of supercuspidal representations of $\GL{n}{F}$ satisfy \Cref{conj:converse_theorem_using_exterior_power} when $j = 2$. We show in the paper that \Cref{conj:converse_theorem_using_exterior_power} holds true for simple supercuspidal representations up to a sign as we will explain in the next paragraph. This result is our first step toward \Cref{conj:converse_theorem_using_exterior_power}. We have already seen that $\GL{1}{F}$ twists are enough to distinguish simple supercuspidal representations, see \cite[Proposition 2.2]{BushnellHenniart14} and \cite[Remark 3.18]{adrian2016some}. Thus, \Cref{conj:converse_theorem_using_exterior_power} for simple supercuspidal representations has no context if we still require $\GL{1}{F}$ twists. Therefore, we will drop the assumption on the $\GL{1}{F}$ twists.

Let $\mathfrak{o}$ be the ring of integers of $F$, and $\mathfrak{p} = (\varpi)$ is the maximal prime ideal in $\mathfrak{o}$ generated by a fixed uniformizer $\varpi$. By \cite{knightly2015simple}, if we fix a tamely ramified central character $\omega$, i.e., $\omega$ is trivial on $1+\mathfrak{p}$, there are exactly $n(q-1)$ isomorphism classes of irreducible simple supercuspidal representations of $\GL{n}{F}$, each of which corresponds to a pair $(t_0, \zeta)$, where $t_0 \in \multiplicativegroup{\mathfrak{f}}$ is a non-zero element in the residue field $\mathfrak{f}$ of $F$ and $\zeta$ is an $n$-th root of $\omega(t^{-1}\varpi)$, where $t$ is a lift of $t_0$ to $\multiplicativegroup{\mathfrak{o}}$. The main theorem of the paper is the following:

\begin{thm}\label{thm:main_thm}
    Let $\localFieldRepresentation$ and $\localFieldRepresentation'$ be irreducible simple supercuspidal representations of $\GL{n}{F}$ sharing the same central character $\omega$, such that $\pi$, $\pi'
    $ are associated with the data $(t_0, \rootOfUniformizer)$, $(t_0', \rootOfUniformizer')$ respectively. Assume that
    \begin{enumerate}
        \item $\gcd \left(m-1, q-1\right)=1$ if $n = 2m$,
        \item or $\gcd \left(m, q-1\right)=1$ if $n = 2m+1$.
    \end{enumerate}
    Suppose for every unitary tamely ramified character $\mu$ of $\multiplicativegroup{F}$, we have
    $$\gamma(s, \localFieldRepresentation, \wedge^2 \otimes \mu, \psi) = \gamma(s, \localFieldRepresentation', \wedge^2 \otimes \mu, \psi).$$
    Then $t_0 = t_0'$ and $\rootOfUniformizer = \pm \rootOfUniformizer'$. Moreover, we have $\rootOfUniformizer = \rootOfUniformizer'$ if $n = 2m + 1$ is odd.
\end{thm}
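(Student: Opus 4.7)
The plan is to derive an explicit formula for the twisted exterior square gamma factor $\gamma(s, \pi, \wedge^2 \otimes \mu, \psi)$ as an explicit function of $(t_0, \zeta, \mu)$, and then to invert this relation by letting $\mu$ range over all unitary tamely ramified characters of $F^\times$.

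First, I would realize $\pi$ as a compact induction of the affine generic character $\chi_\zeta$ from the compact-mod-center subgroup $H = Z \cdot I^+$, and use this model to write down an explicit Whittaker function $W_\pi$. The Iwahori decomposition should force $W_\pi$ to have extremely restricted support on the diagonal torus: its values there are simple monomials in $\zeta$ multiplied by evaluations of the residual additive character $a \mapsto \psi(t_0 a)$, and the support is concentrated on one orbit under the affine Weyl translation attached to $(t_0, \zeta)$.

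Second, I would substitute $W_\pi$ into the Jacquet--Shalika integral defining $\gamma(s, \pi, \wedge^2 \otimes \mu, \psi)$, using the functional equation in the form established by Matringe and Cogdell--Matringe. Decomposing the integration over the Shalika subgroup along the Bruhat decomposition and applying the support constraints on $W_\pi$, the integral should collapse to a finite expression; I expect it to factor as a Gauss sum for a character of $\mathfrak{f}^\times$ built from $\mu$ and a fixed power of $t_0$, multiplied by a monomial of the form $c \cdot \zeta^{k} \mu(\varpi)^{\ell} q^{-\ell s}$, where the exponents $k$ and $\ell$ are determined by the Weyl element that appears in the functional equation and depend on the parity of $n$.

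Third, I would compare the resulting formulas for $\pi$ and $\pi'$. The $q^{-s}$-dependence isolates the monomial part from the Gauss sum, and varying $\mu$ lets one use Fourier inversion on $\mathfrak{f}^\times$ to recover a character of the form $a \mapsto \psi(t_0^{j} a)$ with $j = m-1$ in the even case and $j = m$ in the odd case; the gcd hypotheses then force $t_0 = t_0'$ by inverting the power map on $\mathfrak{f}^\times$. With $t_0 = t_0'$ in hand, the residual equation $\zeta^k = (\zeta')^k$ combined with $\zeta^n = (\zeta')^n = \omega(t^{-1}\varpi)$ makes $\zeta/\zeta'$ a $\gcd(k,n)$-th root of unity, and the explicit value of $k$ gives $\zeta = \pm\zeta'$ in the even case and $\zeta = \zeta'$ in the odd case.

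The main obstacle is step two. Because the Whittaker function of a simple supercuspidal is very ramified, the Jacquet--Shalika integral must be analyzed carefully across Iwahori cosets and Weyl translates, keeping track of the powers of $\zeta$ and the precise argument of $\psi$ produced by $\chi_\zeta$ at each step, and verifying that the integrand vanishes outside a single orbit so that the \emph{a priori} infinite series truncates to a single Gauss sum. The arithmetic of the exponents $(k,\ell)$ obtained in this analysis is exactly what dictates the gcd hypotheses appearing in the statement of the theorem.
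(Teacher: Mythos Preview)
Your overall strategy---compute the gamma factor explicitly from an explicit Whittaker function, then compare---is exactly what the paper does. However, your step two anticipates the wrong shape for the answer, and as a result your step three would not go through as written.

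The actual formulas (Theorems~3.5 and~3.6 in the paper) are as follows. In the odd case $n=2m+1$ there is no Gauss sum at all: the gamma factor is the pure monomial $\bigl(\mu(t^{-1}\varpi)\,\zeta^{2}\,q^{-(s-\frac12)}\bigr)^{m}$. In the even case $n=2m$ one obtains $\bigl(\xi\,q^{-(s-\frac12)}\bigr)^{m-1}\gamma(s,(\omega\mu^m)_{\xi},\psi)$ with $\xi=\zeta^{2}\mu\bigl((-1)^{m-1}t^{-1}\varpi\bigr)$; the Gauss sum that appears is $\sum_{\lambda\in\mathfrak{f}^\times}\psi(\lambda)\,\omega(\lambda^{-1})\mu(\lambda^{-m})$, which depends on $\omega$ and $\mu$ but \emph{not} on $t_0$. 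So $t_0$ never enters through an additive character $a\mapsto\psi(t_0^{j}a)$, and there is nothing to Fourier-invert on $\mathfrak{f}^\times$ in the way you describe; the entire $t_0$-dependence sits in the monomial via the factor $\mu(t^{-1})$.

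Consequently the comparison step works differently, and this is also where the gcd hypothesis is actually used. Equating gamma factors yields only $(\xi/\xi')^{m-1}=1$ in the even case (respectively $(\zeta^2/\zeta'^2)^{m}=\mu(t'^{-1}t)^m$ in the odd case), because the Gauss sum---being independent of $t_0,\zeta$---cancels. One cannot yet assume $\zeta^{n}=\zeta'^{n}$, since these equal $\omega(t^{-1}\varpi)$ and $\omega(t'^{-1}\varpi)$ respectively. What one does have is $(\xi/\xi')^{m(q-1)}=1$, coming from $\omega$ and $\mu$ being tamely ramified; the hypothesis $\gcd(m-1,q-1)=1$ then gives $\gcd(m-1,m(q-1))=1$ and hence $\xi=\xi'$. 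Now taking $\mu$ trivial gives $\zeta^{2}=\zeta'^{2}$, and then the identity $\xi=\xi'$ reads $\mu(t't^{-1})=1$ for all tamely ramified $\mu$, forcing $t_0=t_0'$. The gcd condition is thus used to pass from an $(m{-}1)$st-power (resp.\ $m$th-power) identity to a first-power identity for the monomial part, not to invert a power map on $t_0$.
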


In the case $n = 2m$, we can only show $\rootOfUniformizer$ and $\rootOfUniformizer'$ are equal up to a sign. That is what we mean by saying that \Cref{conj:converse_theorem_using_exterior_power} holds true for simple supercuspidal representations up to a sign. \Cref{thm:main_thm}, as far as we know, is the first result toward \Cref{conj:converse_theorem_using_exterior_power} of Ramakrishnan.

In \Cref{sec:preliminaries}, we recall the definitions of the twisted exterior square gamma factors following \cite{jacquet11exterior,matringe2014linear,CogdellMatringe15}. We then recall some results on simple supercuspidal representations in \cite{knightly2015simple}. More importantly from \cite[Section 3.3]{adrian2016some}, we have explicit Whittaker functions for such simple supercuspidal representations. Using these explicit Whittaker functions, we compute in \Cref{sec:computation} the twisted exterior square gamma factors. Finally in \Cref{section:local-converse-theorem}, we prove our main theorem, \Cref{thm:main_thm}.

\section{Preliminaries and notation}\label{sec:preliminaries}

\subsection{Notation}\label{subsubsection:notations}

Let $\localField$ be a non-archimedean local field. We denote by $\integersring$ its ring of integers, by $\maximalideal$ the unique prime ideal of $\integersring$, by $\residueField = \rquot{\integersring}{\maximalideal}$ its residue field. Denote $q = \sizeof{\residueField}$.

Let $\quotientMap : \integersring \rightarrow \residueField$ be the quotient map. We continue denoting by $\quotientMap$ the maps that $\quotientMap$ induces on various groups, for example $\integersring^m \rightarrow \residueField^m$,  $\SquareMat{m}{\integersring} \rightarrow \SquareMat{m}{\residueField}$, $\GL{m}{\integersring} \rightarrow \GL{m}{\residueField}$ etc.

Let $\uniformizer$ be a uniformizer (a generator of $\maximalideal$). We denote by $\abs{\cdot}$, the absolute value on $\localField$, normalized such that $\abs{\uniformizer} = \frac{1}{q}$.

Let $\fieldCharacter : \localField \rightarrow \multiplicativegroup{\cComplex}$ be a non-trivial additive character with conductor $\maximalideal$, i.e. $\fieldCharacter$ is trivial on $\maximalideal$, but not on $\integersring$.

\subsection{The twisted Jacquet-Shalika integral}

In this section, we define twisted versions of the Jacquet-Shalika integrals, and discuss the functional equations that they satisfy. This will allow us to define the twisted exterior square gamma factor $\twistedGammaFactorOfLocalField{s}$ for a generic representation $\representationDeclaration{\localFieldRepresentation}$ of $\GL{n}{\localField}$ and a unitary character $\unitaryDetCharacter : \multiplicativegroup{\localField} \rightarrow \multiplicativegroup{\cComplex}$. We will need this for our local converse theorem in \Cref{section:local-converse-theorem}.

We denote $\UnipotentSubgroup_m$ the upper unipotent subgroup of $\GL{m}{\localField}$, $\diagonalSubgroup_m$ the diagonal subgroup of $\GL{m}{\localField}$, $\maximalCompactSubgroup_m = \GL{m}{\integersring}$, $\UpperTriangularAdditive_m$ the upper triangular matrix subspace of $\SquareMat{m}{\localField}$, and  $\NilpotentLowerTriangular_m$ the lower triangular nilpotent matrix subspace of $\SquareMat{m}{\localField}$. We have $\lquot{\UpperTriangularAdditive_m}{\SquareMat{m}{\localField}} \isomorphic \NilpotentLowerTriangular_m$.

For the following pairs of groups $A \le B$, we normalize the Haar measure on $B$ so that the compact open subgroup $A$ has measure one: $\integersring \le \localField$, $\multiplicativegroup{\integersring} \le \multiplicativegroup{F}$, $\maximalCompactSubgroup_m = \GL{m}{\integersring} \le \GL{m}{\localField}$, $\NilpotentLowerTriangular_m\left(\integersring\right) \le \NilpotentLowerTriangular_m\left( \localField \right)$.

Recall the Iwasawa decomposition: $\GL{m}{\localField} = \UnipotentSubgroup_m \diagonalSubgroup_m \maximalCompactSubgroup_m$. It follows from this decomposition that for an integrable function $f : \lquot{\UnipotentSubgroup_m}{\GL{m}{\localField}} \rightarrow \cComplex$ we have $$\int_{\lquot{\UnipotentSubgroup_m}{\GL{m}{\localField}}} f\left(g\right) \multiplicativeMeasure{g} = \int_{\maximalCompactSubgroup_m} \int_{\diagonalSubgroup_m} f\left(ak\right) \rightHaarMeasureModulus{\borelSubgroup_m}\left(a\right) \multiplicativeMeasure{a} \multiplicativeMeasure{k},$$
where $\borelSubgroup_m \le \GL{m}{\localField}$ is the Borel subgroup, and for $a = \diag\left(a_1,\dots,a_m\right)$, $\rightHaarMeasureModulus{\borelSubgroup_m}\left(a\right) = \prod_{1 \le i < j \le m} \abs{\frac{a_j}{a_i}}$ is the Haar measure module character.

Let $\representationDeclaration{\localFieldRepresentation}$ be an irreducible generic representation of $\GL{n}{F}$, and denote its Whittaker model with respect to $\fieldCharacter$ by $\WhittakerModelOfLocalFieldRepresentation$. Let $\unitaryDetCharacter : \multiplicativegroup{\localField} \rightarrow \multiplicativegroup{\cComplex}$ be a unitary character. We now define the twisted Jacquet-Shalika integrals and their duals. These initially should be thought as formal integrals. We discuss their convergence domains later and explain how to interpret them for arbitrary $s \in \cComplex$.

We have a map $\WhittakerModelOfLocalFieldRepresentation \rightarrow \Whittaker \left( \Contragradient{\localFieldRepresentation}, \fieldCharacter^{-1} \right)$, denoted by $W \mapsto \Contragradient{W}$, where $\Contragradient{\localFieldRepresentation}$ is the contragredient representation, and $\Contragradient{W}$ is given by $\Contragradient{W} \left(g\right) = W \left( \weylElement{n} \inverseTranspose{g} \right)$, where $\weylElement{n} = \left(\begin{smallmatrix}
& & 1\\
& \iddots &\\
1
\end{smallmatrix}\right)$ and $\inverseTranspose{g} = \transpose{g}^{-1}$.

Denote by $\Schwartz\left(\localField^m\right)$ the space of Schwartz functions $\phi : \localField^m \rightarrow \cComplex$, that is the space of locally constant functions with compact support.

Suppose $n = 2m$. We define for $s \in \cComplex$, $W \in \WhittakerModelOfLocalFieldRepresentation$, $\phi \in \Schwartz \left(\localField^m \right)$
\begin{align*}
	\TwistedJSOfLocalFieldRepresentation{s}{W}{\phi} =  \int_{\lquot{\UnipotentSubgroup_m}{\GL{m}{\localField}}} \int_{\lquot{\UpperTriangularAdditive_m}{\SquareMat{m}{\localField}}} & W \left( \evenPermutationMatrix \ShalikaUnipotentElement{X} \ShalikaDiagonalElement{g} \right) \fieldCharacter \left(- \trace X \right) \\
	& \cdot \abs{\det g}^s \unitaryDetCharacter \left( \det g \right) \phi \left( \lastrowvector g \right) dX \multiplicativeMeasure{g},
\end{align*}
where $\lastrowvector = \rowvector{m} = \begin{pmatrix}
0 & \dots & 0 & 1
\end{pmatrix} \in \localField^m$, and $\evenPermutationMatrix$ is the column permutation matrix corresponding to the permutation $$ \begin{pmatrix}
1 & 2 & \dots & m & \mid & m + 1 & m + 2 & \dots & 2m\\
1 & 3 & \dots & 2m - 1 & \mid & 2 & 4 & \dots & 2m
\end{pmatrix},$$ i.e., $$\evenPermutationMatrix = \begin{pmatrix}
\standardColumnVector{1} & \standardColumnVector{3} & \dots & \standardColumnVector{2m-1} & \standardColumnVector{2} & \standardColumnVector{4} & \dots & \standardColumnVector{2m}
\end{pmatrix},$$
where $\standardColumnVector{i}$ is the $i$-th standard column vector, for $1 \le i \le 2m$. 
In this case, we define the dual Jacquet-Shalika as $$\TwistedDualJSOfLocalFieldRepresentation{s}{W}{\phi} = \TwistedJS{\Contragradient{\localFieldRepresentation}}\left(1-s, \Contragradient{\localFieldRepresentation} \begin{pmatrix}
& \IdentityMatrix{m}\\
\IdentityMatrix{m} &
\end{pmatrix} \Contragradient{W}, \FourierTransformWithRespectToCharacter{\phi}{\fieldCharacter}, {\unitaryDetCharacter^{-1}}, {\fieldCharacter^{-1}} \right),$$ where $J$ on the right hand side is the Jacquet-Shalika integral for $\Contragradient{\pi}$, and $$\FourierTransformWithRespectToCharacter{\phi}{\fieldCharacter} \left(y\right) = q^{\frac{m}{2}} \int_{\localField^m} \phi \left( x \right) \fieldCharacter \left( \standardForm{x}{y} \right) dx$$ is the Fourier transform, normalized such that it is self dual (here $\standardForm{\cdot}{\cdot}$ is the standard bilinear form on $\localField^m$).

Suppose $n = 2m + 1$. We define for $s \in \cComplex$, $W \in \WhittakerModelOfLocalFieldRepresentation$, $\phi \in \Schwartz \left(\localField^m \right)$

\begin{align*}
	\TwistedJSOfLocalFieldRepresentation{s}{W}{\phi}= & \int_{\lquot{\UnipotentSubgroup_m}{\GL{m}{\localField}}} \int_{\Mat{1}{m}{\localField}} \int_{\lquot{\UpperTriangularAdditive_m}{\SquareMat{m}{\localField}}} \\
	& \cdot W \left( \oddPermutationMatrix \ShalikaUnipotentElementOdd{X} \ShalikaDiagonalElementOdd{g} \ShalikaLowerUnipotentElementOdd{Z} \right) \\
	& \cdot \fieldCharacter \left(- \trace X \right) \abs{\det g}^{s - 1} \unitaryDetCharacter \left( \det g \right) \phi \left( Z \right) dX dZ \multiplicativeMeasure{g},
\end{align*}
where $\oddPermutationMatrix$ is the column permutation matrix corresponding to the permutation $$ \begin{pmatrix}
1 & 2 & \dots & m & \mid & m + 1 & m + 2 & \dots & 2m & \mid & 2m+1 \\
1 & 3 & \dots & 2m - 1 & \mid & 2 & 4 & \dots & 2m & \mid & 2m+1
\end{pmatrix},$$
i.e. $$\oddPermutationMatrix = \begin{pmatrix}
\standardColumnVector{1} & \standardColumnVector{3} & \dots & \standardColumnVector{2m-1} & \standardColumnVector{2} & \standardColumnVector{4} & \dots & \standardColumnVector{2m} & \standardColumnVector{2m+1}
\end{pmatrix}.$$
In this case, we define the dual Jacquet-Shalika as $$\TwistedDualJSOfLocalFieldRepresentation{s}{W}{\phi} = \TwistedJS{\Contragradient{\localFieldRepresentation}}\left(1-s, \Contragradient{\localFieldRepresentation} \begin{pmatrix}
& \IdentityMatrix{m} &\\
\IdentityMatrix{m} & & \\
& & 1
\end{pmatrix} \Contragradient{W}, \FourierTransformWithRespectToCharacter{\phi}{\fieldCharacter}, {\unitaryDetCharacter^{-1}}, {\fieldCharacter^{-1}} \right).$$

In both cases, we denote $\JSOfLocalFieldRepresentation{s}{W}{\phi} = \TwistedJS{\localFieldRepresentation}\left(s,W,\phi,{1},{\fieldCharacter}\right)$ and $\DualJSOfLocalFieldRepresentation{s}{W}{\phi} = \TwistedDualJS{\localFieldRepresentation}\left(s,W,\phi,{1},{\fieldCharacter}\right)$, where $1$ denotes the trivial character $\multiplicativegroup{\localField} \rightarrow \multiplicativegroup{\cComplex}$.

The definitions of the twisted Jacquet-Shalika integrals are motivated from \cite{jacquet11exterior,matringe2014linear,CogdellMatringe15}. We now list properties of the twisted Jacquet-Shalika integrals, some of which are only proven for the (untwisted) Jacquet-Shalika integrals in the literature.

From now and on suppose $n = 2m$ or $n = 2m + 1$.

\begin{thm}[{\cite[Section 7, Proposition 1, Section 9, Proposition 3]{jacquet11exterior}}]
	There exists $\realPartHalfRightPlaneLocalFieldRepresentation \in \rReal$, such that for every $s \in \cComplex$ with $\RealPart \left(s\right) > \realPartHalfRightPlaneLocalFieldRepresentation$, $W \in \WhittakerModelOfLocalFieldRepresentation$ and $\phi \in \Schwartz \left( \localField^m \right)$, the integral $\TwistedJSOfLocalFieldRepresentation{s}{W}{\phi}$ converges absolutely.
\end{thm}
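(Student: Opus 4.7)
The plan is to reduce the absolute convergence of the twisted integral to the untwisted case already treated by Jacquet--Shalika, exploiting that $\unitaryDetCharacter$ is unitary.

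First I would observe that since $\unitaryDetCharacter : \multiplicativegroup{\localField} \rightarrow \multiplicativegroup{\cComplex}$ is unitary, $\abs{\unitaryDetCharacter(\det g)} = 1$ for every $g \in \GL{m}{\localField}$. Because also $\abs{\fieldCharacter(-\trace X)} = 1$ for every $X$, and $\abs{\abs{\det g}^s} = \abs{\det g}^{\RealPart(s)}$, the pointwise absolute value of the integrand defining $\TwistedJSOfLocalFieldRepresentation{s}{W}{\phi}$ coincides with the absolute value of the integrand defining the untwisted Jacquet--Shalika integral at the real parameter $\RealPart(s)$. The same observation applies in the odd case $n = 2m+1$: the additional factor $\phi(Z)$ for $Z \in \Mat{1}{m}{\localField}$ is unaffected by the twist, and $\abs{\unitaryDetCharacter(\det g)} = 1$ removes the twist from the absolute value. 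Consequently, absolute convergence of $\TwistedJSOfLocalFieldRepresentation{s}{W}{\phi}$ at $s$ is equivalent to absolute convergence of $\JSOfLocalFieldRepresentation{s}{W}{\phi}$ at the same $s$.

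The cited result of Jacquet--Shalika (\cite[Section 7, Proposition 1]{jacquet11exterior} in the even case and \cite[Section 9, Proposition 3]{jacquet11exterior} in the odd case) furnishes a real number $\realPartHalfRightPlaneLocalFieldRepresentation$, depending only on $\localFieldRepresentation$, such that $\JSOfLocalFieldRepresentation{s}{W}{\phi}$ converges absolutely whenever $\RealPart(s) > \realPartHalfRightPlaneLocalFieldRepresentation$. Combined with the previous step, this yields absolute convergence of $\TwistedJSOfLocalFieldRepresentation{s}{W}{\phi}$ in the same right half-plane, with the bound $\realPartHalfRightPlaneLocalFieldRepresentation$ independent of the unitary twist $\unitaryDetCharacter$. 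No substantive obstacle arises here: the genuine analytic work---bounding Whittaker functions along the torus via Jacquet's gauge estimates and justifying the interchange of the Iwasawa integral with the integration over $\lquot{\UpperTriangularAdditive_m}{\SquareMat{m}{\localField}}$---is entirely packaged into the Jacquet--Shalika result, and the unitary twist affects only the phase, not the size, of the integrand.
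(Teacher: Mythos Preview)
Your proposal is correct and matches the paper's own treatment: the paper does not give a proof but simply cites Jacquet--Shalika for the untwisted case and remarks that the twisted versions require only slight modifications, which is precisely your observation that a unitary $\unitaryDetCharacter$ contributes nothing to the absolute value of the integrand.
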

Similarly, there exists a half left plane $\RealPart\left(s\right) < \realPartHalfRightPlaneLocalFieldDualRepresentation$ (where $\realPartHalfRightPlaneLocalFieldDualRepresentation = 1 - \realPartHalfRightPlaneLocalFieldRepresentation$), in which the dual twisted Jacquet-Shalika integrals $\TwistedDualJSOfLocalFieldRepresentation{s}{W}{\phi}$ converge absolutely for every $W, \phi$.

\begin{thm}[{\cite[Theorem 2.3]{jo2018derivatives}}, {\cite[Lemma 3.1]{CogdellMatringe15}}]
	For fixed $W \in \WhittakerModelOfLocalFieldRepresentation$, $\phi \in \Schwartz \left( \localField^m \right)$. the map $s \mapsto \TwistedJSOfLocalFieldRepresentation{s}{W}{\phi}$ for $s \in \cComplex$ with $\RealPart\left(s\right) > \realPartHalfRightPlaneLocalFieldRepresentation$ results in an element of $\cComplex \left(q^{-s}\right)$, that is a rational function in the variable $q^{-s}$, and therefore has a meromorphic continuation to the entire plane, which we continue to denote as $\TwistedJSOfLocalFieldRepresentation{s}{W}{\phi}$. Similarly, we continue to denote the meromorphic continuation of $\TwistedDualJSOfLocalFieldRepresentation{s}{W}{\phi}$ by the same symbol. Furthermore, denote $$ I_{\localFieldRepresentation, \fieldCharacter, \unitaryDetCharacter} = \Span_\cComplex \left\{ \TwistedJSOfLocalFieldRepresentation{s}{W}{\phi} \mid W \in \WhittakerModelOfLocalFieldRepresentation, \phi \in \Schwartz \left( \localField^m \right) \right\},$$ then there exists a unique element $p\left(Z\right) \in \cComplex \left[Z\right]$, such that $p \left(0\right) = 1$ and $I_{\localFieldRepresentation, \fieldCharacter, \unitaryDetCharacter} = \frac{1}{p \left( q^{-s} \right)} \cComplex \left[q^{-s}, q^{s}\right]$. $p \left(Z\right)$ does not depend on $\fieldCharacter$, and we denote $ \TwistedExteriorSquareLFunction{s}{\localFieldRepresentation}{\unitaryDetCharacter} = \frac{1}{p \left(q^{-s}\right)}$.
\end{thm}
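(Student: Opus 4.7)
The plan is to combine the Iwasawa decomposition with the asymptotic expansion of Whittaker functions on the torus to prove rationality in $q^{-s}$, and then to extract the fractional ideal structure from the translation action of $\cComplex[q^{-s}, q^s]$ on the space of local integrals.

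I would treat the even case $n = 2m$; the odd case proceeds identically after first integrating out the $Z$-variable (which produces a Fourier transform of $\phi$ that can be absorbed into the Schwartz data). Using the Iwasawa decomposition $\GL{m}{\localField} = \UnipotentSubgroup_m \diagonalSubgroup_m \maximalCompactSubgroup_m$, the outer $g$-integration splits into a compact integration over $k \in \maximalCompactSubgroup_m$ and a torus integration over $a \in \diagonalSubgroup_m$. The $k$-integration can be absorbed into a finite sum of modified Whittaker functions paired with translated Schwartz functions. Invoking the asymptotic expansion of Whittaker functions along the diagonal torus, the restriction of $W$ through the Shalika embedding decomposes, on a sufficiently deep positive cone, as a finite sum of quasi-characters of $\diagonalSubgroup_m$ times locally constant functions of compact support modulo the maximal compact torus. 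After performing the torus integration, each piece yields a shifted geometric series in $q^{\pm s}$, assembling to a rational function in $q^{-s}$. The same method handles $\TwistedDualJSOfLocalFieldRepresentation{s}{W}{\phi}$ in the corresponding left half plane.

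For the fractional ideal structure, I would verify that $I_{\localFieldRepresentation, \fieldCharacter, \unitaryDetCharacter}$ is stable under multiplication by $q^{\pm s}$: replacing $\phi$ by a dilate $\phi(\uniformizer \cdot)$ combined with a change of variables $g \mapsto \uniformizer^{-1} g$ in the outer integral multiplies the integral by an explicit monomial in $q^{\pm s}$. Since $\cComplex[q^{-s}, q^s]$ is a PID and the span $I_{\localFieldRepresentation, \fieldCharacter, \unitaryDetCharacter}$ is a nonzero fractional ideal whose denominators are bounded (by the rationality argument of the previous step), it is principal, generated by a unique element of the form $1 / p(q^{-s})$ with $p(0) = 1$. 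For the independence of $p$ on $\fieldCharacter$, replacing $\fieldCharacter$ by $\fieldCharacter^a(x) = \fieldCharacter(ax)$ with $a \in \multiplicativegroup{\integersring}$ transforms the Whittaker model by conjugation with a diagonal element, which is absorbed by a matching change of variables in $g$ (together with a rescaling of the $X$-integration), so the resulting fractional ideals coincide.

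The main obstacle is the asymptotic expansion step: standard Casselman-Shalika asymptotics describe $W$ on the diagonal torus of $\GL{n}{\localField}$, whereas the integrand requires controlling $W$ evaluated at the specific product of the permutation $\evenPermutationMatrix$, a Shalika unipotent, and the Shalika diagonal embedding. Transferring the exponent estimates to this coupled setting is the technical content of \cite[Lemma 3.1]{CogdellMatringe15} and \cite[Theorems 2.16, 2.19]{jo2018derivatives}, where the Bernstein-Zelevinsky derivative filtration on the Kirillov model is used to reduce the analysis of $W$ along the Shalika torus to standard exponent estimates.
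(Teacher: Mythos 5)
The paper does not give its own proof of this theorem; it is quoted from Jo and from Cogdell--Matringe, so your sketch is being compared against the arguments in those references rather than against anything in this paper. Your outline captures the correct overall strategy: Iwasawa decomposition, Whittaker asymptotics along the Shalika torus (with the Bernstein--Zelevinsky derivative filtration supplying the exponent control), rationality via geometric series, and then the fractional-ideal structure over $\cComplex\left[q^{-s},q^{s}\right]$. You are also right to flag the transfer of exponent estimates to the coupled Shalika setting as the genuinely technical step and to point to the cited lemmas for it.

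There is, however, one concrete gap in the fractional-ideal step. You argue that $I_{\localFieldRepresentation,\fieldCharacter,\unitaryDetCharacter}$ is stable under multiplication by $q^{\pm s}$ by replacing $\phi$ with $\phi\left(\uniformizer\,\cdot\right)$ and substituting $g\mapsto\uniformizer^{-1}g$. Carrying this out in the even integral, the substitution contributes $\abs{\det\left(\uniformizer^{-1}g\right)}^{s}=q^{ms}\abs{\det g}^{s}$, a factor $\unitaryDetCharacter\left(\uniformizer\right)^{-m}$, and a central-character factor $\centralCharacter{\localFieldRepresentation}\left(\uniformizer\right)^{-1}$ from $W$; in total one obtains a factor of $q^{ms}$ times a nonzero constant, not $q^{s}$. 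Stability under $q^{\pm ms}$ does not by itself make $I_{\localFieldRepresentation,\fieldCharacter,\unitaryDetCharacter}$ a $\cComplex\left[q^{-s},q^{s}\right]$-module, so the conclusion that it is a principal fractional ideal with a generator $1/p\left(q^{-s}\right)$, $p\left(0\right)=1$, is not yet justified. The fix is to use a non-scalar dilation: take $d=\diag\left(1,\dots,1,\uniformizer\right)\in\GL{m}{\localField}$, replace $\phi$ by $\phi\left(\cdot\,d\right)$, and substitute $g\mapsto gd^{-1}$. This yields
\begin{equation*}
\TwistedJSOfLocalFieldRepresentation{s}{W}{\phi\left(\cdot\,d\right)}=q^{s}\,\unitaryDetCharacter\left(\uniformizer\right)^{-1}\,\TwistedJSOfLocalFieldRepresentation{s}{\localFieldRepresentation\left(\SmallShalikaDiagonalElement{d^{-1}}\right)W}{\phi},
\end{equation*}
and since the Whittaker model is stable under right translation by $\SmallShalikaDiagonalElement{d^{-1}}$, this gives $q^{s}\,I_{\localFieldRepresentation,\fieldCharacter,\unitaryDetCharacter}\subseteq I_{\localFieldRepresentation,\fieldCharacter,\unitaryDetCharacter}$; the inverse $d$ gives the $q^{-s}$ direction. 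With that replacement, the rest of your fractional-ideal and $\fieldCharacter$-independence arguments go through (the $\fieldCharacter$-change conjugation only multiplies the span by a unit of $\cComplex\left[q^{-s},q^{s}\right]$, so the normalized generator $p$ is unchanged), and the odd case can indeed be handled by the same scheme after the extra $Z$-integration is accounted for.
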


\begin{prop}[{\cite[Proposition 3.2]{ye2018exterior}}]\label{prop:js-dual-formulas}
	\begin{enumerate}
		\item For $n = 2m$,
		\begin{align*}
			\TwistedDualJSOfLocalFieldRepresentation{s}{W}{\phi} = \int_{\lquot{\UnipotentSubgroup_m}{\GL{m}{\localField}}} \int_{\lquot{\UpperTriangularAdditive_m}{\SquareMat{m}{\localField}}}  & W \left( \evenPermutationMatrix \ShalikaUnipotentElement{X} \ShalikaDiagonalElement{g} \right) \fieldCharacter \left(- \trace X \right) \\
			& \cdot \abs{\det g}^{s-1} \unitaryDetCharacter \left( \det g \right) \FourierTransformWithRespectToCharacter{\phi}{\fieldCharacter} \left( \firstrowvector \inverseTranspose{g} \right) dX \multiplicativeMeasure{g},
		\end{align*}
		where $\firstrowvector = \begin{pmatrix}
		1 & 0 & \dots & 0
		\end{pmatrix}$.
		\item  For $n = 2m + 1$, \begin{align*}
			\TwistedDualJSOfLocalFieldRepresentation{s}{W}{\phi} =
			&\int_{\lquot{\UnipotentSubgroup_m}{\GL{m}{\localField}}} \int_{\Mat{1}{m}{\localField}} \int_{\lquot{\UpperTriangularAdditive_m}{\SquareMat{m}{\localField}}} \\
			& \cdot W \left( \antiDiagTwo{1}{\IdentityMatrix{2m}} \oddPermutationMatrix \ShalikaUnipotentElementOdd{X} \ShalikaDiagonalElementOdd{g} \ShalikaUpperRightUnipotentElementOdd{-\transpose{Z}} \right)  \\
			&\cdot \fieldCharacter \left(- \trace X \right) \abs{\det g}^{s} \unitaryDetCharacter \left( \det g \right) \FourierTransformWithRespectToCharacter{\phi}{\fieldCharacter} \left( Z \right) dX dZ \multiplicativeMeasure{g},
		\end{align*}
	\end{enumerate}
\end{prop}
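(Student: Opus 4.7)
The proof is a direct twisted extension of \cite[Proposition 3.2]{ye2018exterior}, which handles the untwisted case $\unitaryDetCharacter = 1$. The strategy is to unfold the definition of $\TwistedDualJSOfLocalFieldRepresentation{s}{W}{\phi}$, substitute the relation $\Contragradient{W}(h) = W(\weylElement{n} \inverseTranspose{h})$ to re-express everything in terms of $W$, and then perform explicit matrix manipulations together with a change of variables to arrive at the claimed formulas. The twist $\unitaryDetCharacter$ propagates cleanly through all of these manipulations.

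For the even case $n = 2m$, setting $\iota = \begin{pmatrix} & \IdentityMatrix{m}\\ \IdentityMatrix{m} & \end{pmatrix}$, the key matrix identity is $\weylElement{2m}\iota = \diag(\weylElement{m},\weylElement{m})$. Substituting $\Contragradient{W}\left(\evenPermutationMatrix \ShalikaUnipotentElement{X} \ShalikaDiagonalElement{g} \iota\right) = W\left(\diag(\weylElement{m},\weylElement{m}) \inverseTranspose{\left(\evenPermutationMatrix\ShalikaUnipotentElement{X}\ShalikaDiagonalElement{g}\right)}\right)$ and expanding the inverse transpose produces a block matrix that, after absorbing a block lower unipotent into a suitable element of $\UnipotentSubgroup_{2m}$ via the Whittaker equivariance (with the resulting Whittaker character contribution cancelling against $\fieldCharacter^{-1}(-\trace X)$ up to a measure-preserving involution of $X$), reduces to $W$ evaluated on $\evenPermutationMatrix \ShalikaUnipotentElement{X'} \SmallShalikaDiagonalElement{\weylElement{m}\inverseTranspose{g}}$ for an appropriate $X'$.

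Next, I perform the change of variables $g \mapsto \weylElement{m}\inverseTranspose{g}$ on $\lquot{\UnipotentSubgroup_m}{\GL{m}{\localField}}$, which is measure preserving because $\weylElement{m} \in \maximalCompactSubgroup_m$ and inverse transposition is an anti-involution. Under this substitution, $\abs{\det g}^{1-s}\unitaryDetCharacter^{-1}(\det g) \mapsto \abs{\det g}^{s-1}\unitaryDetCharacter(\det g)$, and $\lastrowvector g \mapsto \lastrowvector \weylElement{m}\inverseTranspose{g} = \firstrowvector \inverseTranspose{g}$, yielding the claimed formula for the even case. The odd case $n = 2m+1$ proceeds identically on the $\GL{2m}{F}$-block; the extra factor $\ShalikaLowerUnipotentElementOdd{Z}$ becomes an upper-right unipotent of the form $\ShalikaUpperRightUnipotentElementOdd{-\transpose{Z}}$ under inverse transposition after conjugation by the element $\antiDiagTwo{1}{\IdentityMatrix{2m}}$ (the odd analogue of $\iota$), and the Fourier transform $\FourierTransformWithRespectToCharacter{\phi}{\fieldCharacter}(Z)$ arises through the interchange of the integration variable $Z$ with the argument of $\phi$.

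The main obstacle is the combinatorial check that the Weyl-type element arising from $\inverseTranspose{\evenPermutationMatrix}$, together with the block lower unipotent and the diagonal rearrangement, can be absorbed into a genuine element of $\UnipotentSubgroup_{2m}$ on which $\fieldCharacter$ evaluates in a controlled way. Since $\evenPermutationMatrix$ and $\oddPermutationMatrix$ are explicit permutation matrices, this is a routine but tedious verification carried out in \cite{ye2018exterior}. Once that identity is established, the introduction of the twist $\unitaryDetCharacter$ enters only via the clean substitution $\unitaryDetCharacter^{-1}(\det g) \mapsto \unitaryDetCharacter(\det g)$ under $g \mapsto \inverseTranspose{g}$, and the rest of the argument is unchanged.
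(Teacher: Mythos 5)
The proposal correctly identifies the strategy, which is the one used in the cited reference: unfold the definition of $\TwistedDualJSOfLocalFieldRepresentation{s}{W}{\phi}$, substitute $\Contragradient{W}(h) = W(\weylElement{n}\inverseTranspose{h})$, manipulate matrices using the identity $\weylElement{2m}\iota = \diag(\weylElement{m},\weylElement{m})$ (together with its interlaced form $\weylElement{2m}\evenPermutationMatrix\iota = \evenPermutationMatrix\diag(\weylElement{m},\weylElement{m})$), and then change variables. Since the paper itself gives no proof and only cites \cite[Proposition 3.2]{ye2018exterior}, your sketch matches the paper's implicit route. You also correctly locate the only new ingredient: the twist $\unitaryDetCharacter$ enters solely through the factor $\unitaryDetCharacter^{-1}(\det h)$, and the task is to convert it to $\unitaryDetCharacter(\det g)$.

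Two points deserve attention. First, the phrase about ``absorbing a block lower unipotent into a suitable element of $\UnipotentSubgroup_{2m}$ via the Whittaker equivariance, with the resulting Whittaker character contribution cancelling against $\fieldCharacter^{-1}(-\trace X)$'' misdescribes the step. The block lower unipotent $\left(\begin{smallmatrix}\IdentityMatrix{m}&\\-\transpose{X}&\IdentityMatrix{m}\end{smallmatrix}\right)$ coming from $\ShalikaUnipotentElement{X}^{\iota}$ is conjugated by $\iota$ to the block \emph{upper} unipotent $\left(\begin{smallmatrix}\IdentityMatrix{m}&-\transpose{X}\\&\IdentityMatrix{m}\end{smallmatrix}\right)$, which after commuting past $\diag(\weylElement{m},\weylElement{m})$ is re-parameterized by the measure-preserving linear involution $X\mapsto X'=-\weylElement{m}\transpose{X}\weylElement{m}^{-1}$ of $\NilpotentLowerTriangular_m$; one then uses $\fieldCharacter^{-1}(-\trace X)=\fieldCharacter(-\trace X')$. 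Nothing is absorbed into $\UnipotentSubgroup_{2m}$ (in fact $\evenPermutationMatrix\ShalikaUnipotentElement{X'}\evenPermutationMatrix^{-1}$ is lower triangular for $X'\in\NilpotentLowerTriangular_m$), and there is no cancellation with a Whittaker character. Second, the assertion that ``the twist propagates cleanly'' glosses over the only genuinely new step: with $g=\weylElement{m}\inverseTranspose{h}$ one has $\det h = \det(\weylElement{m})\cdot\det(g)^{-1}$, hence $\unitaryDetCharacter^{-1}(\det h)=\unitaryDetCharacter(\det\weylElement{m})\,\unitaryDetCharacter(\det g)$, and the constant $\unitaryDetCharacter(\det\weylElement{m})=\unitaryDetCharacter\bigl((-1)^{\binom{m}{2}}\bigr)$ is not $1$ in general. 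You should either show that this constant is genuinely absent (i.e., that the variable change can be arranged so that no stray $\weylElement{m}$ lands in the $\det$-factor) or explain why it cancels elsewhere; as written, the sketch asserts clean propagation exactly where a potential sign could appear.
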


\begin{thm}[{\cite[Theorem 4.1]{matringe2014linear} \cite[Theorem 3.1]{CogdellMatringe15}}]\label{thm:functional-equation-gamma-factor}
	There exists a non-zero element $\twistedGammaFactorOfLocalField{s} \in \cComplex \left(q^{-s}\right)$, such that for every $W \in \WhittakerModelOfLocalFieldRepresentation$, $\phi \in \Schwartz \left( \localField^m \right)$ we have $$ \TwistedDualJSOfLocalFieldRepresentation{s}{W}{\phi} = \twistedGammaFactorOfLocalField{s} \TwistedJSOfLocalFieldRepresentation{s}{W}{\phi}.$$
	Furthermore, $$ \twistedGammaFactorOfLocalField{s} = \TwistedEpsilonFactorOfLocalField{s} \frac{\TwistedExteriorSquareLFunction{1-s}{\Contragradient{\localFieldRepresentation}}{\unitaryDetCharacter^{-1}}}{\TwistedExteriorSquareLFunction{s}{\localFieldRepresentation}{\unitaryDetCharacter}},$$
	where $\TwistedEpsilonFactorOfLocalField{s} = c \cdot q^{-ks}$, for $k \in \zIntegers$ and $c \in \multiplicativegroup{\cComplex}$.
\end{thm}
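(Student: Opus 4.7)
The plan is to establish the functional equation by a uniqueness argument, following Matringe \cite{matringe2014linear} and Cogdell--Matringe \cite{CogdellMatringe15}. First, I would view $\TwistedJSOfLocalFieldRepresentation{s}{W}{\phi}$ and $\TwistedDualJSOfLocalFieldRepresentation{s}{W}{\phi}$ as values at $s$ of a one-parameter family of equivariant bilinear forms on $\WhittakerModelOfLocalFieldRepresentation \otimes \Schwartz\left(\localField^m\right)$. Combining the defining integral of $J$ with the alternative expression for the dual furnished by \Cref{prop:js-dual-formulas}, a direct change of variables shows that both integrals satisfy the \emph{same} Shalika-type equivariance: translating $W$ by a Shalika element $\SmallShalikaDiagonalElement{g}\SmallShalikaUnipotentElement{X}$, with $\phi$ translated by the associated natural action, scales the integral by $\fieldCharacter\left(-\trace X\right)\abs{\det g}^{-s}\unitaryDetCharacter\left(\det g\right)^{-1}$ (after absorbing the Haar-measure modulus). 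This symmetry is the structural core of the argument.

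Second, I would invoke Bernstein's theorem on the meromorphic continuation of families of equivariant functionals together with a multiplicity-one statement for twisted Shalika functionals (originating with Jacquet--Shalika \cite{jacquet11exterior} and extended to the twisted setting by Matringe and Cogdell--Matringe). These combine to show that the relevant space of equivariant bilinear forms is one-dimensional for generic $s$, forcing $\TwistedDualJSOfLocalFieldRepresentation{s}{W}{\phi} = \twistedGammaFactorOfLocalField{s} \cdot \TwistedJSOfLocalFieldRepresentation{s}{W}{\phi}$ with a scalar $\twistedGammaFactorOfLocalField{s}$ independent of $(W,\phi)$. Since both $J$ and the dual are rational in $q^{-s}$ by the preceding theorem, and one can pick $W,\phi$ with $J \not\equiv 0$, the ratio $\twistedGammaFactorOfLocalField{s}$ lies in $\cComplex\left(q^{-s}\right)$.

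Finally, to extract the precise shape $\twistedGammaFactorOfLocalField{s} = \TwistedEpsilonFactorOfLocalField{s} \cdot \TwistedExteriorSquareLFunction{1-s}{\Contragradient{\localFieldRepresentation}}{\unitaryDetCharacter^{-1}} / \TwistedExteriorSquareLFunction{s}{\localFieldRepresentation}{\unitaryDetCharacter}$, define $\TwistedEpsilonFactorOfLocalField{s}$ by that identity. By the defining property of the $L$-factor as generator of the fractional ideal $I_{\localFieldRepresentation, \fieldCharacter, \unitaryDetCharacter}$, both $J$ divided by $\TwistedExteriorSquareLFunction{s}{\localFieldRepresentation}{\unitaryDetCharacter}$ and the dual divided by $\TwistedExteriorSquareLFunction{1-s}{\Contragradient{\localFieldRepresentation}}{\unitaryDetCharacter^{-1}}$ lie in $\cComplex\left[q^{s},q^{-s}\right]$, hence so does $\TwistedEpsilonFactorOfLocalField{s}$. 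Applying the functional equation both to $\localFieldRepresentation$ and to $\Contragradient{\localFieldRepresentation}$ and composing yields an identity of the form $\epsilon(s,\localFieldRepresentation) \cdot \epsilon(1-s,\Contragradient{\localFieldRepresentation}) = 1$, forcing $\TwistedEpsilonFactorOfLocalField{s}$ to be a unit in $\cComplex\left[q^{s},q^{-s}\right]$, hence a monomial $c \cdot q^{-ks}$ with $c \in \multiplicativegroup{\cComplex}$ and $k \in \zIntegers$. The principal obstacle is the twisted multiplicity-one statement for Shalika functionals: adapting the Jacquet--Shalika/Matringe argument to accommodate the twist by $\unitaryDetCharacter$ requires careful bookkeeping of Mackey-type decompositions in the Shalika subgroup and its interaction with the Weyl element $\permutationMatrix{n}$.
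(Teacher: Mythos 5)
Your proposal reconstructs the argument of Matringe and Cogdell--Matringe that the paper itself defers to: the paper gives no independent proof, only the remark that the proofs of the referenced theorems carry over with slight modifications once the twist by $\unitaryDetCharacter$ is inserted. Your outline (identical Shalika-type equivariance of $J$ and $\tilde J$ under the mirabolic and Shalika subgroups, twisted multiplicity-one together with Bernstein's continuation principle yielding proportionality, rationality of the resulting ratio in $q^{-s}$, and the double functional equation $\epsilon(s,\localFieldRepresentation)\,\epsilon(1-s,\Contragradient{\localFieldRepresentation})=1$ forcing $\TwistedEpsilonFactorOfLocalField{s}$ to be a unit in $\cComplex\left[q^s,q^{-s}\right]$, hence a monomial) is exactly the content of those cited proofs, so this is essentially the same approach.
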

The proof of the functional equation is very similar to the proofs of the referred theorems, and requires only slight modifications.

As before, we denote $\exteriorSquareLFunction{s}{\localFieldRepresentation} = \TwistedExteriorSquareLFunction{s}{\localFieldRepresentation}{1}$, $\gammaFactorOfLocalField{s} = \twistedGammaFactor{s}{\localFieldRepresentation}{1}$, and $\epsilonFactorOfLocalField{s} = \TwistedEpsilonFactor{s}{1}$.

\begin{thm}\label{thm:roots-of-l-function}Suppose that $ \representationDeclaration{\localFieldRepresentation}$ is an irreducible supercuspidal representation of $\GL{n}{\localField}$ 
	\begin{enumerate}
		\item If $n = 2m + 1$, then $\TwistedExteriorSquareLFunction{s}{\localFieldRepresentation}{\unitaryDetCharacter} = 1$.
		\item  If $n = 2m$, then $\TwistedExteriorSquareLFunction{s}{\localFieldRepresentation}{\unitaryDetCharacter} = \frac{1}{p\left(q^{-s}\right)}$, where $p\left(Z\right) \in \cComplex \left[Z\right]$ is a polynomial, such that $p\left(0\right) = 1$ and $p \left(Z\right) \mid 1 - \centralCharacter{\localFieldRepresentation} \left(\uniformizer\right) \unitaryDetCharacter\left(\uniformizer\right)^m Z^m$.
	\end{enumerate}
\end{thm}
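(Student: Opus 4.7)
The plan is to carry out an Iwasawa-type analysis of $\TwistedJSOfLocalFieldRepresentation{s}{W}{\phi}$ and to exploit the supercuspidal support property of Whittaker functions. After applying the Iwasawa decomposition $g = a k$ with $a \in \diagonalSubgroup_m$ and $k \in \maximalCompactSubgroup_m$ to the outer integral, and performing the $X$-integration against $\fieldCharacter(-\trace X)\, dX$, the integrand reduces to a sum of Whittaker values of $W$ at $\evenPermutationMatrix \cdot \diag(a, a) \cdot \diag(k, k)$ (resp.\ its odd analogue), multiplied by a compactly supported factor in $k$. The essential input, which is the analogue for Jacquet-Shalika integrals of the classical statement for Rankin-Selberg integrals of supercuspidal representations and is established for $\unitaryDetCharacter = 1$ in \cite{matringe2014linear,CogdellMatringe15,jo2018derivatives}, is that for supercuspidal $\localFieldRepresentation$ this integrand, as a function of $a \in \diagonalSubgroup_m$, is compactly supported modulo the intersection of the embedded torus with $Z(\GL{n}{\localField})$.

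For the odd case $n = 2m + 1$, the image of $\diagonalSubgroup_m$ under $a \mapsto \diag(a, a, 1)$ meets $Z(\GL{2m+1}{\localField})$ only at the identity, so the $a$-integration is compactly supported and $\TwistedJSOfLocalFieldRepresentation{s}{W}{\phi}$ is a Laurent polynomial in $q^{-s}$ for every $W, \phi$. Using the extra row-vector integration in the definition of the Jacquet-Shalika integral together with the freedom in the Schwartz function $\phi$, one realizes every Laurent monomial in $q^{-s}$ inside the span $I_{\localFieldRepresentation, \fieldCharacter, \unitaryDetCharacter}$, so $I_{\localFieldRepresentation, \fieldCharacter, \unitaryDetCharacter} = \cComplex[q^{-s}, q^{s}]$, and hence $p(Z) = 1$ and $\TwistedExteriorSquareLFunction{s}{\localFieldRepresentation}{\unitaryDetCharacter} = 1$.

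For the even case $n = 2m$, the embedded torus $a \mapsto \diag(a, a) = a I_{2m}$ coincides with $Z(\GL{2m}{\localField})$ along the scalar direction, and this is precisely the intersection. I would decompose $a = t I_m \cdot a'$ with $a'$ in a chosen transversal to $Z(\GL{m}{\localField})$ inside $\diagonalSubgroup_m$; the $a'$-integral is compactly supported by the support statement above. In the central $t$-direction the Whittaker function transforms by $\centralCharacter{\localFieldRepresentation}(t)$, the integrand contributes $\abs{t}^{ms} \unitaryDetCharacter(t)^m$, and $\phi(t \lastrowvector k)$ is a Schwartz function of $t$. Writing $t = \uniformizer^N u$ with $u \in \multiplicativegroup{\integersring}$ and summing over $N$ (which is bounded below by the support of $\phi$), one obtains a geometric series with ratio $\centralCharacter{\localFieldRepresentation}(\uniformizer)\unitaryDetCharacter(\uniformizer)^m q^{-ms}$, whose sum is a rational function in $Z = q^{-s}$ with denominator dividing $1 - \centralCharacter{\localFieldRepresentation}(\uniformizer) \unitaryDetCharacter(\uniformizer)^m Z^m$. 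Taking the common denominator $p(Z)$ of all such expressions as $W, \phi$ vary then gives the claimed divisibility.

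The main obstacle is the rigorous justification of the compact-support statement on the Iwasawa integrand for non-central $a$: namely, proving that the inner $\fieldCharacter(-\trace X)\, dX$ integration, combined with the supercuspidal assumption on $\localFieldRepresentation$, forces compact support in every non-central torus direction. This is a technical but essentially standard adaptation of the Jacquet-Shalika/Matringe argument; the twist by the unitary character $\unitaryDetCharacter$ plays no role in convergence or support and only appears in the ratio of the geometric series in the even case.
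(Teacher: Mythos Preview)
Your sketch is correct and follows essentially the same route as the references the paper defers to (\cite[Theorem 6.2 and Proposition 3.1]{jo2018derivatives}, which in turn build on \cite{jacquet11exterior,matringe2014linear,CogdellMatringe15}): the key input is exactly the compact-support statement for the Jacquet--Shalika integrand on the torus modulo the center when $\localFieldRepresentation$ is supercuspidal, after which the odd case yields Laurent polynomials (hence $p=1$) and the even case yields a Tate-type geometric series in the central variable with ratio $\centralCharacter{\localFieldRepresentation}(\uniformizer)\unitaryDetCharacter(\uniformizer)^m q^{-ms}$. One minor remark: in the odd case you do not need to realize every Laurent monomial --- once every $\TwistedJSOfLocalFieldRepresentation{s}{W}{\phi}$ lies in $\cComplex[q^{-s},q^s]$, the generator $1/p(q^{-s})$ of $I_{\localFieldRepresentation,\fieldCharacter,\unitaryDetCharacter}$ is itself a Laurent polynomial, and $p(0)=1$ forces $p=1$.
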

The proof of this statement is very similar to the proof of {\cite[Theorem 3.6]{jo2018derivatives}}. Its proof uses a slight modification of \cite[Proposition 3.4]{jo2018derivatives} for the twisted Jacquet-Shalika integral. See also \cite[Section 8, Theorem 1]{jacquet11exterior}, \cite[Section 9, Theorem 2]{jacquet11exterior} for the analogous global statements.

\begin{lem}\label{lem:computation-of-exterior-square-factors}
	Let $n = 2m$. Suppose that $\twistedGammaFactorOfLocalField{s} = c \cdot q^{-ks} \frac{p_1 \left( q^{-s}\right)}{p_2 \left( q^{-\left(1 - s\right)}\right)}$, where $c \in \multiplicativegroup{\cComplex}$, $k \in \zIntegers$, $p_1, p_2 \in \cComplex \left[ Z \right]$, such that $p_1 \left(0\right) = p_2 \left(0\right) = 1$ and $p_1 \left(Z\right)$ and $p_2 \left(q^{-1} Z^{-1} \right)$ don't have any mutual roots. Then $\TwistedExteriorSquareLFunction{s}{\localFieldRepresentation}{\unitaryDetCharacter} = \frac{1}{p_1 \left( q^{-s} \right)}$, $\TwistedExteriorSquareLFunction{s}{\Contragradient{\localFieldRepresentation}}{\unitaryDetCharacter^{-1}} = \frac{1}{p_2 \left( q^{-s} \right)}$, $\TwistedEpsilonFactorOfLocalField{s} = c \cdot q^{-ks}$.
\end{lem}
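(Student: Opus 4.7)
The plan is to combine the functional equation of Theorem~\ref{thm:functional-equation-gamma-factor} with the structure of the exterior square $L$-functions from Theorem~\ref{thm:roots-of-l-function} to produce a second factorization of $\twistedGammaFactorOfLocalField{s}$ of the same shape, and then to show that such a factorization is essentially unique under the non-cancellation hypothesis. By those two theorems, there exist polynomials $P_1(Z), P_2(Z) \in \cComplex[Z]$ with $P_1(0) = P_2(0) = 1$, a constant $c' \in \multiplicativegroup{\cComplex}$, and an integer $k' \in \zIntegers$, such that
\[ \TwistedExteriorSquareLFunction{s}{\localFieldRepresentation}{\unitaryDetCharacter} = \frac{1}{P_1(q^{-s})}, \qquad \TwistedExteriorSquareLFunction{s}{\Contragradient{\localFieldRepresentation}}{\unitaryDetCharacter^{-1}} = \frac{1}{P_2(q^{-s})}, \]
and $\twistedGammaFactorOfLocalField{s} = c' q^{-k's} P_1(q^{-s}) / P_2(q^{-(1-s)})$. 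Equating this with the hypothesized factorization and clearing denominators yields
\[ c \, q^{-ks} \, p_1(q^{-s}) \, P_2(q^{-(1-s)}) = c' \, q^{-k's} \, P_1(q^{-s}) \, p_2(q^{-(1-s)}), \]
and it then suffices to deduce from this identity that $p_1 = P_1$, $p_2 = P_2$, $c = c'$, and $k = k'$.

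Next, I would verify that $P_1$ and $P_2$ themselves satisfy the non-cancellation condition imposed on $p_1, p_2$. By Theorem~\ref{thm:roots-of-l-function}, $P_1(Z)$ divides $1 - \centralCharacter{\localFieldRepresentation}(\uniformizer) \unitaryDetCharacter(\uniformizer)^m Z^m$, and applying the same theorem to $\Contragradient{\localFieldRepresentation}$ with the twist $\unitaryDetCharacter^{-1}$, $P_2(Z)$ divides $1 - \centralCharacter{\localFieldRepresentation}(\uniformizer)^{-1} \unitaryDetCharacter(\uniformizer)^{-m} Z^m$. Hence any root $\alpha$ of $P_1$ satisfies $\alpha^m = \centralCharacter{\localFieldRepresentation}(\uniformizer)^{-1} \unitaryDetCharacter(\uniformizer)^{-m}$, while every root $X$ of the rational function $P_2(q^{-1} Z^{-1})$ in the variable $Z$ has the form $X = q^{-1} \beta^{-1}$ with $\beta$ a root of $P_2$, so $X^m = q^{-m} \centralCharacter{\localFieldRepresentation}(\uniformizer)^{-1} \unitaryDetCharacter(\uniformizer)^{-m}$. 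A common root would force $q^{-m} = 1$, which is impossible, so $P_1(Z)$ and $P_2(q^{-1} Z^{-1})$ have no common roots.

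Finally, I would pass from the rational identity to a polynomial identity and use coprimality. Setting $X = q^{-s}$ and writing $\widetilde{p}_2(X) := X^{\deg p_2} p_2(q^{-1}/X)$ and analogously $\widetilde{P}_2(X)$, which are polynomials whose leading and constant coefficients are both nonzero, the identity becomes $c \, X^{k + \deg p_2} p_1(X) \widetilde{P}_2(X) = c' \, X^{k' + \deg P_2} P_1(X) \widetilde{p}_2(X)$. Since $p_1(0) = P_1(0) = 1$ and $\widetilde{p}_2(0), \widetilde{P}_2(0) \neq 0$, equating orders of vanishing at $X = 0$ forces $k + \deg p_2 = k' + \deg P_2$, and then $c \, p_1(X) \widetilde{P}_2(X) = c' \, P_1(X) \widetilde{p}_2(X)$. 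Using that $p_1$ is coprime to $\widetilde{p}_2$ by hypothesis and $P_1$ is coprime to $\widetilde{P}_2$ by the previous paragraph, a standard divisibility argument yields $p_1 \mid P_1$ and $P_1 \mid p_1$, hence $p_1 = P_1$ after normalizing by $p_1(0) = P_1(0) = 1$. Substituting back, $\widetilde{P}_2$ and $\widetilde{p}_2$ must differ by a scalar, and comparing leading coefficients (both equal to $p_2(0) = P_2(0) = 1$) gives $c = c'$ and $\widetilde{p}_2 = \widetilde{P}_2$, so $p_2 = P_2$ and finally $k = k'$. The heart of the argument, and the step I would expect to demand the most care, is establishing the disjointness of the roots of $P_1(Z)$ and $P_2(q^{-1} Z^{-1})$ in the middle paragraph, since this is what makes the canonical factorization coming from the functional equation genuinely unique.
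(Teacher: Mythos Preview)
Your proof is correct and follows essentially the same approach as the paper: set up the canonical factorization from Theorem~\ref{thm:functional-equation-gamma-factor}, use Theorem~\ref{thm:roots-of-l-function} to verify that the canonical $P_1(Z)$ and $P_2(q^{-1}Z^{-1})$ share no roots, and then run a uniqueness-of-factorization argument in $\cComplex[Z,Z^{-1}]$. The only cosmetic difference is that the paper argues one-sidedly (showing $P_1 \mid p_1$ and $P_2 \mid p_2$, then using the hypothesis on $p_1, p_2$ to force the quotients to be constant), whereas you use both coprimality conditions symmetrically to get $p_1 \mid P_1$ and $P_1 \mid p_1$ directly; the content is the same.
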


\begin{proof}
	Write $\TwistedExteriorSquareLFunction{s}{\localFieldRepresentation}{\unitaryDetCharacter} = \frac{1}{p_\localFieldRepresentationIndex \left(q^{-s} \right)}$, $\TwistedExteriorSquareLFunction{s}{\Contragradient{\localFieldRepresentation}}{\unitaryDetCharacter^{-1}} = \frac{1}{p_\localFieldRepresentationContragradientIndex\left(q^{-s} \right)}$, and $\epsilonFactorOfLocalField{s} = c_\localFieldRepresentationIndex \cdot q^{-k_\localFieldRepresentationIndex \cdot  s}$, where $c_\localFieldRepresentationIndex \in \multiplicativegroup{\cComplex}$, $k_\localFieldRepresentationIndex \in \zIntegers$ and $p_\localFieldRepresentationIndex, p_\localFieldRepresentationContragradientIndex \in \cComplex \left[Z\right]$ satisfy $p_\localFieldRepresentationIndex \left(0\right) = p_\localFieldRepresentationContragradientIndex \left(0\right) = 1$.
	
	Then by \Cref{thm:functional-equation-gamma-factor} we have the equality $$\twistedGammaFactorOfLocalField{s} = c \cdot \left(q^{-s}\right)^k \frac{p_1\left( q^{-s}\right)}{p_2 \left( q^{-1} \left(q^{-s}\right)^{-1}\right)} = c_\localFieldRepresentationIndex \cdot \left(q^{-s}\right)^{k_\localFieldRepresentationIndex} \frac{p_\localFieldRepresentationIndex \left(q^{-s}\right)}{p_\localFieldRepresentationContragradientIndex \left(q^{-1} \left(q^{-s}\right)^{-1}\right)},$$ which implies that $$c Z^k p_1 \left(  Z\right) p_\localFieldRepresentationContragradientIndex \left(q^{-1} Z^{-1}\right) = c_\localFieldRepresentationIndex Z^{k_\localFieldRepresentationIndex} p_\localFieldRepresentationIndex \left(Z\right) p_2 \left(q^{-1} Z^{-1}\right),$$
	as elements of the polynomial ring $\cComplex\left[Z, Z^{-1}\right]$.
	
	By \Cref{thm:roots-of-l-function}, $p_\localFieldRepresentationIndex \left(Z\right) \mid 1 - \centralCharacter{\localFieldRepresentation}\left(\uniformizer\right) \unitaryDetCharacter \left( \uniformizer \right)^m Z^m$ and $p_\localFieldRepresentationContragradientIndex \left(Z\right) \mid 1 - \centralCharacter{\localFieldRepresentation}^{-1}\left(\uniformizer\right) \unitaryDetCharacter \left( \uniformizer \right)^{-m}  Z^m$. Therefore, we get that $p_\localFieldRepresentationIndex \left(Z\right)$ and $p_\localFieldRepresentationContragradientIndex \left(q^{-1} Z^{-1}\right)$ have no mutual roots. Note that they also don't have zero or infinity as a root. Therefore we conclude that every root of $p_\localFieldRepresentationIndex$ (including multiplicity) is a root of $p_1$, which implies that $p_1 \left(Z\right) = h_1 \left(Z\right) p_\localFieldRepresentationIndex \left(Z\right)$, where $h_1 \in \cComplex \left[Z\right]$, with $h_1 \left(0\right) = 1$. Similarly, we get that $p_2 \left(Z\right) = h_2 \left(Z\right) p_\localFieldRepresentationContragradientIndex \left(Z\right)$, where $h_2 \in \cComplex \left[Z\right]$, with $h_2 \left(0\right) = 1$. Hence we get that $c Z^k h_1 \left(Z\right) = c_\localFieldRepresentationIndex Z^{k_\localFieldRepresentationIndex} h_2 \left(q^{-1} Z^{-1}\right)$. Since $p_1 \left(Z\right)$ and $p_2 \left(q^{-1} Z^{-1}\right)$ don't have any mutual roots, and since both don't have zero or infinity as a root, we get that $h_1 \left(Z\right), h_2 \left( Z \right)$ are constants. Therefore $h_1 = h_2 = 1$, and the result follows.
\end{proof}

\subsection{Simple supercuspidal representations}\label{subsubsection:simple-supercuspidal}

Let $n$ be a positive integer.

Let $\multiplicativeCharacter : \multiplicativegroup{\localField} \rightarrow \multiplicativegroup{\cComplex} $ be a multiplicative character such that $\multiplicativeCharacter \restriction_{1 + \maximalideal} = 1$.

Let $\proUnipotentRadical_n = \quotientMap^{-1} \left( \residueFieldUnipotentSubgroup{n} \right)$ be the pro-unipotent radical of the standard Iwahori subgroup of $\GL{n}{\localField}$, where $\residueFieldUnipotentSubgroup{n}$ is the upper unipotent subgroup of $\GL{n}{\residueField}$. Denote $\centerTimesProUnipotent_n = \multiplicativegroup{\localField}\proUnipotentRadical_n$.

Let $t_0 \in \rquot{\multiplicativegroup{\integersring}}{1 + \maximalideal} \isomorphic \multiplicativegroup{\residueField}$. Let $t \in \multiplicativegroup{\integersring}$ be a lift of $t_0$, i.e. $\quotientMap \left(t\right) = t_0$. We define an affine generic character $\affineGenericCharacter : \centerTimesProUnipotent_n \rightarrow \multiplicativegroup{\cComplex}$ by $$\affineGenericCharacter \left( z k \right) = \multiplicativeCharacter \left(z\right) \fieldCharacter \left(\sum_{i = 1}^{n - 1}{a_i} + ta_n \right),$$ where $z \in \multiplicativegroup{\localField}$, and $$ k = \begin{pmatrix}
		x_1              & a_1  & \ast   & \cdots  & \ast    \\
		\ast             & x_2  & a_2    & \cdots  & \ast    \\
		\vdots           &      & \ddots & \ddots  & \vdots  \\
		\ast             & \ast & \cdots & x_{n-1} & a_{n-1} \\
		\uniformizer a_n & \ast & \cdots & \ast    & x_n
	\end{pmatrix} \in \proUnipotentRadical_n.$$
Note that $\affineGenericCharacter$ does not depend on the choice $t$, because the conductor of $\fieldCharacter$ is $\maximalideal$.

Let $\rootOfUniformizer \in \cComplex$ be an $n$th root of $\multiplicativeCharacter \left(\modifiedUniformizer\right)$.

Denote $\cyclicGroupGenerator{n} = \smallAntiDiagTwo{\IdentityMatrix{n-1}}{\modifiedUniformizer}$, $\modifiedCenterTimesProUnipotent_n = \grpGeneratedBy{\cyclicGroupGenerator{n}} \centerTimesProUnipotent_n$. We define a character $\affineGenericCharacter_{\rootOfUniformizer} : \modifiedCenterTimesProUnipotent_n \rightarrow \multiplicativegroup{\cComplex}$ by $\affineGenericCharacter_{\rootOfUniformizer} \left(\cyclicGroupGenerator{n}^j h \right) = \rootOfUniformizer^j \affineGenericCharacter \left(h\right)$, for $j \in \zIntegers$ and $h \in \centerTimesProUnipotent_n$.

\begin{thm}[{\cite[Section 4.3]{knightly2015simple}}]\label{thm:simple-supercuspidal}
	The representation $\simpleRep = \CompactInd{\modifiedCenterTimesProUnipotent_n}{\GL{n}{\localField}}{\affineGenericCharacter_{\rootOfUniformizer}}$ is an irreducible supercuspidal representation of $\GL{n}{\localField}$.
\end{thm}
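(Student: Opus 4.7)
The plan is to apply Mackey's irreducibility criterion for compactly induced representations. Since $\modifiedCenterTimesProUnipotent_n$ is open in $\GL{n}{\localField}$ and compact modulo the center, once $\simpleRep$ is shown to be irreducible it is automatically admissible supercuspidal by standard results on compact induction from subgroups compact-mod-center. Thus the task reduces to (a) verifying that $\affineGenericCharacter_{\rootOfUniformizer}$ is a well-defined character of $\modifiedCenterTimesProUnipotent_n$, and (b) checking the Mackey intertwining condition.

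For (a), I would first check that $\affineGenericCharacter$ is a character of $\centerTimesProUnipotent_n$. Its restriction to $\proUnipotentRadical_n$ descends through the deeper congruence subgroup in which $a_1, \dots, a_{n-1} \in \maximalideal$ and $a_n \in \maximalideal^2$; on the resulting abelian quotient the map $k \mapsto \sum_{i=1}^{n-1} a_i + t a_n$ is additive, so its composition with $\fieldCharacter$ is a character of $\proUnipotentRadical_n$. Compatibility with $\multiplicativeCharacter$ on $\multiplicativegroup{\integersring} \cap \proUnipotentRadical_n = 1 + \maximalideal$ is immediate because $\multiplicativeCharacter |_{1 + \maximalideal} = 1$ and $\fieldCharacter$ has conductor $\maximalideal$. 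Next, a direct computation gives $\cyclicGroupGenerator{n}^{n} = \modifiedUniformizer \cdot \IdentityMatrix{n} \in \multiplicativegroup{\localField}$ with $\affineGenericCharacter(\cyclicGroupGenerator{n}^{n}) = \multiplicativeCharacter(\modifiedUniformizer) = \rootOfUniformizer^{n}$, and conjugation by $\cyclicGroupGenerator{n}$ cyclically permutes the entries $(a_1, \dots, a_{n-1}, t a_n)$ and hence preserves $\affineGenericCharacter$. These two facts exactly say that $\affineGenericCharacter_{\rootOfUniformizer}$ is a well-defined character of the cyclic extension $\modifiedCenterTimesProUnipotent_n$ with prescribed value $\rootOfUniformizer$ on $\cyclicGroupGenerator{n}$.

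For (b), by Mackey's theorem one must show that for every $g \in \GL{n}{\localField} \setminus \modifiedCenterTimesProUnipotent_n$, the characters $\affineGenericCharacter_{\rootOfUniformizer}$ and $h \mapsto \affineGenericCharacter_{\rootOfUniformizer}(g^{-1} h g)$ disagree on some element of $\modifiedCenterTimesProUnipotent_n \cap g \modifiedCenterTimesProUnipotent_n g^{-1}$. Using the Iwahori-type affine Bruhat decomposition of $\GL{n}{\localField}$, it suffices to check this when $g$ is a representative of a non-identity element of $\widetilde{W} / \grpGeneratedBy{\cyclicGroupGenerator{n}}$, where $\widetilde{W}$ denotes the extended affine Weyl group. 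Affine genericity of $\affineGenericCharacter$---that is, its non-triviality on each of the $n$ simple affine root subgroups---should guarantee that for any such $g$, some simple affine root subgroup $U_{\alpha} \subseteq \proUnipotentRadical_n \cap g \proUnipotentRadical_n g^{-1}$ supports a witness element $u$ with $\affineGenericCharacter(u) \neq \affineGenericCharacter(g^{-1} u g)$. The main obstacle is precisely this combinatorial verification: one must identify the stabilizer of $\affineGenericCharacter$ in $\widetilde{W}$ as exactly $\grpGeneratedBy{\cyclicGroupGenerator{n}}$, which requires tracking the action of $\widetilde{W}$ on the $\widetilde{A}_{n-1}$ system of affine simple roots. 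This is the standard affine-genericity argument in the sense of Gross--Reeder, executed in detail in \cite[\S 4.3]{knightly2015simple}.
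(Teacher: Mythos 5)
The paper does not prove this theorem itself; it states it with a citation to \cite[Section 4.3]{knightly2015simple} and relies on that reference. Your sketch—well-definedness of $\affineGenericCharacter_{\rootOfUniformizer}$, Mackey's criterion for compact induction from an open compact-mod-center subgroup, reduction to affine Bruhat/extended affine Weyl group representatives, and affine genericity producing a witness in a simple affine root subgroup—is an accurate high-level summary of exactly the Gross--Reeder-style argument carried out in that reference, so it matches the (cited) approach.
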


A representation $\simpleRep$ such as in \cref{thm:simple-supercuspidal} is called a \textbf{simple supercuspidal representation}. We say that $\pi = \simpleRep$ is a simple supercuspidal representation with central character $\multiplicativeCharacter$, associated with the data $\left(t_0, \rootOfUniformizer\right)$. Simple supercuspidal representations were first constructed by Gross and Reeder in \cite{GrossReeder10} for groups that are simply connected, almost simple and split over the non-Archimedean field $\localField$.

By the proof of \cite[Corollary 5.3]{knightly2015simple}, there exist exactly $n \left(q - 1\right)$ equivalence classes of simple supercuspidal representations with a given central character, each of which corresponds to a pair $\left(t_0, \rootOfUniformizer\right)$.

By \cite[Section 3.3]{adrian2016some}, we have that if $W : \GL{n}{\localField} \rightarrow \cComplex$ is the function supported on $\UnipotentSubgroup_n \modifiedCenterTimesProUnipotent_n$ (where $\UnipotentSubgroup_n$ is the upper triangular unipotent subgroup of $\GL{n}{\localField}$), defined by \begin{align}\label{eq:simple-supercuspidal-whittaker-function}
	W \left(u h'\right) = \fieldCharacter \left(u\right) \affineGenericCharacter_{\rootOfUniformizer} \left(h'\right)& & u \in \UnipotentSubgroup_n, h' \in \modifiedCenterTimesProUnipotent_n,
\end{align} then $W \in \Whittaker \left(\simpleRep, \fieldCharacter\right)$ is a Whittaker function.

\section{Computation of the twisted exterior square factors}\label{sec:computation}

In this section, we compute the twisted exterior square factors of a simple supercuspidal representation. Throughout this section, let $t_0 \in \rquot{\multiplicativegroup{\integersring}}{1 + \maximalideal} \isomorphic \multiplicativegroup{\residueField}$, $t \in \multiplicativegroup{\integersring}$, $\multiplicativeCharacter : \multiplicativegroup{\localField} \rightarrow \multiplicativegroup{\cComplex}$, $\rootOfUniformizer \in \multiplicativegroup{\cComplex}$ be as in \Cref{subsubsection:simple-supercuspidal}. We denote $\localFieldRepresentation = \simpleRep$. Our goal is to compute the twisted exterior square factors of $\localFieldRepresentation$.

\subsection{Preliminary lemmas}

In order to compute the twisted exterior factors of $\localFieldRepresentation$, we will use the function $\localFieldRepresentation\left( \evenPermutationMatrix^{-1} \right) W$, where $W$ is the Whittaker function from \Cref{subsubsection:simple-supercuspidal}. Before beginning our computation, we need some lemmas regarding the support of the integrand of the twisted Jacquet-Shalika integral $\TwistedJSOfLocalFieldRepresentation{s}{\localFieldRepresentation \left( \evenPermutationMatrix^{-1} \right) W}{\phi}$.

Denote for $1 \le l \le m$, $\uniformizerDiagonalMatrix{l} = \smallDiagTwo{\IdentityMatrix{m - l}}{\modifiedUniformizer \IdentityMatrix{l}}$,
$\antidiagPermutationMatrix{l} = \smallAntiDiagTwo{\IdentityMatrix{m-l}}{\IdentityMatrix{l}}$, and denote by $\antidiagPermutation{l}$ the permutation defined by the columns of $\antidiagPermutationMatrix{l}$, i.e. $$\antidiagPermutationMatrix{l} =  \begin{pmatrix}
\standardColumnVector{\antidiagPermutation{l}\left(1\right)} & \dots & \standardColumnVector{\antidiagPermutation{l}\left(m\right)}
\end{pmatrix}.$$

\begin{lem} \label{lem:support-finite-field}
	Suppose that $g \in \GL{m}{\residueField}$, $X = \left(x_{ij}\right) \in \NilpotentLowerTriangular_m \left(\residueField\right)$ is a lower triangular nilpotent matrix, such that $$ \evenPermutationMatrix \ShalikaUnipotentElement{X} \ShalikaDiagonalElement{g} \evenPermutationMatrix^{-1} \in \residueFieldUnipotentSubgroup{2m} \antiDiagTwo{\IdentityMatrix{2m - 2l}}{\IdentityMatrix{2l}} \residueFieldUnipotentSubgroup{2m},$$
	for $1 \le l \le m$. Then
	\begin{enumerate}
		\item $g \in \residueFieldUnipotentSubgroupDoubleCoset{m}{l}$.
		\item If $g \in \antidiagPermutationMatrix{l} \residueFieldUnipotentSubgroup{m}$, then $x_{ij} = 0$ for every $j < i$ such that $\antidiagPermutation{l}^{-1}\left(j\right) < \antidiagPermutation{l}^{-1}\left(i\right)$, or equivalently $X \in \NilpotentLowerTriangular_m \left(\residueField\right) \cap \left( \antidiagPermutationMatrix{l} \NilpotentUpperTriangular_m \left(\residueField\right) \antidiagPermutationMatrix{l}^{-1} \right)$, where $\NilpotentUpperTriangular_m \left(\residueField\right)$ is the subgroup of $\SquareMat{m}{\residueField}$ consisting of upper triangular nilpotent matrices.
	\end{enumerate}
	Furthermore, for $g \in \antidiagPermutationMatrix{l} \residueFieldUnipotentSubgroup{m}$ and such $X$, $$\evenPermutationMatrix \ShalikaUnipotentElement{X} \ShalikaDiagonalElement{g} \evenPermutationMatrix^{-1} = \antiDiagTwo{\IdentityMatrix{2m - 2l}}{\IdentityMatrix{2l}} v,$$ where $v \in \residueFieldUnipotentSubgroup{2m}$ is an upper triangular unipotent matrix, having zeros right above its diagonal.
\end{lem}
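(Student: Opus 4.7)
The plan is to exploit the $2 \times 2$ block structure that conjugation by $\evenPermutationMatrix$ imposes on the Shalika element, thereby reducing the $\GL{2m}{\residueField}$-Bruhat cell condition to a $\GL{m}{\residueField}$-Bruhat cell condition on $g$, supplemented by vanishing conditions on $X$. First I would compute $M := \evenPermutationMatrix \ShalikaUnipotentElement{X}\ShalikaDiagonalElement{g} \evenPermutationMatrix^{-1}$ explicitly. Since $\evenPermutationMatrix$ interleaves the two halves of the indices (row $i \leftrightarrow 2i-1$ and row $m+i \leftrightarrow 2i$), and $\ShalikaUnipotentElement{X}\ShalikaDiagonalElement{g} = \left(\begin{smallmatrix} g & Xg \\ 0 & g \end{smallmatrix}\right)$, one checks that the $(i,j)$-th $2 \times 2$ block of $M$ equals $\left(\begin{smallmatrix} g_{ij} & (Xg)_{ij} \\ 0 & g_{ij} \end{smallmatrix}\right)$.

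The key observation for part (1) is that $\antiDiagTwo{\IdentityMatrix{2m-2l}}{\IdentityMatrix{2l}} = \antidiagPermutationMatrix{2l} \in \GL{2m}{\residueField}$, under the pair-grouping $(2k-1, 2k) \leftrightarrow k$ of indices, acts on the $m$ pairs exactly as $\antidiagPermutationMatrix{l}$ acts on $\{1, \ldots, m\}$. Because the block structure of $M$ respects this grouping (the two diagonal entries of each $2\times 2$ block coincide, and the bottom-left is zero), the $\GL{2m}{\residueField}$-Bruhat cell condition on $M$ descends to a $\GL{m}{\residueField}$-Bruhat cell condition on $g$, giving $g \in \residueFieldUnipotentSubgroup{m} \antidiagPermutationMatrix{l} \residueFieldUnipotentSubgroup{m}$.

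For (2), write $g = \antidiagPermutationMatrix{l} u_0$ with $u_0 \in \residueFieldUnipotentSubgroup{m}$ and multiply $M$ on the right by $\evenPermutationMatrix \ShalikaDiagonalElement{u_0^{-1}} \evenPermutationMatrix^{-1}$, which one checks is upper unipotent in $\GL{2m}{\residueField}$ and hence preserves the Bruhat cell. This reduces us to the case $g = \antidiagPermutationMatrix{l}$, where the $(i,j)$-block of $M$ becomes $\left(\begin{smallmatrix} \delta_{i, \antidiagPermutation{l}(j)} & X_{i, \antidiagPermutation{l}(j)} \\ 0 & \delta_{i, \antidiagPermutation{l}(j)} \end{smallmatrix}\right)$. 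A column-by-column pivot analysis then forces $X_{i, \antidiagPermutation{l}(j)} = 0$ whenever $\antidiagPermutation{l}^{-1}(i) > j$; substituting $k = \antidiagPermutation{l}(j)$, this becomes $X_{ik} = 0$ whenever $\antidiagPermutation{l}^{-1}(i) > \antidiagPermutation{l}^{-1}(k)$, which combined with the lower-triangularity of $X$ is exactly the stated vanishing. The ``furthermore'' then follows by direct substitution: $v := \antidiagPermutationMatrix{2l}^{-1} M$ is upper unipotent, and its super-diagonal consists of entries of the form $X_{\antidiagPermutation{l}(k), \antidiagPermutation{l}(k)}$ (from the upper-right of diagonal blocks, after the pair-permutation) and structural zeros (from the lower-left of off-diagonal blocks), all of which vanish by nilpotency of $X$.

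The principal technical obstacle is the pivot analysis in (2), i.e.\ precisely tracking which entries of $X$ are forced to vanish in order for the Bruhat cell condition on $M$ to hold. I would handle this by processing the columns of $M$ from left to right, using the pinned-down structure of $g = \antidiagPermutationMatrix{l}$ to identify the pivot row of each column, and then forcing the auxiliary upper-right block entries to vanish whenever they would otherwise produce a pivot row incompatible with the target Weyl element $\antidiagPermutationMatrix{2l}$.
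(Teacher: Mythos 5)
Your opening computation is correct: after conjugation by $\evenPermutationMatrix$, the $(i,j)$-th $2 \times 2$ block of $M$ is indeed $\left(\begin{smallmatrix} g_{ij} & (Xg)_{ij} \\ 0 & g_{ij} \end{smallmatrix}\right)$, and $w_{2l}$ does act on the $m$ index-pairs as $w_l$ does on $\{1,\dots,m\}$. But the sentence carrying the proof of part (1) --- ``the $\GL{2m}{\residueField}$-Bruhat cell condition on $M$ descends to a $\GL{m}{\residueField}$-Bruhat cell condition on $g$'' --- is an assertion, not an argument, and it hides the entire difficulty. Applying the rank criterion for Bruhat cells to $M[\ge 2a-1,\ \le 2b]$, which after permuting rows and columns has the block-upper-triangular shape $\left(\begin{smallmatrix} A & B \\ 0 & A \end{smallmatrix}\right)$ with $A = g[a..m,\,1..b]$, only yields $\mathrm{rank}(A) \le \tfrac{1}{2}\mathrm{rank}\left(\begin{smallmatrix} A & B \\ 0 & A \end{smallmatrix}\right) = r_{\antidiagPermutation{l}}(a,b)$. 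The other index choices $(2a,2b)$, $(2a-1,2b-1)$, $(2a,2b-1)$ give inequalities pointing the same way, so this route proves only that the Bruhat cell of $g$ is $\le \antidiagPermutation{l}$ in the Bruhat order, not that it equals $\antidiagPermutation{l}$. The off-diagonal data $X$ can genuinely affect the cell of $M$ (think of $M = U_X G$ with $U_X$ lower unipotent; left multiplication by a lower unipotent does not preserve, and need not raise, a Bruhat cell), so the descent is not formal and cannot be dispatched by inspection of the block shape.

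The paper takes a different and much shorter route. It cites \cite[Lemma~2.33]{ye2018exterior}, which already proves parts (1)--(2) and the ``furthermore'' when $g = wdu$ (monomial times upper unipotent); since those two parts only concern $g \in \antidiagPermutationMatrix{l}\residueFieldUnipotentSubgroup{m}$, they are covered outright. What remains is part (1) for an arbitrary $g = u_1 w d u_2$. The paper conjugates by $\evenPermutationMatrix \SmallShalikaDiagonalElement{u_1} \evenPermutationMatrix^{-1} \in \residueFieldUnipotentSubgroup{2m}$, which replaces $X$ by $u_1^{-1}Xu_1$, then splits $u_1^{-1}Xu_1 = L + U$ into lower-nilpotent plus upper-triangular parts and absorbs $\evenPermutationMatrix \SmallShalikaUnipotentElement{U}\evenPermutationMatrix^{-1} \in \residueFieldUnipotentSubgroup{2m}$ into the left $\residueFieldUnipotentSubgroup{2m}$-factor. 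This reduces to the pair $(L,\ wdu_2)$, which is exactly the case the cited lemma handles. That conjugation-and-split step is the missing ingredient in your descent: it is what rigorously converts the $\GL{2m}$-cell statement into a $\GL{m}$-cell statement.

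For part (2), your reduction to $g = \antidiagPermutationMatrix{l}$ by right-multiplying by the upper unipotent $\evenPermutationMatrix\SmallShalikaDiagonalElement{u_0^{-1}}\evenPermutationMatrix^{-1}$ is correct and in the same spirit as the paper. The ``furthermore'' verification via $v = w_{2l}^{-1}M$ is also sound: the diagonal blocks become $I_2$ because $X$ has zero diagonal, and the superdiagonal entries that come from the lower-left corners of the superdiagonal blocks are identically zero. But the central claim of part (2) --- that the cell condition forces $X_{i,\antidiagPermutation{l}(j)} = 0$ whenever $\antidiagPermutation{l}^{-1}(i) > j$ --- is left as a ``pivot analysis'' you say you \emph{would} do, so as submitted this is a plan, not a proof. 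In short: the framing is sensible and several pieces check out, but the proposal proves strictly less than it claims, and the step you wave through in part (1) is precisely the nontrivial content that the paper handles by Bruhat-decomposing $g$ and invoking \cite[Lemma~2.33]{ye2018exterior}.
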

\begin{proof}
	The lemma is proved in \cite[Lemma 2.28]{ye2018exterior} for the case that $g = wdu$, where $w$ is a permutation matrix, $d$ is a diagonal matrix and $u \in \residueFieldUnipotentSubgroup{m}$. Therefore we need only to show the first part for general $g$. By the Bruhat decomposition, we can write $g = u_1 wd u_2$, where $u_1, u_2 \in \residueFieldUnipotentSubgroup{m}$, $w$ is a permutation matrix, and $d$ is a diagonal matrix. Denote $g' = wdu_2$. We have $$\evenPermutationMatrix \ShalikaUnipotentElement{X} \ShalikaDiagonalElement{g} \evenPermutationMatrix^{-1} = \evenPermutationMatrix \ShalikaDiagonalElement{u_1} \evenPermutationMatrix^{-1} \evenPermutationMatrix \ShalikaUnipotentElement{u_1^{-1} X u_1} \ShalikaDiagonalElement{g'} \evenPermutationMatrix^{-1}.$$
	We have that $\evenPermutationMatrix \SmallShalikaDiagonalElement{u_1} \evenPermutationMatrix^{-1} \in \residueFieldUnipotentSubgroup{2m}$. Write $u_1^{-1}X u_1 = L + U$, where $L \in \NilpotentLowerTriangular_m\left(\residueField\right)$ is a lower triangular nilpotent matrix and $U \in \UpperTriangularAdditive_m\left(\residueField\right)$ is an upper triangular matrix. Then we have that $$\evenPermutationMatrix \ShalikaUnipotentElement{u_1^{-1}Xu_1} = \evenPermutationMatrix \ShalikaUnipotentElement{U} \evenPermutationMatrix^{-1} \evenPermutationMatrix \ShalikaUnipotentElement{L}.$$
	Since $\evenPermutationMatrix \SmallShalikaUnipotentElement{U} \evenPermutationMatrix^{-1} \in \residueFieldUnipotentSubgroup{2m}$, and since $\evenPermutationMatrix \SmallShalikaDiagonalElement{u_1} \evenPermutationMatrix^{-1} \in \residueFieldUnipotentSubgroup{2m}$, we get that $$\evenPermutationMatrix \ShalikaUnipotentElement{L} \ShalikaDiagonalElement{g'} \evenPermutationMatrix^{-1} \in \residueFieldUnipotentSubgroup{2m} \antiDiagTwo{\IdentityMatrix{2m - 2l}}{\IdentityMatrix{2l}} \residueFieldUnipotentSubgroup{2m}.$$
	Since $g' = wdu_2$, we get from \cite[Lemma 2.28]{ye2018exterior} that $wd = \antidiagPermutationMatrix{l}$, as required.
\end{proof}

\begin{lem}[{\cite[Lemma 2.29]{ye2018exterior}}] \label{lem:support-size-finite-field}
	\begin{enumerate}
		\item Let $d \in \GL{m}{\residueField}$ be a diagonal matrix. Then $\sizeof{\residueFieldUnipotentSubgroup{m} \antidiagPermutationMatrix{l} d \residueFieldUnipotentSubgroup{m}} = q^{\binom{m}{2} - \binom{l}{2} - \binom{m - l}{2}} \cdot \sizeof{\residueFieldUnipotentSubgroup{m}}$. Here $\binom{k}{2} = \frac{k \left(k  - 1\right)}{2}$, for any non-negative integer $k$.
		\item The set $$ \NilpotentLowerTriangular_m \left(\residueField\right) \cap \left(\antidiagPermutationMatrix{l} \NilpotentUpperTriangular_m \left(\residueField\right) \antidiagPermutationMatrix{l}^{-1}\right) = \left\{ \left(x_{ij}\right) \in \NilpotentLowerTriangular_m \left(\residueField\right) \mid x_{ij} = 0, \, \forall j < i \text{ s.t. } \antidiagPermutationMatrix{l}^{-1}\left(j\right) < \antidiagPermutationMatrix{l}^{-1}\left(i\right) \right\} $$ is of cardinality $q^{\binom{m}{2} - \binom{l}{2} - \binom{m-l}{2}}$. Here $\NilpotentUpperTriangular_m \left( \residueField \right)$ is the subgroup of $\SquareMat{m}{\residueField}$ consisting of upper triangular nilpotent matrices (i.e. upper triangular matrices with zeros on their diagonal).
	\end{enumerate}
\end{lem}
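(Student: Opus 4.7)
For part (1), the plan is to invoke the standard Bruhat-style counting formula for double cosets of a unipotent radical. Since $d$ is diagonal and the diagonal torus normalizes $\residueFieldUnipotentSubgroup{m}$, one has $d \residueFieldUnipotentSubgroup{m} d^{-1} = \residueFieldUnipotentSubgroup{m}$, so $\residueFieldUnipotentSubgroup{m} \antidiagPermutationMatrix{l} d \residueFieldUnipotentSubgroup{m} = \residueFieldUnipotentSubgroup{m} \antidiagPermutationMatrix{l} \residueFieldUnipotentSubgroup{m} \cdot d$, and since right multiplication by $d$ preserves cardinality, the count is independent of $d$. The standard formula $|\residueFieldUnipotentSubgroup{m} w \residueFieldUnipotentSubgroup{m}| = q^{\ell(w)} |\residueFieldUnipotentSubgroup{m}|$, valid for any permutation matrix $w$ with $\ell(w)$ the number of inversions, reduces matters to computing $\ell(\antidiagPermutation{l})$. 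Using the explicit formulas $\antidiagPermutation{l}(i) = m - l + i$ for $1 \leq i \leq l$ and $\antidiagPermutation{l}(l+j) = j$ for $1 \leq j \leq m-l$, a quick case analysis shows that the only inversions are pairs $(a, b)$ with $a \leq l < b$, giving $\ell(\antidiagPermutation{l}) = l(m-l)$. The elementary identity $\binom{m}{2} - \binom{l}{2} - \binom{m-l}{2} = l(m-l)$ then finishes part (1).

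For part (2), I would first translate the matrix-set description into explicit coordinates. A strictly lower triangular $X$ lies in $\antidiagPermutationMatrix{l} \NilpotentUpperTriangular_m(\residueField) \antidiagPermutationMatrix{l}^{-1}$ if and only if $\antidiagPermutationMatrix{l}^{-1} X \antidiagPermutationMatrix{l}$ is strictly upper triangular; since its $(k, k')$-entry equals $x_{\antidiagPermutation{l}(k), \antidiagPermutation{l}(k')}$, the condition, after substituting $i = \antidiagPermutation{l}(k)$ and $j = \antidiagPermutation{l}(k')$, becomes $x_{ij} = 0$ whenever $\antidiagPermutation{l}^{-1}(i) \geq \antidiagPermutation{l}^{-1}(j)$, which combined with $j < i$ is precisely the stated formulation. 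It then remains to count the free positions, namely pairs $(i, j)$ with $j < i$ and $\antidiagPermutation{l}^{-1}(j) > \antidiagPermutation{l}^{-1}(i)$. Using that $\antidiagPermutation{l}^{-1}$ sends $\{1, \ldots, m-l\}$ order-preservingly onto $\{l+1, \ldots, m\}$ and $\{m-l+1, \ldots, m\}$ onto $\{1, \ldots, l\}$, a four-case split on whether each of $i, j$ lies in $\{1, \ldots, m-l\}$ or in $\{m-l+1, \ldots, m\}$ shows that the only surviving configuration is $j \leq m-l < i$. This yields $l(m-l)$ free positions, hence cardinality $q^{l(m-l)}$, as claimed.

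Neither part presents a genuine obstacle; both ultimately reduce to the identity $\ell(\antidiagPermutation{l}) = l(m-l)$ matched against the binomial expression. The main care required is in part (2), where one must keep the distinction between $\antidiagPermutation{l}$ and $\antidiagPermutation{l}^{-1}$ straight throughout the combinatorial case analysis.
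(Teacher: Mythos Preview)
Your proposal is correct. Both parts are handled cleanly: the reduction to $\ell(\antidiagPermutation{l})$ in part~(1) via the standard Bruhat cell count, the coordinate translation in part~(2), and the case analysis identifying the free positions as exactly the pairs $(i,j)$ with $j\le m-l<i$ are all accurate, and the identity $\binom{m}{2}-\binom{l}{2}-\binom{m-l}{2}=l(m-l)$ ties everything together.

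Note that the paper itself does not supply a proof of this lemma at all: it is simply quoted from \cite[Lemma~2.34]{ye2018exterior}. Your argument is therefore a self-contained verification of a result the paper takes as input, and there is nothing to compare against in the paper's text.
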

\begin{rem}
	In \cite[Lemma 2.29]{ye2018exterior} the lemma is stated only for $d$ a diagonal matrix of a certain form, but its proof only uses the fact that $d$ is a diagonal matrix.
\end{rem}

\begin{lem} \label{lem:abs-of-ai-elements}
	Suppose that $a = \diag\left(a_1, \dots, a_m\right)$ is an invertible diagonal matrix, and $X \in \NilpotentLowerTriangular_m \left(\localField\right)$ is a lower nilpotent matrix, such that
	\begin{equation}\label{eq:element-in-support-of-whittaker-function}
		\evenPermutationMatrix \ShalikaUnipotentElement{X} \ShalikaDiagonalElement{a} = \lambda \cdot u \cdot \cyclicGroupGenerator{2m}^r \cdot k,
	\end{equation}
	where $\lambda \in \multiplicativegroup{\localField}$, $u \in \UnipotentSubgroup_{2m}$, $1 \le r \le 2m$, $k \in \maximalCompactSubgroup_{2m}$. Then
	\begin{enumerate}
		\item $r = 2l$ is even, for some $1 \le l \le m$.
		\item $\abs{a_1} = \dots = \abs{a_{m-l}} = \abs{\lambda}$.
		\item $\abs{a_{m-l+1}} = \dots = \abs{a_{m}} = \abs{\lambda} \cdot \abs{\uniformizer}$.
		\item $\uniformizerDiagonalMatrix{l}^{-1} X \uniformizerDiagonalMatrix{l} \in \NilpotentLowerTriangular_m\left(\integersring\right)$.
	\end{enumerate}
\end{lem}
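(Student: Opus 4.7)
The plan is to deduce (1)--(4) by matching two computations of the Iwasawa diagonal of $g := \evenPermutationMatrix \SmallShalikaUnipotentElement{X} \SmallShalikaDiagonalElement{a}$: one from the given factorization on the right-hand side, and one directly from the sparse structure of $g$.

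First I take determinants of both sides of the given identity. Using $\det \cyclicGroupGenerator{2m}^r = (-1)^r \modifiedUniformizer^r$, one obtains $2\sum_i v(a_i) = 2m\, v(\lambda) + r$, so $r$ is necessarily even; writing $r = 2l$ gives (1). To put $\lambda u \cyclicGroupGenerator{2m}^r k$ into Iwasawa form, I factor $\cyclicGroupGenerator{2m}^r = w \cdot d$, where $w \in \maximalCompactSubgroup_{2m}$ is the permutation matrix realizing $j \mapsto j - r \pmod{2m}$ and $d = \diag(\modifiedUniformizer \IdentityMatrix{r}, \IdentityMatrix{2m-r})$; conjugating, $d' := w d w^{-1} = \diag(\IdentityMatrix{2m-r}, \modifiedUniformizer \IdentityMatrix{r})$. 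Then $\lambda u \cyclicGroupGenerator{2m}^r k = (\lambda u d')(wk)$ with $\lambda u d' \in \borelSubgroup_{2m}$ and $wk \in \maximalCompactSubgroup_{2m}$. By uniqueness of the Iwasawa decomposition, the Iwasawa diagonal of $g$ has valuation $v(\lambda)$ at positions $1, \ldots, 2m-2l$ and $v(\lambda) + 1$ at positions $2m-2l+1, \ldots, 2m$.

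For the second computation I use the identity
\[
\sum_{i = 2m-k+1}^{2m} v(d_i) = \min_{|I| = k} v\bigl(\det g[\{2m-k+1,\ldots,2m\};\, I]\bigr),
\]
a consequence of Cauchy-Binet together with the integrality of the $\maximalCompactSubgroup_{2m}$-factor of the Iwasawa decomposition. The rows of $g$ are explicit: row $2j$ equals $a_j\, e_{m+j}^{T}$ (a single nonzero entry), while row $2j-1$ equals $a_j\, e_j^{T} + \sum_{p < j} X_{jp} a_p\, e_{m+p}^{T}$. So each $k \times k$ minor using the last $k$ rows expands as a sum over matchings of rows to distinct columns: every even row is forced into its unique nonzero column, while each odd row $2i-1$ may be matched either to its ``pivot'' column $i$ (contributing valuation $v(a_i)$) or to a column $m+p$ with $p < i$ not already taken (contributing $v(X_{ip}) + v(a_p)$).

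I then induct on $k$ from $1$ to $2m$, at each step equating the Iwasawa prediction with the minimum minor valuation. This produces either the valuation of a new $a_i$ (at odd $k = 2s - 1$ one extracts $v(a_{m-s+1})$) or new integrality bounds $v(X_{m-s+1, p}) + v(a_p) \ge v(a_{m-s+1})$ for all $p \le m-s$ (at even $k = 2s$). The previously derived $X$-bounds are precisely what forces the pivot matching to remain minimal at the next stage. At the conclusion, the collected valuations give (2) and (3), and the collected bounds $v(X_{ip}) \ge [i > m-l] - [p > m-l]$ are equivalent to $\uniformizerDiagonalMatrix{l}^{-1} X \uniformizerDiagonalMatrix{l} \in \NilpotentLowerTriangular_m(\integersring)$, proving (4). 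The main obstacle is the matching combinatorics: at each inductive stage one must verify that the previously established bounds suffice to dominate every non-pivot matching, which requires careful bookkeeping of which columns are still available and which pairs of indices have already been constrained.
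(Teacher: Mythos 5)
Your proof is correct, and it takes a genuinely different route from the paper's. The paper establishes parts (2), (3), (4) by conjugating by $\evenPermutationMatrix$ to isolate the factor $u_Z = \evenPermutationMatrix \SmallShalikaUnipotentElement{Z} \evenPermutationMatrix^{-1}$ with $Z = a^{-1}Xa$, and then quotes two structural results from Jacquet--Shalika (their Propositions~4 and~5 in Section~5 of \cite{jacquet11exterior}): Proposition~4 gives the sign constraints $\abs{t_i} \ge 1$ for odd $i$ and $\abs{t_i} \le 1$ for even $i$ on the Iwasawa diagonal of $u_Z$, which combined with the product constraints forces $\abs{t_i} = 1$; Proposition~5 then bounds $\Norm{Z}$ by $\prod_{i\text{ odd}}\abs{t_i}$, forcing integrality. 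You instead prove everything from scratch by a two-sided computation of the Iwasawa diagonal valuations of $g = \evenPermutationMatrix \SmallShalikaUnipotentElement{X}\SmallShalikaDiagonalElement{a}$: once directly from the right-hand side of \cref{eq:element-in-support-of-whittaker-function}, once via the minimum-minor identity applied to the sparse rows of $g$, with an alternating induction (odd $k$ yields a new $v(a_{m-s+1})$, even $k$ yields the bounds $v(X_{m-s+1,p}) + v(a_p) \ge v(a_{m-s+1})$). I checked the inductive bookkeeping: at odd $k = 2s-1$ the previously established bounds for $j \ge m-s+2$ exactly suffice to show the pivot matching is minimal; at even $k = 2s$ the pivot value matches the Iwasawa value by direct calculation, and the single-swap column set yields the new $X$-bound via the uniqueness of its matching; and the collected bounds are precisely $\uniformizerDiagonalMatrix{l}^{-1} X \uniformizerDiagonalMatrix{l} \in \NilpotentLowerTriangular_m(\integersring)$. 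The trade-off is clear: the paper's argument is short but opaque without the Jacquet--Shalika blackboxes, while yours is self-contained and elementary at the cost of a more intricate combinatorial verification. Both are valid; yours has pedagogical value as it exposes why the constraints on $a$ and $X$ emerge from the sparsity of $g$ alone.
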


\begin{proof}
	\begin{enumerate}
		\item[1)] Taking the absolute value of the determinant of both sides of \cref{eq:element-in-support-of-whittaker-function}, we get $\abs{\det a}^2 = \abs{\lambda}^{2m} \cdot \abs{\det \cyclicGroupGenerator{2m}}^r$, and since $\abs{\det \cyclicGroupGenerator{2m}} = \abs{-\modifiedUniformizer} = q^{-1}$, we must have that $r$ is even. Thus $r = 2l$, for some $1 \le l \le m$. Then $$\cyclicGroupGenerator{2m}^{2l} = \antiDiagTwo{\IdentityMatrix{2m - 2l}}{\modifiedUniformizer \IdentityMatrix{2l}} = \diagTwo{\IdentityMatrix{2m-2l}}{\modifiedUniformizer \IdentityMatrix{2l}} \antiDiagTwo{\IdentityMatrix{2m - 2l}}{\IdentityMatrix{2l}}.$$
		\item[2\&3)] Denote $Z = a^{-1} X a$, $u_Z = \evenPermutationMatrix \SmallShalikaUnipotentElement{Z} \evenPermutationMatrix^{-1}$. Denote $$b = \evenPermutationMatrix \ShalikaDiagonalElement{a} \evenPermutationMatrix^{-1} = \diag \left(a_1, a_1, \dots, a_m, a_m\right).$$  Then $ b u_Z \evenPermutationMatrix = \lambda  u \cyclicGroupGenerator{2m}^{2l} k$.
		
		Let $u_Z = n_Z t_Z k_Z$ be an Iwasawa decomposition ($n_Z \in \UnipotentSubgroup_{2m}$, $t_Z \in \diagonalSubgroup_{2m}$, $k_Z \in \maximalCompactSubgroup_{2m}$). Then we have $\lambda^{-1} b t_Z = \left(b n_Z^{-1} b^{-1} u\right)    \cyclicGroupGenerator{2m}^{2l} \left(k \evenPermutationMatrix^{-1} k_Z^{-1}\right)$. Denote $u' = b n_Z^{-1} b^{-1} u \in \UnipotentSubgroup_{2m}$. Then we get $$\smallDiagTwo{\IdentityMatrix{2m-2l}}{\left(\modifiedUniformizer\right)^{-1} \IdentityMatrix{2l}} u'^{-1} \lambda^{-1} b t_Z \in \maximalCompactSubgroup_{2m}.$$ Writing $t_Z = \diag\left(t_1, \dots, t_{2m}\right)$, we get that $\abs{\lambda}^{-1} \abs{a_i} \abs{t_{2i}} = 1$ and $\abs{\lambda}^{-1} \abs{a_i} \abs{t_{2i -1}} = 1$, for every $1 \le i \le m-l$, and that $\abs{\lambda}^{-1} \abs{a_i} \abs{t_{2i}} = \abs{\uniformizer}$ and $\abs{\lambda}^{-1} \abs{a_i} \abs{t_{2i -1}} = \abs{\uniformizer}$, for every $m-l+1 \le i \le m$. By \cite[Section 5, Proposition 4]{jacquet11exterior}, $\abs{t_i} \ge 1$ for odd $i$ and $\abs{t_i} \le 1$ for even $i$. Thus we get that $\abs{t_i} = 1$ for every $i$. Hence, $\abs{a_1} = \dots = \abs{a_{m-l}} = \abs{\lambda}$ and $\abs{a_{m-l+1}} = \dots = \abs{a_{m}} = \abs{\lambda} \cdot \abs{\uniformizer}$.
		\item[4)]By \cite[Section 5, Proposition 5]{jacquet11exterior}, there exists $\alpha > 0$, such that if $Z = \left(z_{ij}\right)$, then $\max_{1 \le i,j \le m} \abs{z_{ij}}^{\alpha} \le \prod_{\substack{1 \le i \le 2m\\
				i \text{ odd}}} \abs{t_i}$. This implies that $Z = a^{-1} X a \in \SquareMat{m}{\integersring}$ since $\abs{t_i} = 1$. We have $a = \lambda \cdot \uniformizerDiagonalMatrix{l} \cdot k'$, where $k' \in \GL{m}{\integersring} \cap \diagonalSubgroup_m = {\left(\multiplicativegroup{\integersring}\right)}^m$, and this implies $\uniformizerDiagonalMatrix{l}^{-1} X \uniformizerDiagonalMatrix{l} \in \SquareMat{m}{\integersring}$, and therefore in $\NilpotentLowerTriangular_m\left(\localField\right) \cap \SquareMat{m}{\integersring} = \NilpotentLowerTriangular_m\left(\integersring\right)$.
	\end{enumerate}
\end{proof}

\begin{lem} \label{lem:support-of-integrand}
	Let $g = ak$, where $a = \diag \left( a_1, \dots, a_m \right)$ is an invertible matrix, $k \in \GL{m}{\integersring}$, $\multiplicativeMeasure{g} = \rightHaarMeasureModulus{\borelSubgroup_m} \left(a\right) \multiplicativeMeasure{k} \prod_{i = 1}^m{\multiplicativeMeasure{a_i}}$, and $X \in \NilpotentLowerTriangular_m\left(\localField\right)$ be a lower triangular nilpotent matrix. If $$ \evenPermutationMatrix \ShalikaUnipotentElement{X} \ShalikaDiagonalElement{g} \evenPermutationMatrix^{-1} \in \UnipotentSubgroup_{2m} \modifiedCenterTimesProUnipotent_{2m},$$
	then there exists $1 \le l \le m$, such that $\evenPermutationMatrix \SmallShalikaUnipotentElement{X} \SmallShalikaDiagonalElement{g} \evenPermutationMatrix^{-1} \in \multiplicativegroup{\localField} \UnipotentSubgroup_{2m} \cyclicGroupGenerator{2m}^{2l} \proUnipotentRadical_{2m}$. Moreover, if $\evenPermutationMatrix \SmallShalikaUnipotentElement{X} \SmallShalikaDiagonalElement{g} \evenPermutationMatrix^{-1} \in \lambda \UnipotentSubgroup_{2m} \cyclicGroupGenerator{2m}^{2l} \proUnipotentRadical_{2m}$ for $1 \le l \le m$ and $\lambda \in \multiplicativegroup{\localField}$, then
	\begin{enumerate}
		\item \label{item:diagonal-element-form} $a = \lambda \uniformizerDiagonalMatrix{l} \diag \left(u_1, \dots, u_m\right)$, where $u_1, \dots , u_{m} \in \multiplicativegroup{\integersring}$, $\rightHaarMeasureModulus{\borelSubgroup_m} \left(a\right) = \rightHaarMeasureModulus{\borelSubgroup_m} \left( \uniformizerDiagonalMatrix{l} \right) = q^{-l \left(m - l\right)}$.
		\item Let $k'' = \diag \left(u_1, \dots, u_m\right) k $. Then $\quotientMap \left( k'' \right) \in \residueFieldUnipotentSubgroupDoubleCoset{m}{l}$,  $\multiplicativeMeasure{k} = \multiplicativeMeasure{k''}$. 
		\item \label{item:X-form-lower-triangular-nilpotent} If $\quotientMap\left(k''\right) \in \antidiagPermutationMatrix{l} \residueFieldUnipotentSubgroup{m}$, then $X = \uniformizerDiagonalMatrix{l} Z \uniformizerDiagonalMatrix{l}^{-1}$ and $ dX = \rightHaarMeasureModulus{\borelSubgroup_m} \left( \uniformizerDiagonalMatrix{l} \right) dZ $, where $Z \in \NilpotentLowerTriangular_m\left(\integersring\right)$ satisfies $\quotientMap \left(Z\right) \in \NilpotentLowerTriangular_m \left(\residueField\right) \cap \left( \antidiagPermutationMatrix{l} \NilpotentUpperTriangular_m \left(\residueField\right) \antidiagPermutationMatrix{l}^{-1} \right)$. Moreover in this case, $$\evenPermutationMatrix \ShalikaUnipotentElement{X} \ShalikaDiagonalElement{g} \evenPermutationMatrix^{-1} = \lambda \cyclicGroupGenerator{2m}^{2l} v,$$ where $v \in \GL{2m}{\integersring}$ satisfies $\quotientMap \left(v\right) \in \residueFieldUnipotentSubgroup{2m}$, $\quotientMap \left(v\right)$ has zeros right above its diagonal, and $v$ has zero at its bottom left corner.
	\end{enumerate}
	\begin{proof}
		\begin{enumerate}
			\item Suppose that \begin{equation}\label{eq:element-in-support-of-whittaker-function-2}
			\evenPermutationMatrix \ShalikaUnipotentElement{X} \ShalikaDiagonalElement{a} \ShalikaDiagonalElement{k}  \evenPermutationMatrix^{-1} = \lambda u \cyclicGroupGenerator{2m}^{r} k',
			\end{equation} where $\lambda \in \multiplicativegroup{\localField}$, $u \in \UnipotentSubgroup_{2m}\left(\localField\right)$, $r \in \zIntegers$, $k' \in \proUnipotentRadical_{2m}$. Since $\cyclicGroupGenerator{2m}^{2m} = \modifiedUniformizer \IdentityMatrix{2m}$, we may assume (by modifying $\lambda$) that $1 \le r \le 2m$. By \cref{lem:abs-of-ai-elements}, we have that $r = 2l$, $X = \uniformizerDiagonalMatrix{l} Z \uniformizerDiagonalMatrix{l}^{-1}$, where $Z \in \NilpotentLowerTriangular_m\left(\integersring\right)$, and $a = \lambda \uniformizerDiagonalMatrix{l} \cdot \diag\left(u_1, \dots, u_m\right) $, for some $u_1,\dots,u_m \in \multiplicativegroup{\integersring}$.
			\item Let $k'' = \diag\left(u_1, \dots, u_m\right) \cdot k$ and $\uniformizerDiagonalMatrix{l}' = \smallDiagTwo{\IdentityMatrix{2m - 2l}}{\modifiedUniformizer \IdentityMatrix{2l}} $. Using these notations and part \ref{item:diagonal-element-form}, we have that \begin{equation} \label{eq:element-in-support-of-whittaker-function-in-terms-of-Z-and-k''}
				\evenPermutationMatrix \ShalikaUnipotentElement{X} \ShalikaDiagonalElement{g} \evenPermutationMatrix^{-1} = \lambda \evenPermutationMatrix \ShalikaDiagonalElement{\uniformizerDiagonalMatrix{l}} \ShalikaUnipotentElement{Z} \ShalikaDiagonalElement{k''} \evenPermutationMatrix^{-1}.
			\end{equation}
			Since $\uniformizerDiagonalMatrix{l}' = \evenPermutationMatrix \SmallShalikaDiagonalElement{\uniformizerDiagonalMatrix{l}} \evenPermutationMatrix^{-1}$, we get from \cref{eq:element-in-support-of-whittaker-function-in-terms-of-Z-and-k''}
			\begin{equation} \label{eq:element-in-support-of-whittaker-function-in-terms-of-Z-and-k''-2}
			\evenPermutationMatrix \ShalikaUnipotentElement{X} \ShalikaDiagonalElement{g} \evenPermutationMatrix^{-1} = \lambda \uniformizerDiagonalMatrix{l}' \evenPermutationMatrix \ShalikaUnipotentElement{Z} \ShalikaDiagonalElement{k''} \evenPermutationMatrix^{-1}.
			\end{equation}
			Recall that $r = 2l$. Writing $\cyclicGroupGenerator{2m}^{2l} =  \uniformizerDiagonalMatrix{l}'  \smallAntiDiagTwo{\IdentityMatrix{2m - 2l}}{\IdentityMatrix{2l}}$, we get by combining \cref{eq:element-in-support-of-whittaker-function-2} and \cref{eq:element-in-support-of-whittaker-function-in-terms-of-Z-and-k''-2} that \begin{equation}\label{eq:whittaker-support-elements-are-integral}
			\permutationMatrix{2m} \ShalikaUnipotentElement{Z} \ShalikaDiagonalElement{k''} \evenPermutationMatrix^{-1} = \uniformizerDiagonalMatrix{l}'^{-1} u \cyclicGroupGenerator{2m}^{2l} k' = \left(\uniformizerDiagonalMatrix{l}'^{-1} u \uniformizerDiagonalMatrix{l}'\right)
			\antiDiagTwo{\IdentityMatrix{2m - 2l}}{\IdentityMatrix{2l}} k',
			\end{equation}
			which implies that $\uniformizerDiagonalMatrix{l}'^{-1} u \uniformizerDiagonalMatrix{l}' \in \UnipotentSubgroup_{2m}\left(\integersring\right)$, as $\smallAntiDiagTwo{\IdentityMatrix{2m - 2l}}{\IdentityMatrix{2l}}, k' \in \maximalCompactSubgroup_{2m}$, and the left hand side of \cref{eq:whittaker-support-elements-are-integral} is in $\maximalCompactSubgroup_{2m}$. Since $\uniformizerDiagonalMatrix{l}'^{-1} u \uniformizerDiagonalMatrix{l}' \in \UnipotentSubgroup_{2m}\left(\integersring\right) \subseteq \proUnipotentRadical_{2m}$ and $k' \in \proUnipotentRadical_{2m}$, we get from \cref{eq:whittaker-support-elements-are-integral} that $$ \quotientMap \left( \permutationMatrix{2m} \ShalikaUnipotentElement{Z} \ShalikaDiagonalElement{k''} \evenPermutationMatrix^{-1} \right) \in  \residueFieldUnipotentSubgroup{2m} \antiDiagTwo{\IdentityMatrix{2m - 2l}}{\IdentityMatrix{2l}} \residueFieldUnipotentSubgroup{2m}.$$
			Since $Z \in \NilpotentLowerTriangular_m\left(\integersring\right)$, $\quotientMap\left(Z\right) \in \NilpotentLowerTriangular_m\left(\residueField\right)$, and by applying \Cref{lem:support-finite-field}, we have that $\quotientMap \left(k''\right) \in \residueFieldUnipotentSubgroupDoubleCoset{m}{l}$.
			
			\item Assume that $\quotientMap\left(k''\right) \in \antidiagPermutationMatrix{l} \residueFieldUnipotentSubgroup{m}$, then by \Cref{lem:support-finite-field} we have $\quotientMap \left( Z \right) \in \NilpotentLowerTriangular_m \left(\residueField\right) \cap \left(\antidiagPermutationMatrix{l} \NilpotentUpperTriangular_m \left(\residueField\right) \antidiagPermutationMatrix{l}^{-1} \right)$, and $$ \quotientMap \left( \permutationMatrix{2m} \ShalikaUnipotentElement{Z} \ShalikaDiagonalElement{k''} \evenPermutationMatrix^{-1} \right) = \antiDiagTwo{\IdentityMatrix{2m - 2l}}{\IdentityMatrix{2l}} v',$$ where $v' \in \residueFieldUnipotentSubgroup{2m}$ is an upper triangular matrix, having zeros right above its diagonal. Therefore \begin{equation}\label{eq:lift-of-element-in-the-support-of-residue-field-whittaker-function}
				\permutationMatrix{2m} \ShalikaUnipotentElement{Z} \ShalikaDiagonalElement{k''} \evenPermutationMatrix^{-1} = \antiDiagTwo{\IdentityMatrix{2m - 2l}}{\IdentityMatrix{2l}} v,
			\end{equation} where $v \in \GL{2m}{\integersring}$ satisfies $\quotientMap \left(v\right) = v'$. Combining \cref{eq:lift-of-element-in-the-support-of-residue-field-whittaker-function}, \cref{eq:element-in-support-of-whittaker-function-in-terms-of-Z-and-k''-2} and the fact that $\cyclicGroupGenerator{2m}^{2l} = \uniformizerDiagonalMatrix{l}' \smallAntiDiagTwo{\IdentityMatrix{2m - 2l}}{\IdentityMatrix{2l}}$, we get  $$ \evenPermutationMatrix \ShalikaUnipotentElement{X} \ShalikaDiagonalElement{g} \evenPermutationMatrix^{-1} = \lambda \uniformizerDiagonalMatrix{l}' \antiDiagTwo{\IdentityMatrix{2m - 2l}}{\IdentityMatrix{2l}} v  = \lambda \cyclicGroupGenerator{2m}^{2l} v.$$
			Finally, suppose that $l < m$. Note that a non-zero scalar multiple of the last row of $v$ appears as the $2m - 2l$ row of $\evenPermutationMatrix \SmallShalikaUnipotentElement{X} \SmallShalikaDiagonalElement{g} \evenPermutationMatrix^{-1}$. The $\left(2m - 2l , 1\right)$ coordinate of $\evenPermutationMatrix \SmallShalikaUnipotentElement{X} \SmallShalikaDiagonalElement{g} \evenPermutationMatrix^{-1}$ is the $\left(2m - l, 1\right)$ coordinate of $\SmallShalikaUnipotentElement{X} \SmallShalikaDiagonalElement{g}$, and this is zero, as $2m - l > m$.
			If $l = m$, then $\cyclicGroupGenerator{2m}^{2m} = \modifiedUniformizer \IdentityMatrix{2m}$, and therefore the last row of $v$ is a scalar multiple of the last row of $\evenPermutationMatrix \SmallShalikaUnipotentElement{X} \SmallShalikaDiagonalElement{g} \evenPermutationMatrix^{-1}$, which has zero as its first coordinate.
		\end{enumerate}
	\end{proof}
\end{lem}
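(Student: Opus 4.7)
The plan is to extract structural information from the hypothesized double-coset decomposition, then reduce modulo $\maximalideal$ and invoke the residue-field analogue \Cref{lem:support-finite-field}. First, since $\modifiedCenterTimesProUnipotent_{2m} = \grpGeneratedBy{\cyclicGroupGenerator{2m}} \centerTimesProUnipotent_{2m}$, the hypothesis allows us to write
$$\evenPermutationMatrix \ShalikaUnipotentElement{X} \ShalikaDiagonalElement{g} \evenPermutationMatrix^{-1} = \lambda u \cyclicGroupGenerator{2m}^r k'$$
for some $\lambda \in \multiplicativegroup{\localField}$, $u \in \UnipotentSubgroup_{2m}$, $r \in \zIntegers$, and $k' \in \proUnipotentRadical_{2m}$; because $\cyclicGroupGenerator{2m}^{2m} = \modifiedUniformizer \IdentityMatrix{2m}$ is a central scalar that can be absorbed into $\lambda$, we may further assume $1 \le r \le 2m$. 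Moving the compact factors $\evenPermutationMatrix^{-1}$ and $\SmallShalikaDiagonalElement{k}$ to the right-hand side puts the identity into the precise form required by \Cref{lem:abs-of-ai-elements}, which I would then apply to the diagonal matrix $a = \diag(a_1, \dots, a_m)$.

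\Cref{lem:abs-of-ai-elements} outputs three pieces of data: $r = 2l$ for some $1 \le l \le m$; the absolute-value pattern $\abs{a_1} = \dots = \abs{a_{m-l}} = \abs{\lambda}$ and $\abs{a_{m-l+1}} = \dots = \abs{a_m} = \abs{\lambda}\abs{\uniformizer}$; and $\uniformizerDiagonalMatrix{l}^{-1} X \uniformizerDiagonalMatrix{l} \in \NilpotentLowerTriangular_m(\integersring)$. The absolute-value data yield $a = \lambda \uniformizerDiagonalMatrix{l} \diag(u_1,\dots,u_m)$ with units $u_i \in \multiplicativegroup{\integersring}$, and counting pairs $i \le m-l < j$ gives $\rightHaarMeasureModulus{\borelSubgroup_m}(\uniformizerDiagonalMatrix{l}) = q^{-l(m-l)}$. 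Setting $Z = \uniformizerDiagonalMatrix{l}^{-1} X \uniformizerDiagonalMatrix{l}$ and $k'' = \diag(u_1,\dots,u_m) k$, the measure identities $\multiplicativeMeasure{k} = \multiplicativeMeasure{k''}$ and $dX = \rightHaarMeasureModulus{\borelSubgroup_m}(\uniformizerDiagonalMatrix{l}) \, dZ$ follow from translation-invariance of the multiplicative Haar measure on $\maximalCompactSubgroup_m$ and the scaling behavior of additive Haar measure on $\NilpotentLowerTriangular_m$.

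For the double-coset conclusion I would next rewrite
$$\evenPermutationMatrix \ShalikaUnipotentElement{X} \ShalikaDiagonalElement{g} \evenPermutationMatrix^{-1} = \lambda \uniformizerDiagonalMatrix{l}' \cdot \evenPermutationMatrix \ShalikaUnipotentElement{Z} \ShalikaDiagonalElement{k''} \evenPermutationMatrix^{-1},$$
where $\uniformizerDiagonalMatrix{l}' = \evenPermutationMatrix \SmallShalikaDiagonalElement{\uniformizerDiagonalMatrix{l}} \evenPermutationMatrix^{-1} = \smallDiagTwo{\IdentityMatrix{2m-2l}}{\modifiedUniformizer \IdentityMatrix{2l}}$. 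Comparing with the decomposition from the first paragraph and using the factorization $\cyclicGroupGenerator{2m}^{2l} = \uniformizerDiagonalMatrix{l}' \smallAntiDiagTwo{\IdentityMatrix{2m-2l}}{\IdentityMatrix{2l}}$, I would deduce that $\evenPermutationMatrix \SmallShalikaUnipotentElement{Z} \SmallShalikaDiagonalElement{k''} \evenPermutationMatrix^{-1}$ lies in $\UnipotentSubgroup_{2m}(\integersring) \smallAntiDiagTwo{\IdentityMatrix{2m-2l}}{\IdentityMatrix{2l}} \proUnipotentRadical_{2m}$; the needed integrality $\uniformizerDiagonalMatrix{l}'^{-1} u \uniformizerDiagonalMatrix{l}' \in \UnipotentSubgroup_{2m}(\integersring)$ is forced because the resulting equality expresses this conjugate as a product of elements of $\maximalCompactSubgroup_{2m}$. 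Reducing modulo $\maximalideal$ and invoking \Cref{lem:support-finite-field}(1) then yields $\quotientMap(k'') \in \residueFieldUnipotentSubgroupDoubleCoset{m}{l}$.

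For the refined statement under the assumption $\quotientMap(k'') \in \antidiagPermutationMatrix{l} \residueFieldUnipotentSubgroup{m}$, I would apply the second conclusion of \Cref{lem:support-finite-field} together with its ``furthermore'' clause to obtain both the vanishing pattern of $\quotientMap(Z)$ and the shape $\smallAntiDiagTwo{\IdentityMatrix{2m-2l}}{\IdentityMatrix{2l}} v'$ modulo $\maximalideal$. Lifting to $v \in \GL{2m}{\integersring}$ and premultiplying by $\lambda \uniformizerDiagonalMatrix{l}'$ then produces $\evenPermutationMatrix \SmallShalikaUnipotentElement{X} \SmallShalikaDiagonalElement{g} \evenPermutationMatrix^{-1} = \lambda \cyclicGroupGenerator{2m}^{2l} v$ with the claimed properties of $v$. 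The vanishing of the bottom-left entry of $v$ I would read off by tracking coordinates through $\evenPermutationMatrix$: for $l < m$ the relevant coordinate corresponds to the $(2m-l,1)$ entry of $\SmallShalikaUnipotentElement{X} \SmallShalikaDiagonalElement{g}$, which vanishes since the $X$-block occupies only the upper-right $m \times m$ corner and $2m - l > m$; for $l = m$ the factor $\cyclicGroupGenerator{2m}^{2m} = \modifiedUniformizer \IdentityMatrix{2m}$ is scalar, so the claim is immediate. The principal obstacle I anticipate lies precisely in this last step of bookkeeping: keeping track of how conjugation by $\uniformizerDiagonalMatrix{l}'$ interacts with the unipotent factor $u$, and then isolating the correct permuted coordinate responsible for the vanishing at the bottom-left.
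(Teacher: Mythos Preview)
Your proposal is correct and follows essentially the same approach as the paper's own proof: both reduce to \Cref{lem:abs-of-ai-elements} to extract $r=2l$ and the shape of $a$ and $X$, then factor out $\uniformizerDiagonalMatrix{l}'$, force the integrality of $\uniformizerDiagonalMatrix{l}'^{-1} u \uniformizerDiagonalMatrix{l}'$ by comparing with integral matrices, reduce modulo $\maximalideal$, and apply \Cref{lem:support-finite-field}; the final coordinate chase for the bottom-left entry of $v$ is likewise identical. Your anticipated ``principal obstacle'' is exactly the bookkeeping the paper carries out, and you have described it accurately.
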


\subsection{The even case} \label{subsection:even-computation}

In this section, we compute the twisted exterior square factors for the even case $n = 2m$.

\begin{thm}\label{thm:exterior-square-gamma-factor-for-even-case} Let $\localFieldRepresentation$ be a simple supercuspidal representation of $\GL{2m}{\localField}$ with central character $\multiplicativeCharacter$, associated with the data $\left(t_0, \rootOfUniformizer\right)$. 
	Let $\unitaryDetCharacter : \multiplicativegroup{\localField} \rightarrow \multiplicativegroup{\cComplex}$ be a unitary tamely ramified character, i.e. $\unitaryDetCharacter \restriction_{1 + \maximalideal} = 1$. Denote $\uniformizerValueForTwistedGamma = \rootOfUniformizer^2 \cdot \unitaryDetCharacter\left( \left(-1\right)^{m-1} \modifiedUniformizer \right)$. Let $\twistedMultiplicativeExteriorCharacter : \multiplicativegroup{\localField} \rightarrow \multiplicativegroup{\cComplex}$ be the character defined by $\twistedMultiplicativeExteriorCharacter \left( \uniformizer^j u \right) = \uniformizerValueForTwistedGamma^{j} \multiplicativeCharacter \left( u \right) \unitaryDetCharacter \left(u\right)^m$, for $j \in \zIntegers$, $u \in \multiplicativegroup{\integersring}$. Then $$\twistedGammaFactorOfLocalField{s} = \left( \uniformizerValueForTwistedGamma q^{-\left(s - \frac{1}{2}\right)} \right)^{m - 1} \gammaFactorOfCharacter{s}{ \twistedMultiplicativeExteriorCharacter}.$$ Explicitly,
	\begin{enumerate}
		\item If $\left(\multiplicativeCharacter \cdot \unitaryDetCharacter^m\right) \restriction_{\multiplicativegroup{\integersring}} \ne 1$, then $$ \twistedGammaFactorOfLocalField{s} = \left(\uniformizerValueForTwistedGamma q^{- \left(s - \frac{1}{2}\right)} \right)^{m-1}  \frac{1}{\sqrt{q}} \sum_{\lambda \in \multiplicativegroup{\residueField}}{\fieldCharacter \left(\lambda\right) \multiplicativeCharacter \left(\lambda^{-1}\right) \unitaryDetCharacter \left(\lambda^{-m}\right)}.$$ In this case $\TwistedExteriorSquareLFunction{s}{\localFieldRepresentation}{\unitaryDetCharacter} = 1$, $\TwistedEpsilonFactorOfLocalField{s} = \twistedGammaFactorOfLocalField{s}$.
		\item If $\left(\multiplicativeCharacter \cdot \unitaryDetCharacter^m \right)\restriction_{\multiplicativegroup{\integersring}} = 1$, then $$\twistedGammaFactorOfLocalField{s} = \left(\uniformizerValueForTwistedGamma q^{-\left(s - \frac{1}{2}\right)}\right)^{m-2} \frac{1 - \uniformizerValueForTwistedGamma  q^{-s}}{1 - \uniformizerValueForTwistedGamma^{-1} q^{- \left(1-s\right)}}.$$ In this case $\TwistedExteriorSquareLFunction{s}{\localFieldRepresentation}{\unitaryDetCharacter} = \frac{1}{1 - \uniformizerValueForTwistedGamma q^{-s}}$, $$\TwistedEpsilonFactorOfLocalField{s} = \uniformizerValueForTwistedGamma^{m - 2} q^{-\left(m - 2\right)\left(s - \frac{1}{2}\right)}.$$
	\end{enumerate}
\end{thm}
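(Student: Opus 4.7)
The plan is to evaluate both sides of the functional equation in \Cref{thm:functional-equation-gamma-factor} for a carefully chosen pair: take $W' = \localFieldRepresentation(\evenPermutationMatrix^{-1})W$, where $W$ is the explicit Whittaker function of \eqref{eq:simple-supercuspidal-whittaker-function}, and take $\phi = \indicatorFunction{\integersring^m}$, whose Fourier transform is $q^{m/2}\indicatorFunction{\maximalideal^m}$. With these choices, the integrand of $\JSOfLocalFieldRepresentation{s}{W'}{\phi}$ evaluates $W$ at $\evenPermutationMatrix\SmallShalikaUnipotentElement{X}\SmallShalikaDiagonalElement{g}\evenPermutationMatrix^{-1}$, which must lie in $\UnipotentSubgroup_{2m}\modifiedCenterTimesProUnipotent_{2m}$, so that \Cref{lem:support-of-integrand} applies directly.

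Using the Iwasawa decomposition $g = ak$ on the outer integral, I decompose $J = \sum_{l=1}^m J_l$, where $J_l$ collects the contribution from the cell $\multiplicativegroup{\localField}\UnipotentSubgroup_{2m}\cyclicGroupGenerator{2m}^{2l}\proUnipotentRadical_{2m}$. For each $l$, \Cref{lem:support-of-integrand} constrains $a = \lambda\uniformizerDiagonalMatrix{l}\diag(u_1,\ldots,u_m)$ with $\lambda\in\multiplicativegroup{\localField}$ and $u_i\in\multiplicativegroup{\integersring}$, forces $\quotientMap(\diag(u)k)$ into $\residueFieldUnipotentSubgroup{m}\antidiagPermutationMatrix{l}\residueFieldUnipotentSubgroup{m}$, and forces $X = \uniformizerDiagonalMatrix{l}Z\uniformizerDiagonalMatrix{l}^{-1}$ with $Z$ in a prescribed subspace of $\NilpotentLowerTriangular_m(\integersring)$. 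After using the $\UnipotentSubgroup_{2m}$-equivariance of $W$ to reduce to the inner cell $\antidiagPermutationMatrix{l}\residueFieldUnipotentSubgroup{m}$, part~(3) of that lemma gives the integrand on its support as $W(\lambda\cyclicGroupGenerator{2m}^{2l}v) = \multiplicativeCharacter(\lambda)\rootOfUniformizer^{2l}$, since the residue class of $v$ has zeros above its diagonal and $v$ itself vanishes at its $(2m,1)$-entry, forcing $\affineGenericCharacter(v) = 1$.

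Combining the finite-field cell counts of \Cref{lem:support-size-finite-field}, the modulus $\rightHaarMeasureModulus{\borelSubgroup_m}(\uniformizerDiagonalMatrix{l}) = q^{-l(m-l)}$, the factor $\abs{\det g}^s\unitaryDetCharacter(\det g) = \abs{\lambda}^{ms}q^{-ls}\unitaryDetCharacter(\lambda)^m\unitaryDetCharacter(\modifiedUniformizer)^l\unitaryDetCharacter(u_1\cdots u_m)$, and the support constraint $\phi(\lastrowvector g) = \indicatorFunction{\abs{\lambda}\le q}$, each $J_l$ collapses to a product of a one-dimensional $\lambda$-integral and harmonic integrals in the $u_i$ enforcing the ramification of $\unitaryDetCharacter$. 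Regrouping the $l$-dependent monomials into powers of $\uniformizerValueForTwistedGamma = \rootOfUniformizer^2\unitaryDetCharacter((-1)^{m-1}\modifiedUniformizer)$ and $q^{-(s-1/2)}$, the sum over $l$ produces a closed rational form. I would carry out the same computation for $\TwistedDualJSOfLocalFieldRepresentation{s}{W'}{\phi}$ via \Cref{prop:js-dual-formulas}(1), with $\FourierTransformWithRespectToCharacter{\phi}{\fieldCharacter}$ playing the role that $\phi$ plays for $J$; this gives a formula of the dual shape with $s$ replaced by $1-s$ and $\unitaryDetCharacter$ by $\unitaryDetCharacter^{-1}$.

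Taking the ratio via \Cref{thm:functional-equation-gamma-factor} yields $\twistedGammaFactorOfLocalField{s}$; comparing with the classical Tate formula identifies this as $(\uniformizerValueForTwistedGamma q^{-(s-1/2)})^{m-1}$ times $\gammaFactorOfCharacter{s}{\twistedMultiplicativeExteriorCharacter}$, with the exponent $m-1$ emerging because $m-1$ of the $m$ cell contributions carry only monomial factors while the remaining single cell carries the nontrivial $\lambda$-integral matching Tate's. Finally, \Cref{lem:computation-of-exterior-square-factors} splits the resulting rational function: when $(\multiplicativeCharacter\unitaryDetCharacter^m)\restriction_{\multiplicativegroup{\integersring}}\ne 1$, Tate's $\gamma$ is a pure Gauss-sum monomial, so $L = 1$ and $\epsilon$ absorbs everything; when $(\multiplicativeCharacter\unitaryDetCharacter^m)\restriction_{\multiplicativegroup{\integersring}} = 1$, Tate's $\gamma$ has the standard unramified shape $(1-\uniformizerValueForTwistedGamma q^{-s})/(1-\uniformizerValueForTwistedGamma^{-1}q^{-(1-s)})$, producing the stated $L$- and $\epsilon$-factors. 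The main technical obstacle will be the careful bookkeeping of signs $\unitaryDetCharacter((-1)^{m-1})$, powers of $\unitaryDetCharacter(\modifiedUniformizer)$ and $\rootOfUniformizer$, and measure normalizations, so that $\uniformizerValueForTwistedGamma$ and the exponent $m-1$ emerge exactly as stated.
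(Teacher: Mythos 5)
Your overall strategy is the paper's: conjugate the explicit Whittaker function by $\evenPermutationMatrix^{-1}$, decompose via the Iwasawa decomposition and the support lemma into cells indexed by $l$, evaluate each cell, compare the ratio with Tate's functional equation, and finish with \Cref{lem:computation-of-exterior-square-factors}. The genuine gap is your choice of Schwartz function. With $\phi = \mathbf{1}_{\integersring^m}$, the value $\phi\left(\lastrowvector g\right)$ on the support of the integrand depends only on $\abs{\lambda}$ (it is the condition $\abs{\lambda}\le q$), so in every cell the unit part of the $\lambda$-integral factors out as $\int_{\multiplicativegroup{\integersring}}\multiplicativeCharacter\left(u_0\right)\unitaryDetCharacter\left(u_0\right)^m\multiplicativeMeasure{u_0}$, which vanishes whenever $\left(\multiplicativeCharacter\cdot\unitaryDetCharacter^m\right)\restriction_{\multiplicativegroup{\integersring}}\ne 1$. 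The same happens on the dual side, since $\FourierTransformWithRespectToCharacter{\phi}{\fieldCharacter} = q^{m/2}\mathbf{1}_{\maximalideal^m}$ again depends only on $\abs{\lambda}$. Hence in the ramified case both $J$ and $\tilde{J}$ are identically zero for your data, the functional equation reads $0=\gamma\cdot 0$, and you cannot extract $\twistedGammaFactorOfLocalField{s}$ --- which is exactly case (1) of the theorem. Your structural claim that ``the remaining single cell carries the nontrivial $\lambda$-integral matching Tate's'' is likewise a feature your $\phi$ does not have: with $\mathbf{1}_{\integersring^m}$ all $m$ cells of $J$ contribute the same geometric series, and on the dual side all of $\tilde{J}_1,\dots,\tilde{J}_m$ are nonzero rather than only $\tilde{J}_m$.

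The paper avoids this by taking $\phi\left(x\right)=\fieldCharacter\left(-\quotientMap\left(x_1\right)\right)$ for $x\in\integersring^m$ and $0$ otherwise, as in \cref{eqn:definition_of_phi}. The additive twist in the first coordinate turns the $l=1$, $j=-1$ term into the Gauss sum $\sum_{\lambda\in\multiplicativegroup{\residueField}}\multiplicativeCharacter\left(\lambda\right)\unitaryDetCharacter\left(\lambda\right)^m\fieldCharacter\left(-\lambda\right)$, which is nonzero precisely in the ramified case; and its Fourier transform $q^{m/2}\,\indicatorFunction{\firstrowvector}\circ\quotientMap$ on $\integersring^m$ collapses the dual integral to the single constant term $\tilde{J}_m$. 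If you replace your test function by this one (or any $\phi$ whose restriction to the relevant lines is not $\multiplicativegroup{\integersring}$-invariant), the rest of your outline goes through essentially as in the paper.
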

\begin{proof}
	We will compute the twisted exterior square gamma factor by computing the twisted Jacquet-Shalika integrals $\TwistedJSOfLocalFieldRepresentation{s}{\localFieldRepresentation \left(\evenPermutationMatrix^{-1}\right)W}{\phi}$ and $\TwistedDualJSOfLocalFieldRepresentation{s}{\localFieldRepresentation \left(\evenPermutationMatrix^{-1}\right)W}{\phi}$, where $W$ is the Whittaker function introduced in \Cref{subsubsection:simple-supercuspidal}, and $\phi : \localField^m \rightarrow \cComplex$ is the function defined by \begin{equation}\label{eqn:definition_of_phi}
    \phi \left(x\right) = \begin{cases}
	\fieldCharacter \left(-\quotientMap\left(x_1\right) \right) & x = \left(x_1, \dots, x_m\right) \in \integersring^m, \\
	0                                                           & \text{otherwise}
	\end{cases}.
    \end{equation}
    Then $$\FourierTransformWithRespectToCharacter{\phi}{\fieldCharacter} \left(x\right) = \begin{cases}
	q^{\frac{m}{2}} \indicatorFunction{\firstrowvector}\left(\quotientMap\left(x\right)\right) & x \in \integersring^m, \\
	0                                                                                          & \text{otherwise}
	\end{cases},$$ where $\indicatorFunction{\firstrowvector}\left(x\right)$ is the indicator function of $\firstrowvector = \left(1,0,\dots,0\right) \in \residueField^m$. 
	
	By the Iwasawa decomposition, we have that $\TwistedJSOfLocalFieldRepresentation{s}{\localFieldRepresentation \left(\evenPermutationMatrix^{-1}\right)W}{\phi}$ is given by
	
	\begin{equation}\label{eq:iwasawa-decomposition-for-jacquet-shalika-integral-even}
		\begin{split}
		J = \int& W \left( \evenPermutationMatrix \ShalikaUnipotentElement{X} \ShalikaDiagonalElement{ak} \evenPermutationMatrix^{-1} \right) \abs{\det a}^s \unitaryDetCharacter\left(\det \left(ak\right)\right)\\
		&\cdot \phi\left( \lastrowvector ak \right) \fieldCharacter\left(- \trace X \right) \rightHaarMeasureModulus{\borelSubgroup_m}\left(a\right) dX \multiplicativeMeasure{a} \multiplicativeMeasure{k},
		\end{split}
	\end{equation}
	where $W$ is the Whittaker function defined in \Cref{subsubsection:simple-supercuspidal}, given by \cref{eq:simple-supercuspidal-whittaker-function}, $X$ is integrated over $\lquot{\UpperTriangularAdditive_m}{\SquareMat{m}{\localField}}$, $k$ is integrated over $\maximalCompactSubgroup_m = \GL{m}{\integersring}$ and $a$ is integrated over the diagonal subgroup $\diagonalSubgroup_m$ of $\GL{m}{\localField}$.
	
	Denote by $\weylGroup{m}$ the group of $m \times m$ permutation matrices. By the Bruhat decomposition for $\GL{m}{\residueField}$, we have the disjoint union $$\GL{m}{\residueField} = \bigsqcup_{\substack{	w \in \weylGroup{m}\\
			d_0 \in \diagonalSubgroup_m\left(\residueField\right)}}{ \residueFieldUnipotentSubgroup{m} wd_0  \residueFieldUnipotentSubgroup{m} }.$$
	We decompose each of the double cosets of the disjoint union into a disjoint union of left cosets: given $w \in \weylGroup{m}$, $d_0 \in \diagonalSubgroup_m\left(\residueField\right)$, we can write
	$$\residueFieldUnipotentSubgroup{m} w d_0  \residueFieldUnipotentSubgroup{m} = \bigsqcup_{u_0 \in C_{wd_0}}{u_0 w d_0 \residueFieldUnipotentSubgroup{m}},$$
	where $C_{wd_0} \subseteq \residueFieldUnipotentSubgroup{m}$ is a subset of $\residueFieldUnipotentSubgroup{m}$ such that the map \begin{align*}
		C_{wd_0}  &\rightarrow \left\{ u_0 w d_0 \residueFieldUnipotentSubgroup{m} \mid u_0 \in \residueFieldUnipotentSubgroup{m} \right\},\\
		u_0 & \mapsto u_0 w d_0 \residueFieldUnipotentSubgroup{m}
	\end{align*} is a bijection. We may assume without loss of generality that $\IdentityMatrix{m} \in C_{w d_0}$.
	We have that $\sizeof{C_{w d_0}} = \frac{\sizeof{\residueFieldUnipotentSubgroup{m} w d_0 \residueFieldUnipotentSubgroup{m}}}{\sizeof{\residueFieldUnipotentSubgroup{m}}}$.
	
	We obtain the following decomposition:
	$$ \GL{m}{\residueField} = \bigsqcup_{\substack{w \in \weylGroup{m}\\
	d_0 \in \diagonalSubgroup_m\left(\residueField\right)}} \bigsqcup_{u_0 \in C_{wd_0}}{u_0 w d_0 \residueFieldUnipotentSubgroup{m}}.$$
	Since $\quotientMap^{-1}\left( \GL{m}{\residueField} \right) = \GL{m}{\integersring}$, we can lift the above decomposition to $$ \GL{m}{\integersring} = \bigsqcup_{\substack{w \in \weylGroup{m}\\
			d \in D_m}} \bigsqcup_{u \in C_{wd}}{uwd \quotientMap^{-1}\left( \residueFieldUnipotentSubgroup{m} \right) },$$
	where $D_m \subseteq \diagonalSubgroup_m \cap \GL{m}{\integersring} = \left(\multiplicativegroup{\integersring}\right)^m$ is a set of representatives for the inverse image $\quotientMap^{-1}\left( \diagonalSubgroup_m\left(\residueField\right) \right)$ (i.e. $D_m \subseteq \quotientMap^{-1}\left(\diagonalSubgroup_m\left(\residueField\right)\right)$ and $\quotientMap \restriction_{D_m} : D_m \rightarrow \diagonalSubgroup_m\left(\residueField\right)$ is a bijection), and for $d \in D_m$ with $\quotientMap\left(d\right) = d_0$, $C_{wd} \subseteq \UnipotentSubgroup_m\left(\integersring\right)$ is a set of representatives for the inverse image $\quotientMap^{-1}\left( C_{w d_0} \right)$ (i.e. $C_{wd} \subseteq \quotientMap^{-1}\left(C_{w d_0}\right)$ and $\quotientMap\restriction_{C_{wd}} : C_{wd} \rightarrow C_{w d_0}$ is a bijection). Without loss of generality, we may assume that the identity matrix belongs to $D_m$, and also  belongs to $C_{wd}$, for every $w \in \weylGroup{m}$ and $d \in D_m$.
	
	Using this decomposition for $\maximalCompactSubgroup_m$ in \cref{eq:iwasawa-decomposition-for-jacquet-shalika-integral-even}, we decompose the integral $J$ into a sum of integrals $$J = \sum_{\substack{w \in \weylGroup{m}\\
			d \in D_m}}\sum_{u \in C_{wd}}{J_{wd,u}},$$ where \begin{align*}
		J_{wd,u} = \int& W \left( \evenPermutationMatrix \ShalikaUnipotentElement{X} \ShalikaDiagonalElement{a uwd k} \evenPermutationMatrix^{-1} \right) \abs{\det a}^s \unitaryDetCharacter\left(\det \left(a uwd k\right)\right)\\
		&\cdot \phi\left( \lastrowvector a uwd k \right) \fieldCharacter\left(- \trace X \right) \rightHaarMeasureModulus{\borelSubgroup_m}\left(a\right) dX \multiplicativeMeasure{a} \multiplicativeMeasure{k},
	\end{align*}
	where $X$ is integrated over $\lquot{\UpperTriangularAdditive_m}{\SquareMat{m}{\localField}}$, $k$ is integrated over $\proUnipotentRadical_m = \quotientMap^{-1}\left(\residueFieldUnipotentSubgroup{m}\right)$, and $a$ is integrated over $\diagonalSubgroup_m$. Writing $au = aua^{-1} \cdot a$, we have that $a u a^{-1} \in \UnipotentSubgroup_m$, and since the Jacquet-Shalika integrand is invariant under $\lquot{\UnipotentSubgroup_m}{\GL{m}{\localField}}$, we have $J_{wd,\IdentityMatrix{m}} = J_{wd,u}$ for any $u \in C_{wd}$. Denote $J_{wd} = J_{wd,\IdentityMatrix{m}}$, then we have
	$$J = \sum_{\substack{w \in \weylGroup{m}\\
			d \in D_m}} \sizeof{C_{wd}} J_{wd} = \sum_{\substack{w \in \weylGroup{m}\\
			d \in D_m}} \frac{\sizeof{\residueFieldUnipotentSubgroup{m} w \quotientMap\left(d\right) \residueFieldUnipotentSubgroup{m}}}{\sizeof{\residueFieldUnipotentSubgroup{m}}} J_{wd}.$$
	
	Using the isomorphism $\lquot{\UpperTriangularAdditive_m}{\SquareMat{m}{\localField}} \cong \NilpotentLowerTriangular_m$, we can write 
	\begin{equation}\label{eq:expresion-for-j-w-d-even-case}
		\begin{split}
		J_{wd} = \int& W \left( \evenPermutationMatrix \ShalikaUnipotentElement{X} \ShalikaDiagonalElement{a wd k} \evenPermutationMatrix^{-1} \right) \abs{\det a}^s \unitaryDetCharacter\left(\det \left(a wd k\right)\right)\\
		&\cdot \phi\left( \lastrowvector a wd k \right) \rightHaarMeasureModulus{\borelSubgroup_m}\left(a\right) dX \multiplicativeMeasure{a} \multiplicativeMeasure{k},
		\end{split}
	\end{equation}
	where the integration is the same as in $J_{wd,u}$, except that this time $X$ is integrated on $\NilpotentLowerTriangular_m$.
	
	By \Cref{lem:support-of-integrand}, $J_{wd} = 0$ unless $w = \antidiagPermutationMatrix{l}$ for some $1 \le l \le m$. In this case by \Cref{lem:support-of-integrand}, we have that the integrand of $J_{\antidiagPermutationMatrix{l} d}$ is supported on $\uniformizerDiagonalMatrix{l} \cdot \multiplicativegroup{\localField} \cdot \diagonalSubgroup_m\left(\integersring\right)$, where $\diagonalSubgroup_m\left(\integersring\right) = \diagonalSubgroup_m \cap \GL{m}{\integersring} \isomorphic \left(\multiplicativegroup{\integersring}\right)^m$. We translate $a$ by $d_l$ and write down the expression for the Haar measure for the subgroup $\multiplicativegroup{\localField} \cdot \diagonalSubgroup_m\left(\integersring\right)$: \begin{equation}\label{eq:parameterization-of-a-even-case}
		\begin{aligned}
		a &= \uniformizerDiagonalMatrix{l} \lambda \diag\left(u_1,\dots,u_m\right),  \text{ where } \lambda \in \multiplicativegroup{\localField},u_1,\dots,u_m \in \multiplicativegroup{\integersring}. \\
		\multiplicativeMeasure{a} &= \multiplicativeMeasure{\lambda} \prod_{i=1}^{m}{\multiplicativeMeasure{u_i}},\\
		\rightHaarMeasureModulus{\borelSubgroup_m}\left(a\right) &= \rightHaarMeasureModulus{\borelSubgroup_m}\left(\uniformizerDiagonalMatrix{l}\right) = q^{-l \left(m-l\right)}.
		\end{aligned}
	\end{equation} Denote \begin{equation}\label{eq:definition-of-k''-even-case}
		k'' = \diag\left(u_1,\dots,u_m\right) \weylElement{l} d k.
	\end{equation} By \Cref{lem:support-of-integrand}, $k''$ satisfies $\quotientMap\left(k''\right) \in \residueFieldUnipotentSubgroup{m} \weylElement{l} \residueFieldUnipotentSubgroup{m}$. Since $\quotientMap\left(k\right) \in \residueFieldUnipotentSubgroup{m}$, then by the Bruhat decomposition of $\quotientMap\left(k''\right)$, we must have $$\quotientMap\left(\diag\left(u_1,\dots,u_m\right)\right) \antidiagPermutationMatrix{l} \quotientMap\left(d\right) = \antidiagPermutationMatrix{l}.$$
	Therefore, we have \begin{equation}\label{eq:paramterization-of-u-even-case}
		\begin{aligned}
		\diag\left(u_1,\dots,u_m\right) &= \diag\left(u_1',\dots,u_m'\right) \antidiagPermutationMatrix{l} d^{-1} \antidiagPermutationMatrix{l}^{-1},
		\end{aligned}
	\end{equation}
    where $u_1',\dots,u_m' \in 1 + \maximalideal$ and $\prod_{i=1}^m \multiplicativeMeasure{u_i} = \prod_{i=1}^m \multiplicativeMeasure{u'_i}$.
	Denote $g = a \antidiagPermutationMatrix{l} dk$. By \cref{eq:parameterization-of-a-even-case} and \cref{eq:paramterization-of-u-even-case}, we have \begin{equation}\label{eq:decomposition-of-g-to-diagonal-matrices-even-case}
		 g = a \antidiagPermutationMatrix{l} dk = \lambda d_l \diag\left(u_1',\dots,u_m'\right) \antidiagPermutationMatrix{l} k.
	\end{equation}
	By \cref{eq:definition-of-k''-even-case} and \cref{eq:paramterization-of-u-even-case}, we have $k'' = \diag\left(u_1',\dots,u_m'\right) \antidiagPermutationMatrix{l} k$, and therefore $\quotientMap\left(k''\right) \in \antidiagPermutationMatrix{l} \residueFieldUnipotentSubgroup{m}$. Hence, by part \ref{item:X-form-lower-triangular-nilpotent} of \Cref{lem:support-of-integrand}, \begin{equation}\label{eq:parameterization-of-X-even-case}
		\begin{aligned}
		X & = \uniformizerDiagonalMatrix{l} Z \uniformizerDiagonalMatrix{l}^{-1}, \text{ where } Z \in \quotientMap^{-1}\left( \NilpotentLowerTriangular_m\left(\residueField\right) \cap \left(\antidiagPermutationMatrix{l} \NilpotentUpperTriangular_m\left(\residueField\right) \antidiagPermutationMatrix{l}^{-1} \right) \right),\\
		dX &= \rightHaarMeasureModulus{\borelSubgroup_m}\left(\uniformizerDiagonalMatrix{l}\right)dZ = q^{-l \left(m-l\right)}dZ.
		\end{aligned}
	\end{equation}
	Moreover, we have that \begin{equation}\label{eq:whittaker-function-argument-as-element-in-whittaker-support}
		\permutationMatrix{2m} \ShalikaUnipotentElement{X} \ShalikaDiagonalElement{g}  \permutationMatrix{2m}^{-1} = \lambda \cyclicGroupGenerator{2m}^{2l} v,
	\end{equation} where $v \in \GL{2m}{\integersring}$ satisfies $\quotientMap\left(v\right) \in \residueFieldUnipotentSubgroup{2m}$, $\quotientMap\left(v\right)$ has zeros right above its diagonal, and $v$ has zero at its bottom left corner. Therefore by \cref{eq:simple-supercuspidal-whittaker-function} and \cref{eq:whittaker-function-argument-as-element-in-whittaker-support},  in this domain \begin{equation}\label{eq:evaluation-of-whittaker-function-even-case}
		W \left( \permutationMatrix{2m} \ShalikaUnipotentElement{X} \ShalikaDiagonalElement{g}  \permutationMatrix{2m}^{-1}\right) =  \rootOfUniformizer^{2l} \multiplicativeCharacter\left(\lambda\right).
	\end{equation}
	
	We have by \cref{eq:decomposition-of-g-to-diagonal-matrices-even-case} \begin{equation}\label{eq:evaluation-of-determinant-even-case}
		\det\left(a \antidiagPermutationMatrix{l} dk\right) = \lambda^m  \left(\modifiedUniformizer\right)^l \left(\prod_{i=1}^{m}{u'_i}\right) \left(-1\right)^{l \left(m - l\right)} \det k,
	\end{equation} and
	\begin{equation}\label{eq:evaluation-of-abs-determinant-even-case}
	\abs{\det a}^s = \abs{\lambda}^{ms} q^{-ls}.
	\end{equation}
	Since $\det k \in 1 +\maximalideal$,  $\prod_{i=1}^m{u'_i} \in 1 + \maximalideal$, and since $\unitaryDetCharacter$ is a tamely ramified character, and since $\left(-1\right)^{l^2} = \left(-1\right)^l$, we get by \cref{eq:evaluation-of-determinant-even-case} \begin{equation}\label{eq:evaluation-of-unitary-det-character}
		\unitaryDetCharacter\left(\det\left(a \antidiagPermutationMatrix{l} dk\right)\right) = \unitaryDetCharacter\left(\lambda\right)^m \unitaryDetCharacter\left( \left(-1\right)^{m-1} \modifiedUniformizer\right)^l.
	\end{equation}
	
	We have by \cref{eq:decomposition-of-g-to-diagonal-matrices-even-case} that $\lastrowvector a \antidiagPermutationMatrix{l} d k = u'_m \lambda \modifiedUniformizer \rowvector{l} k$. Since $u'_m \in 1 + \maximalideal \subseteq \multiplicativegroup{\integersring}$, $x \in \localField^m$ satisfies $x \in \integersring^m$ if and only if $u'_m x \in \integersring^m$. In the case $x \in \integersring^m$, we have $x \equiv u'_m x \left(\modulooperator \maximalideal\right)$, which implies that if $x = \left(x_1, \dots, x_m\right)$ then $x_1 \equiv u'_m x_1 \left(\modulooperator \maximalideal \right)$. Since $\fieldCharacter$ has conductor $\maximalideal$, $\fieldCharacter\left( u'_m x_1 \right) = \fieldCharacter\left(x_1\right)$. Therefore, by the definition of $\phi$ in \cref{eqn:definition_of_phi}, $\phi\left(u'_m x\right) = \phi\left(x\right)$ for every $x \in \localField^m$. It follows that \begin{equation}\label{eq:evaluation-of-phi-even-case}
		\phi\left(\lastrowvector a \antidiagPermutationMatrix{l} d k\right) = \phi\left(u'_m \lambda \modifiedUniformizer \rowvector{l}  k\right) = \phi\left(\lambda \modifiedUniformizer \rowvector{l} k\right).
	\end{equation}
	
	Substituting in \cref{eq:expresion-for-j-w-d-even-case} the equalities \cref{eq:parameterization-of-a-even-case}, \cref{eq:paramterization-of-u-even-case},
	\cref{eq:parameterization-of-X-even-case},  \cref{eq:evaluation-of-whittaker-function-even-case}, \cref{eq:evaluation-of-abs-determinant-even-case}, \cref{eq:evaluation-of-unitary-det-character} and \cref{eq:evaluation-of-phi-even-case}, we get \begin{equation}\label{eq:evaluation-of-expressions-in-integrand-even-case}
		\begin{split}
		J_{\antidiagPermutationMatrix{l} d} = & \int  \left(\rootOfUniformizer^{2l} \multiplicativeCharacter\left(\lambda\right)\right) \left(\abs{\lambda}^{m s} q^{-ls}\right) \left(\unitaryDetCharacter\left(\lambda\right)^m \unitaryDetCharacter\left( \left(-1\right)^{m-1} \modifiedUniformizer \right)^l\right) \\
		& \cdot   \phi\left( \lambda \modifiedUniformizer \rowvector{l} k \right) q^{-l\left(m-l\right)}  \left(q^{-l\left(m-l\right)} dZ\right) \left(\multiplicativeMeasure{\lambda} \prod_{i=1}^m{\multiplicativeMeasure{u'_i}}\right) \multiplicativeMeasure{k},
		\end{split}
	\end{equation}
	where the integral is integrated over $\lambda \in \multiplicativegroup{\localField}$, $u'_1,\dots,u'_m \in 1 + \maximalideal$, $Z \in \quotientMap^{-1}\left( \NilpotentLowerTriangular_m\left(\residueField\right) \cap \left(\antidiagPermutationMatrix{l} \NilpotentUpperTriangular_m\left(\residueField\right) \antidiagPermutationMatrix{l}^{-1} \right) \right)$, $k \in \proUnipotentRadical_m$.
	
	Denote $\uniformizerValueForTwistedGamma = \rootOfUniformizer^2 \unitaryDetCharacter\left( \left(-1\right)^{m-1}  \modifiedUniformizer \right)$. We can now evaluate the integration over $Z$,$u_1',\dots,u_{m}'$ in \cref{eq:evaluation-of-expressions-in-integrand-even-case} and get \begin{equation}\label{eq:partial-evaluation-of-integral-even-case}
		\begin{split}
		J_{\antidiagPermutationMatrix{l} d} = & q^{-2l\left(m-l\right)} q^{-ls} \cdot \frac{\sizeof{\NilpotentLowerTriangular_m\left(\residueField\right) \cap \left(\antidiagPermutationMatrix{l} \NilpotentUpperTriangular_m\left(\residueField\right) \antidiagPermutationMatrix{l}^{-1} \right)}}{\sizeof{\NilpotentLowerTriangular_m\left(\residueField\right)}} \frac{1}{\sizeof{\multiplicativegroup{\residueField}}^{m}} {\uniformizerValueForTwistedGamma}^l\\
		& \cdot \int_{\proUnipotentRadical_m} \int_{\multiplicativegroup{\localField}} \multiplicativeCharacter\left(\lambda\right) \unitaryDetCharacter\left(\lambda\right)^m \abs{\lambda}^{m s} \phi\left( \lambda \modifiedUniformizer \rowvector{l} k \right) \multiplicativeMeasure{\lambda} \multiplicativeMeasure{k}.
		\end{split}
	\end{equation}
	Notice that \cref{eq:partial-evaluation-of-integral-even-case} implies that $J_{\antidiagPermutationMatrix{l} d}$ does not depend on $d \in D_m$, and we have $J_{\antidiagPermutationMatrix{l}} = J_{\antidiagPermutationMatrix{l}d}$ for every $d \in D_m$.
	Denote \begin{equation}\label{eq:definition-of-jl-even-case}
		J_l = \sum_{d \in D_m}\sizeof{C_{\antidiagPermutationMatrix{l} d}} J_{\antidiagPermutationMatrix{l}d} = \sum_{d \in D_m} \frac{\sizeof{\residueFieldUnipotentSubgroup{m} \antidiagPermutationMatrix{l} \quotientMap\left(d\right) \residueFieldUnipotentSubgroup{m}}}{\sizeof{\residueFieldUnipotentSubgroup{m}}} J_{\antidiagPermutationMatrix{l}}.
	\end{equation}
	By \Cref{lem:support-size-finite-field},  \begin{equation}\label{eq:sizes-of-residue-field-sets-even-case}
		\frac{\sizeof{\residueFieldUnipotentSubgroup{m} \antidiagPermutationMatrix{l} \quotientMap\left(d\right) \residueFieldUnipotentSubgroup{m}}}{\sizeof{\residueFieldUnipotentSubgroup{m}}} = \sizeof{\NilpotentLowerTriangular_m\left(\residueField\right) \cap \left(\antidiagPermutationMatrix{l} \NilpotentUpperTriangular_m\left(\residueField\right) \antidiagPermutationMatrix{l}^{-1} \right)} = q^{ \binom{m}{2} - \binom{l}{2} - \binom{m - l}{2} }.
	\end{equation} We also have $\sizeof{D_m} = \sizeof{\diagonalSubgroup_m\left(\residueField\right)} = \sizeof{\multiplicativegroup{\residueField}}^m$. Therefore by substituting into \cref{eq:definition-of-jl-even-case} the equalities  \cref{eq:partial-evaluation-of-integral-even-case} and \cref{eq:sizes-of-residue-field-sets-even-case}, we have
	\begin{equation}\label{eqn:partial-evaluation-of-J-l-even-case}
		J_{l} = q^{-\binom{m}{2}} q^{-l s} \uniformizerValueForTwistedGamma^l \cdot \int_{\proUnipotentRadical_m} \int_{\multiplicativegroup{\localField}} \multiplicativeCharacter\left(\lambda\right) \unitaryDetCharacter\left(\lambda\right)^m \abs{\lambda}^{m s} \phi\left( \lambda \modifiedUniformizer \rowvector{l} k \right) \multiplicativeMeasure{\lambda} \multiplicativeMeasure{k},
	\end{equation}
	where the expression $q^{-\binom{m}{2}}$ arises from the identity $- 2l\left(m - l\right) + 2\binom{m}{2} - 2\binom{l}{2} - 2\binom{m-l}{2} - \binom{m}{2} = -\binom{m}{2}$.
	
	Since $k \in \proUnipotentRadical_m$, we have that $\rowvector{l} k \in \integersring^m$ has $1$ as its $l$ coordinate modulo $\maximalideal$. Therefore if $\lambda \modifiedUniformizer \varepsilon_l k \in \integersring^m$, we must have $\abs{\lambda \modifiedUniformizer} \le 1$, i.e. $\lambda = \left(\modifiedUniformizer\right)^j \cdot u_0$, for some $u_0 \in \multiplicativegroup{\integersring}$ and $j \ge -1$. For a fixed $k \in \proUnipotentRadical_m$ we decompose
	\begin{equation}\label{eq:tate-integral-even-case}
		\begin{split}
		&\int_{\multiplicativegroup{\localField}} \multiplicativeCharacter\left(\lambda\right) \unitaryDetCharacter\left(\lambda\right)^m \abs{\lambda}^{m s} \phi\left( \lambda \modifiedUniformizer \rowvector{l} k \right) \multiplicativeMeasure{\lambda}
		\\
		& = \sum_{j = -1}^{\infty} \multiplicativeCharacter\left(\modifiedUniformizer\right)^j \unitaryDetCharacter\left(\modifiedUniformizer\right)^{j m} q^{-j m s} \cdot \int_{\multiplicativegroup{\integersring}} \multiplicativeCharacter\left(u_0\right) \unitaryDetCharacter\left(u_0\right)^m \phi\left(u_0 \left(\modifiedUniformizer\right)^{j+1} \rowvector{l} k \right) \multiplicativeMeasure{u_0} 
		\end{split}
	\end{equation}
	Since $\uniformizerValueForTwistedGamma^m = \rootOfUniformizer^{2m} \unitaryDetCharacter\left( \left(-1\right)^{m \left(m - 1\right)}  \right) \unitaryDetCharacter\left(\modifiedUniformizer\right)^m$, $m \left(m - 1\right)$ is even, and $\rootOfUniformizer^{2m} = \multiplicativeCharacter\left(\modifiedUniformizer\right)$, we have $\uniformizerValueForTwistedGamma^m = \multiplicativeCharacter\left( \modifiedUniformizer \right) \unitaryDetCharacter\left( \modifiedUniformizer \right)^m$. Therefore, we get from \cref{eq:tate-integral-even-case} that
    \begin{equation}\label{eq:tate-integral-2-even-case}
	\begin{split}
	&\int_{\multiplicativegroup{\localField}} \multiplicativeCharacter\left(\lambda\right) \unitaryDetCharacter\left(\lambda\right)^m \abs{\lambda}^{m s} \phi\left(  \lambda \modifiedUniformizer \rowvector{l} k \right) \multiplicativeMeasure{\lambda}
	\\
	& = \sum_{j = -1}^{\infty} \left(\uniformizerValueForTwistedGamma q^{-s}\right)^{jm} \cdot \int_{\multiplicativegroup{\integersring}} \multiplicativeCharacter\left(u_0\right) \unitaryDetCharacter\left(u_0\right)^m \phi\left(u_0 \left(\modifiedUniformizer\right)^{j+1} \rowvector{l}  k \right) \multiplicativeMeasure{u_0}.
	\end{split}
    \end{equation}
	If $l \ge 2$, then $\rowvector{l} k$ has $0$ as its first coordinate modulo $\maximalideal$, so for every $\displaystyle{j \ge -1}$, the first coordinate of $u_0 \left(\modifiedUniformizer\right)^{j+1} \rowvector{l} k$ is $0$ modulo $\maximalideal$. Thus $\phi\left(u_0 \left(\modifiedUniformizer\right)^{j+1} \rowvector{l} k \right) = 1$. We also have that $$\int_{\multiplicativegroup{\integersring}}{\multiplicativeCharacter\left(u_0\right)\unitaryDetCharacter\left(u_0\right)^m}\multiplicativeMeasure{u_0} = \begin{cases}
	1 & \left(\multiplicativeCharacter \cdot \unitaryDetCharacter^m \right)\restriction_{\multiplicativegroup{\integersring}} = 1\\
	0 & \text{otherwise}
	\end{cases}.$$
	Therefore, from \cref{eqn:partial-evaluation-of-J-l-even-case} and \cref{eq:tate-integral-2-even-case}, we get for $l \ge 2$ that, $$J_l = \begin{cases}
	\frac{q^{-\binom{m}{2}}}{\grpIndex{\GL{m}{\residueField}}{\residueFieldUnipotentSubgroup{m}}} \left( \uniformizerValueForTwistedGamma q^{-s} \right)^{-m} \left( \uniformizerValueForTwistedGamma q^{-s} \right)^{l} \frac{1 }{1 - \uniformizerValueForTwistedGamma^m q^{-ms}} & \left(\multiplicativeCharacter \cdot \unitaryDetCharacter^m \right) \restriction_{\multiplicativegroup{\integersring}} = 1\\
	0 & \text{otherwise}
	\end{cases}.$$
	
	If $l = 1$ and $j \ge 0$, we have that $\left(\modifiedUniformizer\right)^{j+1} \rowvector{l} u_0  k \equiv 0 \left(\modulooperator \maximalideal \right)$, and therefore we have again $\phi\left( u_0 \left(\modifiedUniformizer\right)^{j+1} \rowvector{l} k \right) = 1$ and $$\int_{\multiplicativegroup{\integersring}}{\multiplicativeCharacter\left(u_0\right)\unitaryDetCharacter\left(u_0\right)^m}\multiplicativeMeasure{u_0} = \begin{cases}
	1 & \left(\multiplicativeCharacter \cdot \unitaryDetCharacter^m \right)\restriction_{\multiplicativegroup{\integersring}} = 1\\
	0 & \text{otherwise}
	\end{cases}.$$ If $l = 1$ and $j = -1$, we have that $\firstrowvector k$ has $1$ as its first coordinate modulo $\maximalideal$, and therefore $\phi\left( u_0 \left(\modifiedUniformizer\right)^{j+1} \rowvector{l} k \right) = \fieldCharacter\left(-\quotientMap\left(u_0\right) \right)$, and we have $$\int_{\multiplicativegroup{\integersring}} \multiplicativeCharacter\left(u_0\right) \unitaryDetCharacter\left(u_0\right)^m \fieldCharacter\left(-u_0\right) \multiplicativeMeasure{u_0} = \frac{1}{\sizeof{\multiplicativegroup{\residueField}}} \sum_{\lambda \in \multiplicativegroup{\residueField}}{ \multiplicativeCharacter\left(\lambda\right) \unitaryDetCharacter\left(\lambda\right)^m \fieldCharacter\left(-\lambda\right)}.$$
	To summarize, we get: $$J_1 = \begin{cases}
	\frac{q^{-\binom{m}{2}}}{\grpIndex{\GL{m}{\residueField}}{\residueFieldUnipotentSubgroup{m}}} \left( \uniformizerValueForTwistedGamma q^{-s} \right)^{-m} \left(\frac{\left({\uniformizerValueForTwistedGamma q^{-s}}\right)^{m+1} }{1 - \uniformizerValueForTwistedGamma^m q^{-ms}} - \frac{\uniformizerValueForTwistedGamma q^{-s}}{q-1}\right) & \left(\multiplicativeCharacter \cdot \unitaryDetCharacter^m \right)\restriction_{\multiplicativegroup{\integersring}} = 1, \\
	\frac{ q^{-\binom{m}{2}}  }{\grpIndex{\GL{m}{\residueField}}{\residueFieldUnipotentSubgroup{m}}} \left( \uniformizerValueForTwistedGamma q^{-s} \right)^{-m} \left( \uniformizerValueForTwistedGamma q^{-s} \right) \frac{1}{\sizeof{\multiplicativegroup{\residueField}}} \sum_{\lambda \in \multiplicativegroup{\residueField}}{ \multiplicativeCharacter\left(\lambda\right) \unitaryDetCharacter\left(\lambda\right)^m \fieldCharacter\left(-\lambda\right)} & \text{otherwise}
	\end{cases}.$$
	Summing all the $J_l$ up, we get \begin{equation}\label{eq:computation-of-J-even-case}
		J = \sum_{l = 1}^m{J_l} = \begin{cases}
		\frac{q^{-\binom{m}{2}}}{\grpIndex{\GL{m}{\residueField}}{\residueFieldUnipotentSubgroup{m}}} \left(\uniformizerValueForTwistedGamma q^{-s}\right)^{-\left(m - 2\right)} \cdot q \cdot \frac{1 - \uniformizerValueForTwistedGamma^{-1} q^{-\left(1-s\right)}}{\left(1 - \uniformizerValueForTwistedGamma q^{-s}\right)\left(q-1\right)}  & \left(\multiplicativeCharacter \cdot \unitaryDetCharacter^m \right)\restriction_{\multiplicativegroup{\integersring}} = 1,\\
		\frac{ q^{-\binom{m}{2}}  }{\grpIndex{\GL{m}{\residueField}}{\residueFieldUnipotentSubgroup{m}}} \left( \uniformizerValueForTwistedGamma q^{-s} \right)^{-\left(m-1\right)} \frac{1}{\sizeof{\multiplicativegroup{\residueField}}} \sum_{\lambda \in \multiplicativegroup{\residueField}}{ \multiplicativeCharacter\left(\lambda\right) \unitaryDetCharacter\left(\lambda\right)^m \fieldCharacter\left(-\lambda\right)} & \text{otherwise}
		\end{cases}.
	\end{equation}
	
	We now move to compute $\tilde{J} = \TwistedDualJSOfLocalFieldRepresentation{s}{\localFieldRepresentation\left( \evenPermutationMatrix \right)^{-1}W}{\phi}$. Following the same steps as before for the expression in \Cref{prop:js-dual-formulas}, we have $$\tilde{J} = \sum_{l=1}^m{\tilde{J}_l},$$ where
	$$ \tilde{J}_l =  q^{-\binom{m}{2}} q^{-l \left(s-1\right)} \uniformizerValueForTwistedGamma^l \cdot \int_{\proUnipotentRadical_m} \int_{\multiplicativegroup{\localField}} \multiplicativeCharacter\left(\lambda\right) \unitaryDetCharacter\left(\lambda\right)^m \abs{\lambda}^{m \left(s - 1\right)} \FourierTransformWithRespectToCharacter{\phi}{\fieldCharacter}\left(\lambda^{-1} \firstrowvector \uniformizerDiagonalMatrix{l}^{-1} \antidiagPermutationMatrix{l} \inverseTranspose{k} \right) \multiplicativeMeasure{\lambda} \multiplicativeMeasure{k}.$$
	Recall that $$\FourierTransformWithRespectToCharacter{\phi}{\fieldCharacter}\left(x\right) = \begin{cases}
	q^{\frac{m}{2}} \indicatorFunction{\firstrowvector}\left(\quotientMap\left(x\right)\right) & x \in \integersring^m,\\
	0 & \text{otherwise}
	\end{cases}.$$
	We have that $$\lambda^{-1} \firstrowvector \uniformizerDiagonalMatrix{l}^{-1} \antidiagPermutationMatrix{l} \inverseTranspose{k} = \begin{cases}
	\lambda^{-1} \rowvector{l+1} \inverseTranspose{k} &  1 \le l \le m-1 \\
	\lambda^{-1} \left(\modifiedUniformizer\right)^{-1} \firstrowvector \inverseTranspose{k} & l = m
	\end{cases}.$$ If $1 \le l \le m-1$, we have that $\lambda^{-1} \firstrowvector \uniformizerDiagonalMatrix{l}^{-1} \antidiagPermutationMatrix{l} \inverseTranspose{k}$ is a scalar multiple of $\rowvector{l + 1} \inverseTranspose{k} \in \integersring^m$ and the $l+1$ coordinate of $\rowvector{l + 1} \inverseTranspose{k}$ is $1$ modulo $\maximalideal$, and therefore  $\lambda^{-1} \firstrowvector \uniformizerDiagonalMatrix{l}^{-1} \antidiagPermutationMatrix{l} \inverseTranspose{k}$ can not be in the support of $\FourierTransformWithRespectToCharacter{\phi}{\fieldCharacter}$ for any scalar $\lambda$. If $l=m$, $\lambda^{-1} \firstrowvector \uniformizerDiagonalMatrix{m}^{-1} \antidiagPermutationMatrix{l} \inverseTranspose{k}$ is a scalar multiple of $\rowvector{1} \inverseTranspose{k} \in \integersring^m$, which satisfies $\quotientMap\left(\rowvector{1} \inverseTranspose{k}\right) \equiv \firstrowvector \left(\modulooperator \maximalideal \right)$. Therefore, in order for $\lambda^{-1} \firstrowvector \uniformizerDiagonalMatrix{l}^{-1} \antidiagPermutationMatrix{l} \inverseTranspose{k}$ to be in $\integersring^m$ and to be $\firstrowvector$ modulo $\maximalideal$, we must have $l=m$, and $\lambda^{-1} \left(\modifiedUniformizer\right)^{-1} \in 1 + \maximalideal$. Hence we have $\tilde{J}_l = 0$ for $1 \le l \le m-1$, and for $l = m$, we have that $\lambda$ is integrated on $\left(\modifiedUniformizer\right)^{-1} \left(1 + \maximalideal \right)$ and that \begin{align*}
		\tilde{J}_m &= q^{-\binom{m}{2}} q^{-m \left(s-1\right)} \uniformizerValueForTwistedGamma^m \cdot \int_{\proUnipotentRadical_m} \int_{1 + \maximalideal} \multiplicativeCharacter\left( \modifiedUniformizer \right)^{-1} \unitaryDetCharacter\left(\modifiedUniformizer\right)^{-m} \abs{ \left(\modifiedUniformizer\right) }^{-m \left(s - 1\right)} q^{\frac{m}{2}} \multiplicativeMeasure{\lambda} \multiplicativeMeasure{k}\\
		&= \frac{1}{\sizeof{\multiplicativegroup{\residueField}}} \frac{q^{-\binom{m}{2}}}{\grpIndex{\GL{m}{\residueField}}{\residueFieldUnipotentSubgroup{m}}} \cdot q^{\frac{m}{2}}.
	\end{align*}
	Therefore \begin{equation}\label{eq:computation-of-J-tilde-even-case}
		\tilde{J} = \tilde{J}_m = \frac{1}{\sizeof{\multiplicativegroup{\residueField}}} \frac{q^{-\binom{m}{2}}}{\grpIndex{\GL{m}{\residueField}}{\residueFieldUnipotentSubgroup{m}}} \cdot q^{\frac{m}{2}}.
	\end{equation}
	Recalling the fact that when $\left(\multiplicativeCharacter \cdot \unitaryDetCharacter^m\right)\restriction_{\multiplicativegroup{\integersring}} \ne 1$, the Gauss sum $$G\left(\multiplicativeCharacter \cdot \unitaryDetCharacter^m,\fieldCharacter\right) = \sum_{\lambda \in \multiplicativegroup{\residueField}}{ \multiplicativeCharacter\left(\lambda\right) \unitaryDetCharacter\left(\lambda\right)^m \fieldCharacter\left(-\lambda\right)}$$ has absolute value $\sqrt{q}$, we have that \begin{equation}\label{eq:inverse-of-gauss-sum}
		G\left(\multiplicativeCharacter \cdot \unitaryDetCharacter^m,\fieldCharacter\right)^{-1} = \frac{\conjugate{G\left(\multiplicativeCharacter \cdot \unitaryDetCharacter^m,\fieldCharacter\right)}}{q} = \frac{1}{q} \sum_{\lambda \in \multiplicativegroup{\residueField}}{ \multiplicativeCharacter\left(\lambda^{-1}\right) \unitaryDetCharacter\left(\lambda^{-m}\right) \fieldCharacter\left(\lambda\right)}.
	\end{equation} By \cref{eq:computation-of-J-even-case}, \cref{eq:computation-of-J-tilde-even-case}, and \cref{eq:inverse-of-gauss-sum} we get $$\twistedGammaFactorOfLocalField{s} = \frac{\tilde{J}}{J} = \begin{cases}
	\left(\uniformizerValueForTwistedGamma q^{-\left(s - \frac{1}{2}\right)}\right)^{m - 2} \cdot \frac{1 - \uniformizerValueForTwistedGamma q^{-s}}{1 - \uniformizerValueForTwistedGamma^{-1} q^{-\left(1-s\right)}} & \left(\multiplicativeCharacter \cdot \unitaryDetCharacter^m\right)\restriction_{\multiplicativegroup{\integersring}} = 1, \\
	\left(\uniformizerValueForTwistedGamma q^{-\left(s - \frac{1}{2}\right)}\right)^{m-1} \frac{1}{\sqrt{q}}\sum_{\lambda \in \multiplicativegroup{\residueField}}{ \multiplicativeCharacter\left(\lambda^{-1} \right) \unitaryDetCharacter\left(\lambda^{-m}\right) \fieldCharacter\left(\lambda\right)} & \text{otherwise}
	\end{cases}.$$
	The formula $\twistedGammaFactorOfLocalField{s} = \left(\uniformizerValueForTwistedGamma q^{-\left(s - \frac{1}{2}\right)}\right)^{m-1} \gammaFactorOfCharacter{s}{\exteriorCharacter{\left(\multiplicativeCharacter \cdot \unitaryDetCharacter^m\right)}}$ now follows from a standard computation of the local factors of Tate's functional equation, see for instance \cite[Section 7.1]{ramakrishnan2013fourier} or \cite[Proposition 3.8]{kudla2004tate}.
	
	The claim about the other twisted exterior square factors now follows from \Cref{lem:computation-of-exterior-square-factors} and the fact that $1 - \uniformizerValueForTwistedGamma Z$ and $1 - \uniformizerValueForTwistedGamma^{-1}q^{-1} Z^{-1}$ don't have mutual roots.
\end{proof}

\begin{rem}
	For the choice of test data $\left(\localFieldRepresentation \left(\evenPermutationMatrix^{-1}\right)W, \phi\right)$ as in the proof, we have that $\TwistedJSOfLocalFieldRepresentation{s}{\localFieldRepresentation \left(\evenPermutationMatrix^{-1}\right)W}{\phi}$ is non-zero if and only if $\unitaryDetCharacter$ is tamely ramified: otherwise on the right hand side of \cref{eq:evaluation-of-unitary-det-character} we will have a product $\prod_{i = 1}^m{ \unitaryDetCharacter\left( u'_i\right)} $, and since we integrate $u'_i$ over $1+\maximalideal$, we have that the integral vanishes unless the restriction of $\unitaryDetCharacter$ to $1+\maximalideal$ is trivial.
\end{rem}

\subsection{The odd case}\label{subsection:odd-computation}
In this section, we compute the twisted exterior square factors for the odd case $n = 2m + 1$.

\begin{thm}\label{thm:exterior-square-gamma-factor-for-odd-case}Let $\localFieldRepresentation$ be a simple supercuspidal representation of $\GL{2m+1}{\localField}$ with central character $\multiplicativeCharacter$, associated with the data $\left(t_0, \rootOfUniformizer\right)$. 
	Let $\unitaryDetCharacter : \multiplicativegroup{\localField} \rightarrow \multiplicativegroup{\cComplex}$ be a unitary tamely ramified character, i.e. $\unitaryDetCharacter \restriction_{1 + \maximalideal} = 1$. Then
	$$ \twistedGammaFactorOfLocalField{s} = \left(\unitaryDetCharacter\left( \modifiedUniformizer \right) \rootOfUniformizer^{2} q^{-\left(s - \frac{1}{2}\right)}\right)^m.$$
	Furthermore, in this case $\TwistedExteriorSquareLFunction{s}{\localFieldRepresentation}{\unitaryDetCharacter} = 1$, $\TwistedEpsilonFactorOfLocalField{s} = \twistedGammaFactorOfLocalField{s}$.
\end{thm}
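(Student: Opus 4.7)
The plan is to mirror the proof of \Cref{thm:exterior-square-gamma-factor-for-even-case}: compute both $J = \TwistedJSOfLocalFieldRepresentation{s}{\localFieldRepresentation(\oddPermutationMatrix^{-1})W}{\phi}$ and its dual $\tilde{J} = \TwistedDualJSOfLocalFieldRepresentation{s}{\localFieldRepresentation(\oddPermutationMatrix^{-1})W}{\phi}$ for the explicit Whittaker function $W$ from \eqref{eq:simple-supercuspidal-whittaker-function}, and then read off $\twistedGammaFactorOfLocalField{s} = \tilde{J}/J$ via \Cref{thm:functional-equation-gamma-factor}. A natural choice of test function here is $\phi = \indicatorFunction{\integersring^m}$, the characteristic function of $\integersring^m$, whose Fourier transform is $q^{m/2}\indicatorFunction{\integersring^m}$; no auxiliary character needs to be built into $\phi$, which reflects the expected absence of any $L$-function denominator in the odd case.

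After applying Iwasawa to the outer $g$-integral and decomposing $\maximalCompactSubgroup_m$ along Bruhat cells indexed by triples $(w,d,u)\in\weylGroup{m}\times D_m\times C_{wd}$ exactly as in \Cref{subsection:even-computation}, the integral $J$ becomes a weighted sum of cell integrals $J_{wd}$. The next step is to establish odd analogues of \Cref{lem:support-finite-field,lem:abs-of-ai-elements,lem:support-of-integrand} by analyzing when
\[
\oddPermutationMatrix \ShalikaUnipotentElementOdd{X} \ShalikaDiagonalElementOdd{g} \ShalikaLowerUnipotentElementOdd{Z} \oddPermutationMatrix^{-1}
\]
lies in $\UnipotentSubgroup_{2m+1}\,\cyclicGroupGenerator{2m+1}^{\,r}\,\proUnipotentRadical_{2m+1}$. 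The same determinant count and the $\abs{t_i}$ bounds from \cite[Section 5, Propositions 4--5]{jacquet11exterior} should again force $r=2l$, $w=\antidiagPermutationMatrix{l}$, $a=\lambda\uniformizerDiagonalMatrix{l}\cdot(\text{units})$ and $X=\uniformizerDiagonalMatrix{l}Z'\uniformizerDiagonalMatrix{l}^{-1}$ with $Z'\in\NilpotentLowerTriangular_m(\integersring)$, while the factor $\phi(Z)$ combined with the $\proUnipotentRadical_{2m+1}$ condition pins $Z$ to an appropriate integrality class.

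Substituting into $J_{\antidiagPermutationMatrix{l}d}$, evaluating $W$ via \eqref{eq:simple-supercuspidal-whittaker-function} (which contributes $\rootOfUniformizer^{2l}\multiplicativeCharacter(\lambda)$ together with corrections from the affine generic character $\affineGenericCharacter$ acting on the entries produced by pushing $Z$ through $\oddPermutationMatrix$), and integrating out the unit parameters together with $Z$, each cell reduces to a Tate-type integral in $\lambda$. Since $\TwistedExteriorSquareLFunction{s}{\localFieldRepresentation}{\unitaryDetCharacter}=1$ by \Cref{thm:roots-of-l-function}(1), the sum over $l$ should collapse to a pure monomial in $q^{-s}$ with no surviving geometric series. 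The dual integral $\tilde{J}$ is handled symmetrically via \Cref{prop:js-dual-formulas}(2) using the extra $\smallAntiDiagTwo{1}{\IdentityMatrix{2m}}$ factor and the self-dual Fourier transform $\FourierTransformWithRespectToCharacter{\phi}{\fieldCharacter}=q^{m/2}\phi$; their ratio should recover the claimed formula, and the $L$- and $\epsilon$-statements then follow from \Cref{lem:computation-of-exterior-square-factors}.

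The main technical obstacle is the odd support analysis: the variable $Z$ in $\ShalikaLowerUnipotentElementOdd{Z}$ sits in the bottom row of the argument of $W$, and $\oddPermutationMatrix$-conjugation redistributes its entries onto several positions relevant to $\affineGenericCharacter$, in particular the position governed by $t$. Carefully tracking these contributions and verifying that the $Z$-integration collapses cleanly against $\phi$ requires genuine bookkeeping; once that is in place, the remainder of the computation should be lighter than in the even case, since there is no analogue of the delicate $l=1$ boundary term that produced the L-function factor there.
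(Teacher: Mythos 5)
Your overall strategy (compute $J$ and $\tilde J$ directly from the explicit Whittaker function and divide) is the paper's strategy, but the proposal contains a genuine error and a structural misconception that would derail the computation.

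First, the concrete error: with $\fieldCharacter$ of conductor $\maximalideal$ and the measure normalized so that $\integersring$ has volume $1$, the Fourier transform of $\indicatorFunction{\integersring^m}$ is $q^{m/2}\indicatorFunction{\maximalideal^m}$, \emph{not} $q^{m/2}\indicatorFunction{\integersring^m}$; the dual lattice of $\integersring$ under this pairing is $\maximalideal$, not $\integersring$. The function the paper actually takes is the characteristic function of $\maximalideal^m$, with $\FourierTransformWithRespectToCharacter{\phi}{\fieldCharacter} = q^{-m/2}\indicatorFunction{\integersring^m}$. This choice is not cosmetic: it forces $Z\in\Mat{1}{m}{\maximalideal}$ inside $J$, which makes $\oddPermutationMatrix\ShalikaLowerUnipotentElementOdd{Z}\oddPermutationMatrix^{-1}$ lie in $\proUnipotentRadical_{2m+1}$ outright (the entries of $Z$ land in positions $(2m+1,2j)$, strictly below the diagonal, so $\maximalideal$-integrality is exactly what is needed), while the condition $Z\in\Mat{1}{m}{\integersring}$ suffices for the dual integral because there $\ShalikaUpperRightUnipotentElementOdd{-\transpose{Z}}$ conjugates into the pro-unipotent radical already for $\integersring$-entries. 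With your $\phi$ the support analysis for $J$ would face $Z$-entries that are units, and the ``corrections from $\affineGenericCharacter$'' you anticipate would indeed appear; the paper's $\phi$ is engineered precisely so that they do not.

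Second, the structural misconception: you plan to build odd analogues of \Cref{lem:support-finite-field}, \Cref{lem:abs-of-ai-elements}, \Cref{lem:support-of-integrand} for all $1\le l\le m$ and then watch the sum over cells collapse. The paper does not do this. Once the $Z$-factor is absorbed into $\proUnipotentRadical_{2m+1}$, it examines the last \emph{row} of $\oddPermutationMatrix\ShalikaUnipotentElementOdd{X}\ShalikaDiagonalElementOdd{ak}\oddPermutationMatrix^{-1}$ (which is forced to equal $\rowvector{2m+1}$) and deduces immediately that the only admissible power of $\cyclicGroupGenerator{2m+1}$ is $l=2m+1$; there is no sum over $l$ to begin with. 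The problem then reduces, after stripping the trivial bottom-right block, to a single application of the even-case \Cref{lem:support-of-integrand} with $l=m$, giving $a\in(\multiplicativegroup{\integersring})^m$, $X\in\NilpotentLowerTriangular_m(\maximalideal)$, $k_0\in\proUnipotentRadical_m$, and $W(Y)=1$ on the support. Similarly for $\tilde J$, one looks at the last \emph{column} (using the extra $\smallAntiDiagTwo{1}{\IdentityMatrix{2m}}$), gets $l=2m$ forced, reduces again to the even-case lemma, and obtains $W(Y)=\rootOfUniformizer^{2m}$; the monomial $\left(\unitaryDetCharacter(\modifiedUniformizer)\rootOfUniformizer^2 q^{-(s-1/2)}\right)^m$ then falls out of $\tilde J/J$ with no Tate-type $\lambda$-integral ever appearing. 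So the odd computation is in fact substantially \emph{shorter} than the even one, not a longer version of it, and the bookkeeping you flagged as the main obstacle is exactly what the paper's choice of $\phi$ eliminates.
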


\begin{proof}
	We compute $\TwistedJSOfLocalFieldRepresentation{s}{\localFieldRepresentation \left( \oddPermutationMatrix^{-1} \right) W}{\phi}$ and $\TwistedDualJSOfLocalFieldRepresentation{s}{\localFieldRepresentation \left( \oddPermutationMatrix^{-1} \right) W}{\phi}$, where again $W$ is the Whittaker function introduced in \Cref{subsubsection:simple-supercuspidal}, but this time \begin{align*}
		\phi \left(x\right) = \begin{cases}
			\indicatorFunction{0}\left(\quotientMap \left(x\right)\right) & x \in \integersring^m, \\
			0                                                             & \text{otherwise}
		\end{cases},
	& &
	\FourierTransformWithRespectToCharacter{\phi}{\fieldCharacter}\left(x\right) = \begin{cases}
		q^{-\frac{m}{2}} & x \in \integersring^m, \\
		0                & \text{otherwise}
	\end{cases},
	\end{align*} where $\indicatorFunction{0}$ is the indicator function of $0 \in \residueField^m$.

	By the Iwasawa decomposition,  $\TwistedJSOfLocalFieldRepresentation{s}{\localFieldRepresentation \left( \oddPermutationMatrix^{-1} \right)W}{\phi}$ is given by
	\begin{equation}\label{eq:iwasawa-decomposition-for-J-odd-case}
		\begin{split}
		J = \int & W \left(\oddPermutationMatrix \ShalikaUnipotentElementOdd{X} \ShalikaDiagonalElementOdd{ak} \ShalikaLowerUnipotentElementOdd{Z} \oddPermutationMatrix^{-1} \right) \phi \left(Z\right) \\
		& \cdot \abs{\det a}^{s-1} \unitaryDetCharacter \left(\det \left(ak\right)\right) \rightHaarMeasureModulus{\borelSubgroup_m}\left(a\right) dX \multiplicativeMeasure{a} \multiplicativeMeasure{k} dZ,
		\end{split}
	\end{equation}
	where $X$ is integrated on $\NilpotentLowerTriangular_m$, $a$ is integrated on the diagonal matrix subgroup $\diagonalSubgroup_m$, $k$ is integrated on $\maximalCompactSubgroup_m = \GL{m}{\integersring}$, and $Z$ is integrated on $\Mat{1}{m}{\localField}$. In order for $Z$ to be in the support of $\phi$, we must have $Z \in \Mat{1}{m}{\integersring}$, such that $\quotientMap \left(Z\right) = 0$, i.e. $Z \in \Mat{1}{m}{\maximalideal}$. For such fixed $Z$, we have that $$\quotientMap\left(\oddPermutationMatrix \ShalikaLowerUnipotentElementOdd{Z} \oddPermutationMatrix^{-1}\right) = \IdentityMatrix{2m + 1},$$ and therefore $$\oddPermutationMatrix \ShalikaLowerUnipotentElementOdd{Z} \oddPermutationMatrix^{-1} \in \proUnipotentRadical_{2m+1}.$$ Hence, in order for $X$, $a$, $k$ to contribute to the integral, we must have $$\oddPermutationMatrix \ShalikaUnipotentElementOdd{X} \ShalikaDiagonalElementOdd{ak} \oddPermutationMatrix^{-1} = \lambda u' \cyclicGroupGenerator{2m+1}^l k',$$ where $\lambda \in \multiplicativegroup{\localField}$, $u' \in \UnipotentSubgroup_{2m + 1}$, $l \in \zIntegers$, $k' \in \proUnipotentRadical_{2m+1}$. Since $\cyclicGroupGenerator{2m+1}^{2m + 1} = \modifiedUniformizer \IdentityMatrix{2m + 1}$, we may assume (by modifying $\lambda$), that $1 \le l \le 2m + 1$. Notice that \begin{equation}\label{eq:last-row-has-last-standard-row-vector}\lambda \cyclicGroupGenerator{2m+1}^l k' = u'^{-1} \oddPermutationMatrix \ShalikaUnipotentElementOdd{X} \ShalikaDiagonalElementOdd{ak} \oddPermutationMatrix^{-1},
	\end{equation} and the right hand side of \cref{eq:last-row-has-last-standard-row-vector} has $\rowvector{2m + 1} = \left(0,\dots,0,1\right)$ as its last row. On the other hand, the last row of $\lambda \cyclicGroupGenerator{2m+1}^l k'$ is $\lambda \modifiedUniformizer \rowvector{l} k'$, where $\rowvector{l}$ is the $l$-th standard row vector. Since $\rowvector{l} k'$ is the $l$-th row of $k'$, we have that $\quotientMap \left( \rowvector{l} k'\right)$ is the $l$-th row of an upper triangular unipotent matrix, and therefore the equality $\lambda \modifiedUniformizer \rowvector{l} k' = \rowvector{2m + 1}$ can't hold unless $l = 2m+1$. Thus we have $l = 2m+1$, and that the last row of $k'$ is a scalar multiple of $\rowvector{2m + 1}$. Since $k' \in \GL{2m + 1}{\integersring}$, we may assume (by modifying $\lambda$ by a unit) that the last row of $k'$ is $\rowvector{2m + 1}$.
	We write $k' = \left(\begin{smallmatrix}
				k'' & v\\
				& 1
			\end{smallmatrix}\right)$, where $k'' \in \proUnipotentRadical_{2m}$ and $v$ is a column vector in $\Mat{2m}{1}{\integersring}$. Writing $k' = \left( \begin{smallmatrix}
				\IdentityMatrix{2m} & v \\
				& 1
			\end{smallmatrix} \right) \smallDiagTwo{k''}{1}$, we may assume (by modifying $u'$) that $k' = \smallDiagTwo{k''}{1}$, which implies that $u' = \smallDiagTwo{u''}{1}$ for $u'' \in \UnipotentSubgroup_{2m}$. Thus we get that $\lambda \modifiedUniformizer = 1$, and that \begin{equation}\label{eq:assumption-of-lemma-of-support-is-satisfied-odd}
				\evenPermutationMatrix \ShalikaUnipotentElement{X} \ShalikaDiagonalElement{ak} \evenPermutationMatrix^{-1} = \lambda u'' \cyclicGroupGenerator{2m}^{2m} k'' \in \lambda \UnipotentSubgroup_{2m} \cyclicGroupGenerator{2m}^{2m} \proUnipotentRadical_{2m}.
			\end{equation}
			Since \cref{eq:assumption-of-lemma-of-support-is-satisfied-odd} holds, we can apply \Cref{lem:support-of-integrand} and use $\lambda \modifiedUniformizer = 1$ to get that \begin{equation}\label{eq:parameterization-of-a-odd-case}
				\begin{aligned}
				a = \lambda \cdot \uniformizerDiagonalMatrix{m} \cdot \diag \left(u_1, \dots, u_m\right) &= \diag \left(u_1, \dots, u_m\right), \text{where } u_1, \dots, u_m \in \multiplicativegroup{\integersring},\\
				\multiplicativeMeasure{a} &= \prod_{i=1}^m \multiplicativeMeasure{u_i},\\
				\rightHaarMeasureModulus{\borelSubgroup_m}\left(a\right) &= 1.
				\end{aligned}
			\end{equation} Denote \begin{equation}\begin{split}
			\label{eq:def-of-k0-odd-case-computation-of-J}
			k_0 &= \diag\left(u_1,\dots,u_m\right) k,\\			
			\multiplicativeMeasure{k} &= \multiplicativeMeasure{k_0}.
			\end{split}
			\end{equation}
			Then $k = \diag\left(u_1,\dots,u_m\right)^{-1} k_0$, and by \Cref{lem:support-of-integrand} \begin{equation}\label{eq:support-of-k0-J-odd-case}
				k_0 \in \quotientMap^{-1}\left(\residueFieldUnipotentSubgroupDoubleCoset{m}{m}\right) = \quotientMap^{-1}\left(\residueFieldUnipotentSubgroup{m}\right) = \proUnipotentRadical_m
			\end{equation} Furthermore, since $\quotientMap\left(k_0\right) \in \residueFieldUnipotentSubgroup{m} = \antidiagPermutationMatrix{m} \residueFieldUnipotentSubgroup{m}$, by \Cref{lem:support-of-integrand} we have that $X \in \NilpotentLowerTriangular_m\left(\integersring\right)$ and that $X$ satisfies $\quotientMap \left(X\right) \in \NilpotentLowerTriangular_m\left(\residueField\right) \cap \left(\antidiagPermutationMatrix{m} \NilpotentUpperTriangular_m\left(\residueField\right) \antidiagPermutationMatrix{m}^{-1} \right) = \left\{ 0_m \right\}$, i.e. \begin{equation}\label{eq:support-of-X-J-odd-case}
				X \in \NilpotentLowerTriangular_m\left(\maximalideal\right).
			\end{equation} Also, since $\quotientMap\left(k_0\right) \in \antidiagPermutationMatrix{m} \residueFieldUnipotentSubgroup{m}$, by \Cref{lem:support-of-integrand} we have for such $Z$, $X$, $a$, $k$ that $$\evenPermutationMatrix \ShalikaUnipotentElement{X} \ShalikaDiagonalElement{ak} \evenPermutationMatrix^{-1} = v_0,$$ where $\quotientMap \left( v_0 \right) = v_0'$ is an upper triangular unipotent matrix having zeros right above its diagonal, and therefore \begin{equation}\label{eq:definition-of-Y-J-odd-case}
				\begin{aligned}
				Y &= \oddPermutationMatrix \ShalikaUnipotentElementOdd{X} \ShalikaDiagonalElementOdd{ak} \ShalikaLowerUnipotentElementOdd{Z} \oddPermutationMatrix^{-1} \\
				&= \diagTwo{v_0}{1} \oddPermutationMatrix \ShalikaLowerUnipotentElementOdd{Z} \oddPermutationMatrix^{-1}
			\end{aligned}
			\end{equation} satisfies $\quotientMap \left(Y\right) = \smallDiagTwo{v_0'}{1}$, which implies that $\quotientMap \left(Y\right)$ is an upper unipotent matrix with zeros right above its diagonal. Since $Y$ also has zero at its left bottom corner, we have from \cref{eq:simple-supercuspidal-whittaker-function} \begin{equation}\label{eq:whittaker-value-for-J-odd-case}
				W \left( Y \right) = 1.
			\end{equation}
		
		From \cref{eq:parameterization-of-a-odd-case} and \cref{eq:def-of-k0-odd-case-computation-of-J}, we have that $ak = k_0 \in \proUnipotentRadical_m$, so $\det\left(ak\right) = \det\left(k_0\right) \in 1 + \maximalideal$, which implies \begin{equation}\label{eq:evaluation-of-determinant-J-odd-case}
			\begin{aligned}\\
			\abs{\det a}^s &= 1,\\
			\unitaryDetCharacter\left(\det \left(ak\right)\right) &= 1,
			\end{aligned}
		\end{equation} as $\unitaryDetCharacter$ is tamely ramified.
		
	Therefore, we have by substituting in \cref{eq:iwasawa-decomposition-for-J-odd-case} the equations (\ref{eq:parameterization-of-a-odd-case})-(\ref{eq:evaluation-of-determinant-J-odd-case}) that \begin{equation}\label{eq:final-evaluation-of-J-odd-case}
		\begin{split}
		\TwistedJSOfLocalFieldRepresentation{s}{\localFieldRepresentation \left(\oddPermutationMatrix^{-1}\right)W}{\phi} &= \int_{\Mat{1}{m}{\maximalideal}} \int_{\proUnipotentRadical_m} \int_{\left(\multiplicativegroup{\integersring}\right)^m} \int_{\NilpotentLowerTriangular_m\left(\maximalideal\right)} dX \left(\prod_{i=1}^m \multiplicativeMeasure{u_i}\right) \multiplicativeMeasure{k_0} dZ \\
		&=  \frac{1}{\sizeof{\Mat{1}{m}{\residueField}}} \frac{1}{\grpIndex{\GL{m}{\residueField}}{\residueFieldUnipotentSubgroup{m}}} \frac{1}{\sizeof{\NilpotentLowerTriangular_m \left(\residueField\right)}}.
		\end{split}
	\end{equation}

	We now move to compute $\TwistedDualJSOfLocalFieldRepresentation{s}{\localFieldRepresentation \left(\oddPermutationMatrix^{-1}\right)W}{\phi}$. By \Cref{prop:js-dual-formulas}, we need to evaluate the integral
	\begin{equation}\label{eq:iwasawa-decomposition-for-J-tilde-odd-case}
		\begin{split}
		\tilde{J} = \int & W \left(\antiDiagTwo{1}{\IdentityMatrix{2m}} \oddPermutationMatrix \ShalikaUnipotentElementOdd{X} \ShalikaDiagonalElementOdd{ak} \ShalikaUpperRightUnipotentElementOdd{-\transpose{Z}} \oddPermutationMatrix^{-1} \right) \\
		& \cdot \FourierTransformWithRespectToCharacter{\phi}{\fieldCharacter} \left(Z\right)\abs{\det a}^{s} \unitaryDetCharacter \left(\det\left(ak\right)\right) \rightHaarMeasureModulus{\borelSubgroup_m}\left(a\right) dX \multiplicativeMeasure{a} \multiplicativeMeasure{k} dZ,
		\end{split}
	\end{equation}
	where again $X \in \NilpotentLowerTriangular_m$, $a \in \diagonalSubgroup_m$, $k \in \maximalCompactSubgroup_m = \GL{m}{\integersring}$, $Z \in \Mat{1}{m}{\localField}$. We notice that for every $Z \in \Mat{1}{m}{\integersring}$, $$\oddPermutationMatrix \ShalikaUpperRightUnipotentElementOdd{-\transpose{Z}} \oddPermutationMatrix^{-1} \in \proUnipotentRadical_{2m+1}.$$ Therefore, in order for $X$, $a$, $k$ to support the integrand, we need \begin{equation}\label{eq:last-column-of-the-left-hand-side-is-one}
		\antiDiagTwo{1}{\IdentityMatrix{2m}} \oddPermutationMatrix \ShalikaUnipotentElementOdd{X} \ShalikaDiagonalElementOdd{ak} \oddPermutationMatrix^{-1} = \lambda u' \cyclicGroupGenerator{2m + 1}^l k',
	\end{equation} where $\lambda \in \multiplicativegroup{\localField}$, $u' \in \UnipotentSubgroup_{2m + 1}$, $l \in \zIntegers$, $k' \in \proUnipotentRadical_{2m+1}$. Note that the left hand side of \cref{eq:last-column-of-the-left-hand-side-is-one} has $\standardColumnVector{1} = \transpose{\left(1,0,\dots,0\right)}$ as its last column. Therefore, $\lambda u' \cyclicGroupGenerator{2m + 1}^l k'$ needs to have $\standardColumnVector{1}$ as its last column, i.e. $\lambda u' \cyclicGroupGenerator{2m + 1}^l k' \standardColumnVector{2m+1} = \standardColumnVector{1}$, which implies $\lambda \cyclicGroupGenerator{2m + 1}^l k' \standardColumnVector{2m+1} = {u'}^{-1} \standardColumnVector{1} = \standardColumnVector{1}$. Since the $2m+1-l$ coordinate of $\quotientMap \left(\cyclicGroupGenerator{2m + 1}^l k' \standardColumnVector{2m+1} \right)$ is 1, we must have $l = 2m$. Therefore, from $\lambda \cyclicGroupGenerator{2m + 1}^{2m} k' \standardColumnVector{2m+1} = \standardColumnVector{1}$ we get that the last column of $k'$ is a scalar multiple of $\standardColumnVector{2m + 1}$. Modifying $\lambda$ by a unit, we may assume that $k'$ has $\standardColumnVector{2m + 1}$ as its last column. Write $k' = \left( \begin{smallmatrix}
				k'' & \\
				v & 1
			\end{smallmatrix} \right)$, where $k'' \in \proUnipotentRadical_{2m}$, $v \in \Mat{1}{2m}{\maximalideal}$. Writing $\cyclicGroupGenerator{2m + 1}^{2m} = \smallAntiDiagTwo{1}{\modifiedUniformizer\IdentityMatrix{2m}}$, we have
            \begin{equation*}
            \begin{split}
            \lambda u' \cyclicGroupGenerator{2m+1}^{2m} k' &= \lambda u' \antiDiagTwo{1}{\modifiedUniformizer \IdentityMatrix{2m}} \begin{pmatrix}
			k'' &\\
			v & 1
			\end{pmatrix}\\
            &=  \lambda u' \begin{pmatrix}
			1 & v\left(\modifiedUniformizer k''\right)^{-1}\\
			& \IdentityMatrix{2m}
			\end{pmatrix} \antiDiagTwo{1}{\modifiedUniformizer \IdentityMatrix{2m}} \diagTwo{k''}{1}.
            \end{split}
            \end{equation*}
            Therefore, by replacing $u'$ by $u' \left( \begin{smallmatrix}
				1 & {v {\left(\modifiedUniformizer k''\right)}^{-1}}\\
				  & \IdentityMatrix{2m}
			\end{smallmatrix} \right)$, we may assume $k' = \smallDiagTwo{k''}{1}$, which implies by \cref{eq:last-column-of-the-left-hand-side-is-one} that $u' = \smallDiagTwo{1}{u''}$ for $u'' \in \UnipotentSubgroup_{2m}$. Substituting the expressions for $u'$, $k'$, and the expression $\cyclicGroupGenerator{2m+1}^{2m} = \smallAntiDiagTwo{1}{\modifiedUniformizer \IdentityMatrix{2m}}$ in \cref{eq:last-column-of-the-left-hand-side-is-one}, we get that $$\oddPermutationMatrix \ShalikaUnipotentElementOdd{X} \ShalikaDiagonalElementOdd{ak} \oddPermutationMatrix^{-1} = \diagTwo{\lambda \modifiedUniformizer u'' k''}{\lambda},$$ and therefore $\lambda = 1$ and $$\evenPermutationMatrix \ShalikaUnipotentElement{X} \ShalikaDiagonalElement{ak} \evenPermutationMatrix^{-1} = \modifiedUniformizer u'' k''.$$
	By \Cref{lem:support-of-integrand}, we have that \begin{equation}\label{eq:parameterization-of-a-J-tilde-odd-case}
		\begin{split}
		a = \uniformizerDiagonalMatrix{m} \cdot \diag \left(u_1, \dots, u_m \right) &= \modifiedUniformizer \diag \left(u_1, \dots, u_m \right), \text{ where } u_1, \dots, u_m \in \multiplicativegroup{\integersring},\\
		\multiplicativeMeasure{a} &= \prod_{i=1}^m{\multiplicativeMeasure{u_i}},\\
		\rightHaarMeasureModulus{\borelSubgroup_m}\left(a\right) &= 1.
		\end{split}
	\end{equation}
	
	Denote \begin{equation}\label{eq:def-of-k0-odd-case-computation-of-J-tilde}
		\begin{split}
		k_0 &= \diag \left(u_1, \dots, u_m\right) k,\\
		\multiplicativeMeasure{k_0} &= \multiplicativeMeasure{k}.
		\end{split}
	\end{equation} Then by \Cref{lem:support-of-integrand}, \begin{equation}
		k_0 \in \quotientMap^{-1}\left( \residueFieldUnipotentSubgroupDoubleCoset{m}{m}\right) = \quotientMap^{-1}\left(\residueFieldUnipotentSubgroup{m}\right) = \proUnipotentRadical_m.
	\end{equation}
	Since $\quotientMap\left(k_0\right) \in \antidiagPermutationMatrix{m} \residueFieldUnipotentSubgroup{m} = \residueFieldUnipotentSubgroup{m}$, by \Cref{lem:support-of-integrand} we have $X \in \NilpotentLowerTriangular_m\left(\integersring\right)$ satisfies $\quotientMap \left(X\right) \in \NilpotentLowerTriangular_m\left(\residueField\right)\cap \left(\antidiagPermutationMatrix{m} \NilpotentUpperTriangular_m\left(\residueField\right) \antidiagPermutationMatrix{m}^{-1} \right) = \left\{0_m\right\}$, i.e. \begin{equation}\label{eq:support-of-X-J-tilde-odd-case}
		X \in \NilpotentLowerTriangular_m\left(\maximalideal\right).
	\end{equation}
	Also, since $\quotientMap\left(k_0\right) \in \antidiagPermutationMatrix{m} \residueFieldUnipotentSubgroup{m}$, by \Cref{lem:support-of-integrand} we have for such elements that $$\evenPermutationMatrix \ShalikaUnipotentElement{X} \ShalikaDiagonalElement{ak} \evenPermutationMatrix^{-1} = \cyclicGroupGenerator{2m}^{2m} v_0 = \modifiedUniformizer v_0,$$ where $v_0 \in \GL{2m}{\integersring}$ satisfies that $\quotientMap\left(v_0\right) = v_0'$ is an upper triangular unipotent matrix with zeros right above its diagonal. Hence, we have that \begin{equation}\label{eq:definition-of-Y-J-tilde-odd-case}
		\begin{split}
		Y & = \antiDiagTwo{1}{\IdentityMatrix{2m}} \oddPermutationMatrix \ShalikaUnipotentElementOdd{X} \ShalikaDiagonalElementOdd{ak} \ShalikaUpperRightUnipotentElementOdd{-\transpose{Z}} \oddPermutationMatrix^{-1} \\
		& = \cyclicGroupGenerator{2m+1}^{2m} \diagTwo{v_0}{1}  \oddPermutationMatrix \ShalikaUpperRightUnipotentElementOdd{-\transpose{Z}} \oddPermutationMatrix^{-1}.
		\end{split}
	\end{equation}
	Denote $$Y' = \diagTwo{v_0}{1} \oddPermutationMatrix \ShalikaUpperRightUnipotentElementOdd{-\transpose{Z}} \oddPermutationMatrix^{-1},$$ then $Y = \cyclicGroupGenerator{2m + 1}^{2m} Y'$, and $\quotientMap \left(Y'\right)$ is an upper triangular unipotent matrix with zeros right above its diagonal, as it is a product of such. $Y'$ also has zero at its left bottom corner. Therefore by \cref{eq:simple-supercuspidal-whittaker-function} \begin{equation}\label{eq:whittaker-value-for-J-tilde-odd-case}
		W \left(Y\right) = \rootOfUniformizer^{2m}.
	\end{equation}
	By \cref{eq:parameterization-of-a-J-tilde-odd-case} and \cref{eq:def-of-k0-odd-case-computation-of-J-tilde}, we have that $ak = \modifiedUniformizer k_0$, and therefore $\det\left(ak\right) = \left(\modifiedUniformizer\right)^m \det k_0$. Since $k_0 \in \proUnipotentRadical_m$, then $\det k_0 \in 1 + \maximalideal$, which implies \begin{equation}\label{eq:evaluation-of-determinant-J-tilde-odd-case}
		\begin{split}
		\abs{\det a}^s &= q^{-ms},\\		
		\unitaryDetCharacter\left(\det\left(ak\right)\right) &= \unitaryDetCharacter\left(\modifiedUniformizer\right)^m,
		\end{split}
	\end{equation}
	as $\unitaryDetCharacter$ is tamely ramified.
	
	By substituting in \cref{eq:iwasawa-decomposition-for-J-tilde-odd-case} the equations (\ref{eq:parameterization-of-a-J-tilde-odd-case})-(\ref{eq:evaluation-of-determinant-J-tilde-odd-case}), we have \begin{equation}\label{eq:final-evaluation-of-J-tilde-odd-case}
		\begin{split}
			\tilde{J} &= \int_{\Mat{1}{m}{\integersring}} \int_{\proUnipotentRadical_m} \int_{\left(\multiplicativegroup{\integersring}\right)^m} \int_{\NilpotentLowerTriangular_m\left( \maximalideal \right)} \rootOfUniformizer^{2m} q^{-\frac{m}{2}}  q^{-ms} \unitaryDetCharacter\left(\modifiedUniformizer\right)^m  dX \left(\prod_{i=1}^m\multiplicativeMeasure{u_i}\right) \multiplicativeMeasure{k_0} dZ \\
			& = \frac{1}{\grpIndex{\GL{m}{\residueField}}{\residueFieldUnipotentSubgroup{m}}} \frac{1}{\sizeof{\NilpotentLowerTriangular_m \left(\residueField\right)}} q^{-\frac{m}{2}} \rootOfUniformizer^{2m}  q^{-m s} \unitaryDetCharacter\left(\modifiedUniformizer\right)^m.
		\end{split}
	\end{equation}

	We get from \cref{eq:final-evaluation-of-J-odd-case} and \cref{eq:final-evaluation-of-J-tilde-odd-case}, $$\twistedGammaFactorOfLocalField{s} = \frac{\tilde{J}}{J} = \unitaryDetCharacter\left(\modifiedUniformizer\right)^m \rootOfUniformizer^{2m} q^{-ms} q^{\frac{m}{2}}.$$

	The result regarding the other local factors now follows from \Cref{thm:functional-equation-gamma-factor} and \Cref{thm:roots-of-l-function}.
\end{proof}

\section{Exterior square gamma factors local converse theorem}\label{section:local-converse-theorem}

In this section we present and prove a local converse theorem for simple supercuspidal representations. Unlike previous local converse theorems, which are usually based on Rankin-Selberg gamma factors, our theorem is based on twisted exterior square gamma factors.

\begin{thm} \label{thm:converse-theorem-for-simple-supercuspidal-data}
	Let $n = 2m$ or $n = 2m+1$. Let $\representationDeclaration{\localFieldRepresentation}$, $\representationDeclaration{\localFieldRepresentation'}$ be simple supercuspidal representations of $\GL{n}{\localField}$, with the same central character $\multiplicativeCharacter = \centralCharacter{\localFieldRepresentation} = \centralCharacter{\localFieldRepresentation'}$, such that $\localFieldRepresentation$, $\localFieldRepresentation'$ are associated with the data $\left(t_0, \rootOfUniformizer \right)$ and $\left(t_0' , \rootOfUniformizer'  \right)$ correspondingly, where $\rootOfUniformizer^n =  \multiplicativeCharacter\left(\modifiedUniformizer\right)$, $\rootOfUniformizer'^n =  \multiplicativeCharacter\left({t'}^{-1} \uniformizer \right)$, and $t,t' \in \multiplicativegroup{\integersring}$ are lifts of $t_0$, $t'_0$ respectively, i.e. $\quotientMap\left(t\right) = t_0$, $\quotientMap\left(t'\right) = t'_0$. Assume that  
	\begin{enumerate}
		\item If $n = 2m$, then $\gcd \left(m-1, q-1\right)=1$.
		\item If $n = 2m+1$, then $\gcd \left(m, q-1\right)=1$.
	\end{enumerate}
	Suppose that for every unitary tamely ramified character $\unitaryDetCharacter : \multiplicativegroup{\localField} \rightarrow \multiplicativegroup{\cComplex}$, we have \begin{equation}\label{eq:equality-of-twisted-gamma-factors}
		\twistedGammaFactorOfLocalField{s} = \twistedGammaFactor{s}{\localFieldRepresentation'}{\unitaryDetCharacter}.
	\end{equation} Then $\rootOfUniformizer = \pm \rootOfUniformizer'$ and $t_0 = t_0'$.
\end{thm}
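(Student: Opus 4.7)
The plan is to apply the explicit formulas from \Cref{thm:exterior-square-gamma-factor-for-odd-case,thm:exterior-square-gamma-factor-for-even-case} to the hypothesis \eqref{eq:equality-of-twisted-gamma-factors} and to extract the desired conclusions by varying the tamely ramified unitary character $\unitaryDetCharacter$. Set $u = t'/t \in \multiplicativegroup{\integersring}$; the aim is to show $\bar u = 1$ in $\multiplicativegroup{\residueField}$ (equivalently $t_0 = t_0'$) and then that $(\rootOfUniformizer/\rootOfUniformizer')^2 = 1$, with the additional equality $\rootOfUniformizer = \rootOfUniformizer'$ in the odd case.

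For $n = 2m + 1$, the formula in \Cref{thm:exterior-square-gamma-factor-for-odd-case} is a pure monomial in $q^{-s}$; after cancelling the common factor $q^{-m(s-1/2)}$ the hypothesis reduces to $\unitaryDetCharacter(u^m) (\rootOfUniformizer/\rootOfUniformizer')^{2m} = 1$ for every tamely ramified unitary $\unitaryDetCharacter$. Taking $\unitaryDetCharacter = 1$ gives $(\rootOfUniformizer/\rootOfUniformizer')^{2m} = 1$, whence $\unitaryDetCharacter(u^m) = 1$ for every such $\unitaryDetCharacter$. Since the restrictions $\unitaryDetCharacter \restriction_{\multiplicativegroup{\integersring}}$ exhaust $\widehat{\multiplicativegroup{\residueField}}$ as $\unitaryDetCharacter$ varies, this forces $\bar u^m = 1$, and the hypothesis $\gcd(m, q-1) = 1$ then yields $\bar u = 1$. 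With $t = t'$, the central character identity $\rootOfUniformizer^n = (\rootOfUniformizer')^n$ together with $(\rootOfUniformizer/\rootOfUniformizer')^{2m} = 1$ and $\gcd(2m+1, 2m) = 1$ forces $\rootOfUniformizer = \rootOfUniformizer'$.

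For $n = 2m$, set $\uniformizerValueForTwistedGamma = \rootOfUniformizer^2 \unitaryDetCharacter((-1)^{m-1} t^{-1} \uniformizer)$ and $\uniformizerValueForTwistedGamma'$ analogously, so that $\uniformizerValueForTwistedGamma/\uniformizerValueForTwistedGamma' = (\rootOfUniformizer/\rootOfUniformizer')^2 \unitaryDetCharacter(u)$. The crucial observation, which bypasses any case analysis on $\multiplicativeCharacter \cdot \unitaryDetCharacter^m$, is that both sub-cases of \Cref{thm:exterior-square-gamma-factor-for-even-case} yield the common identity $\uniformizerValueForTwistedGamma^{m-1} = (\uniformizerValueForTwistedGamma')^{m-1}$: in the non-trivial sub-case this is immediate, since the Gauss-sum factor depends only on $(\multiplicativeCharacter \cdot \unitaryDetCharacter^m) \restriction_{\multiplicativegroup{\residueField}}$ and cancels from both sides, leaving monomials $c \, \uniformizerValueForTwistedGamma^{m-1} q^{-(m-1)(s-1/2)}$ to be matched; in the trivial sub-case, clearing the denominators $1 - \uniformizerValueForTwistedGamma^{-1} q^{-(1-s)}$ in the identity of rational functions furnished by \Cref{thm:exterior-square-gamma-factor-for-even-case} and comparing the coefficient of $q^{-s}$ on both sides yields the same identity $\uniformizerValueForTwistedGamma^{m-1} = (\uniformizerValueForTwistedGamma')^{m-1}$. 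This latter rational-function comparison is the main technical step, but it amounts to a short calculation. Hence for every tame unitary $\unitaryDetCharacter$ one has $(\rootOfUniformizer/\rootOfUniformizer')^{2(m-1)} \unitaryDetCharacter(u^{m-1}) = 1$. Specialising to $\unitaryDetCharacter = 1$ gives $(\rootOfUniformizer/\rootOfUniformizer')^{2(m-1)} = 1$, so $\unitaryDetCharacter(u^{m-1}) = 1$ for all such $\unitaryDetCharacter$, forcing $\bar u^{m-1} = 1$ in $\multiplicativegroup{\residueField}$, and the hypothesis $\gcd(m-1, q-1) = 1$ then gives $\bar u = 1$. Finally, with $t = t'$, combining $(\rootOfUniformizer/\rootOfUniformizer')^{2(m-1)} = 1$ with the central character identity $(\rootOfUniformizer/\rootOfUniformizer')^{2m} = 1$ and $\gcd(2m, 2(m-1)) = 2$ yields $(\rootOfUniformizer/\rootOfUniformizer')^2 = 1$, completing the proof.
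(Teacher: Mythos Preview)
Your proof is correct but follows a genuinely different route from the paper, particularly in the even case.

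The paper treats the two sub-cases of \Cref{thm:exterior-square-gamma-factor-for-even-case} differently: when $(\multiplicativeCharacter\cdot\unitaryDetCharacter^m)\restriction_{\multiplicativegroup{\integersring}}=1$ it compares the poles of the two rational functions to obtain $\uniformizerValueForTwistedGamma=\uniformizerValueForTwistedGamma'$ directly; when $(\multiplicativeCharacter\cdot\unitaryDetCharacter^m)\restriction_{\multiplicativegroup{\integersring}}\neq 1$ it gets $(\uniformizerValueForTwistedGamma/\uniformizerValueForTwistedGamma')^{m-1}=1$ and then invokes the auxiliary relation $(\uniformizerValueForTwistedGamma/\uniformizerValueForTwistedGamma')^{m(q-1)}=1$ (coming from the central character being tamely ramified) together with $\gcd(m-1,m(q-1))=1$ to upgrade this to $\uniformizerValueForTwistedGamma=\uniformizerValueForTwistedGamma'$. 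From $\uniformizerValueForTwistedGamma=\uniformizerValueForTwistedGamma'$ for all $\unitaryDetCharacter$ the conclusions follow immediately. You instead extract only the weaker uniform relation $(\uniformizerValueForTwistedGamma/\uniformizerValueForTwistedGamma')^{m-1}=1$ in both sub-cases and then combine it with $(\rootOfUniformizer/\rootOfUniformizer')^{2m}=1$ (from $\rootOfUniformizer^n=(\rootOfUniformizer')^n$ once $t_0=t_0'$) and $\gcd(2m,2(m-1))=2$. Your argument is more uniform and avoids the case split; the paper's pole comparison is slightly quicker in the trivial sub-case and yields the stronger $\uniformizerValueForTwistedGamma=\uniformizerValueForTwistedGamma'$ at once. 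In the odd case the two arguments are essentially the same, except that the paper uses $(\rootOfUniformizer/\rootOfUniformizer')^{(2m+1)(q-1)}=1$ before knowing $t_0=t_0'$, while you first deduce $t_0=t_0'$ and then use $(\rootOfUniformizer/\rootOfUniformizer')^{2m+1}=1$.

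Two small points of precision. First, when you write ``with $t=t'$'' you really mean $t_0=t_0'$; the lifts $t,t'$ need not coincide, but tame ramification of $\multiplicativeCharacter$ gives $\multiplicativeCharacter(t^{-1}\uniformizer)=\multiplicativeCharacter((t')^{-1}\uniformizer)$ once $t\equiv t'\pmod{\maximalideal}$, which is what you need. Second, the phrase ``comparing the coefficient of $q^{-s}$'' is accurate only after first cancelling the common factor $q^{-(m-2)(s-1/2)}$ from both sides; once that is done, the coefficient of $q^{-s}$ in the cross-multiplied identity is indeed $-\uniformizerValueForTwistedGamma^{m-1}$ on the left and $-(\uniformizerValueForTwistedGamma')^{m-1}$ on the right.
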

\begin{proof}
	Suppose $n = 2m$. Let $\unitaryDetCharacter$ be a unitary tamely ramified character. Denote $\uniformizerValueForTwistedGamma = \rootOfUniformizer^2 \cdot \unitaryDetCharacter\left( \left(-1\right)^{m-1} \modifiedUniformizer \right)$, $\uniformizerValueForTwistedGamma' = \rootOfUniformizer'^2 \cdot \unitaryDetCharacter\left( \left(-1\right)^{m-1} t'^{-1} \uniformizer \right)$. We claim that $\uniformizerValueForTwistedGamma = \uniformizerValueForTwistedGamma'$.
	
	If $\left(\multiplicativeCharacter \cdot \unitaryDetCharacter^m \right)\restriction_{\multiplicativegroup{\integersring}} = 1$, we have by \Cref{thm:exterior-square-gamma-factor-for-even-case} and by \cref{eq:equality-of-twisted-gamma-factors} that \begin{equation}\label{eq:rational-functions-exterior-square-gamma-factors}
	 	\left(\uniformizerValueForTwistedGamma q^{-\left(s - \frac{1}{2}\right)}\right)^{m-2} \frac{1 - \uniformizerValueForTwistedGamma  q^{-s}}{1 - \uniformizerValueForTwistedGamma^{-1} q^{- \left(1-s\right)}} = \left(\uniformizerValueForTwistedGamma' q^{-\left(s - \frac{1}{2}\right)}\right)^{m-2} \frac{1 - \uniformizerValueForTwistedGamma'  q^{-s}}{1 - {\uniformizerValueForTwistedGamma'}^{-1} q^{- \left(1-s\right)}},
	 \end{equation}and we get $\xi = \xi'$ by comparing the poles of both sides of \cref{eq:rational-functions-exterior-square-gamma-factors}. 
	 
	 If $\left(\multiplicativeCharacter \cdot \unitaryDetCharacter^m \right)\restriction_{\multiplicativegroup{\integersring}} \ne 1$, we have by \Cref{thm:exterior-square-gamma-factor-for-even-case} and by \cref{eq:equality-of-twisted-gamma-factors} that
	 $$ \left(\uniformizerValueForTwistedGamma q^{- \left(s - \frac{1}{2}\right)} \right)^{m-1}  \frac{1}{\sqrt{q}} \sum_{\lambda \in \multiplicativegroup{\residueField}}{\fieldCharacter \left(\lambda\right) \multiplicativeCharacter \left(\lambda^{-1}\right) \unitaryDetCharacter \left(\lambda^{-m}\right)} = \left(\uniformizerValueForTwistedGamma' q^{- \left(s - \frac{1}{2}\right)} \right)^{m-1}  \frac{1}{\sqrt{q}} \sum_{\lambda \in \multiplicativegroup{\residueField}}{\fieldCharacter \left(\lambda\right) \multiplicativeCharacter \left(\lambda^{-1}\right) \unitaryDetCharacter \left(\lambda^{-m}\right)}.$$ Therefore \begin{equation}\label{eq:converse-thm-twisted-root-is-root-of-order-m-minus-1}
	 	\left(\frac{\uniformizerValueForTwistedGamma}{\uniformizerValueForTwistedGamma'}\right)^{m-1} = 1.
	 \end{equation}
	 On the other hand, \begin{equation}\label{eq:ratio-of-uniformizer-values-for-twisted-gamma-even}
	 	\frac{\uniformizerValueForTwistedGamma}{\uniformizerValueForTwistedGamma'} = \frac{\rootOfUniformizer^2 \unitaryDetCharacter \left( \left(-1\right)^{m-1} \modifiedUniformizer  \right)}{\rootOfUniformizer'^2 \unitaryDetCharacter \left( \left(-1\right)^{m-1} {t'}^{-1} \uniformizer \right)} = \frac{\rootOfUniformizer^2}{\rootOfUniformizer'^2} \unitaryDetCharacter\left( t' t^{-1} \right).
	 \end{equation}
	 Since $\rootOfUniformizer^{2m} = \multiplicativeCharacter\left(\modifiedUniformizer\right)$ and $\rootOfUniformizer'^{2m} = \multiplicativeCharacter\left( t'^{-1} \uniformizer \right)$, we get from \cref{eq:ratio-of-uniformizer-values-for-twisted-gamma-even} that $$\left(\frac{\uniformizerValueForTwistedGamma}{\uniformizerValueForTwistedGamma'}\right)^m = \frac{\multiplicativeCharacter\left(\modifiedUniformizer\right)}{\multiplicativeCharacter\left(t'^{-1} \uniformizer\right)} \unitaryDetCharacter\left(t' t^{-1}\right)^m = \multiplicativeCharacter\left(t' t^{-1}\right) \unitaryDetCharacter\left(t' t^{-1}\right)^m,$$
	 which implies that \begin{equation}\label{eq:converse-thm-twisted-root-is-root-of-order-m-times-q-minus-1}
	 	\left(\frac{\uniformizerValueForTwistedGamma}{\uniformizerValueForTwistedGamma'}\right)^{m \left(q - 1\right)} = 1,
	 \end{equation} as $\multiplicativeCharacter, \unitaryDetCharacter$ are tamely ramified. Since $m-1$ is coprime to $m$ and to $q-1$, we have that $\gcd\left(m-1,m\left(q-1\right)\right) = 1$, and therefore $m-1$ is invertible modulo $m \left(q - 1\right)$, which implies from \cref{eq:converse-thm-twisted-root-is-root-of-order-m-minus-1} and \cref{eq:converse-thm-twisted-root-is-root-of-order-m-times-q-minus-1} that $\frac{\xi}{\xi'} = 1$, i.e. $\uniformizerValueForTwistedGamma = \uniformizerValueForTwistedGamma'$.
	 
	 We proved $\uniformizerValueForTwistedGamma = \uniformizerValueForTwistedGamma'$. Therefore by \cref{eq:ratio-of-uniformizer-values-for-twisted-gamma-even}, we have $\unitaryDetCharacter\left( t' t^{-1} \right) = \left(\rootOfUniformizer' \rootOfUniformizer^{-1} \right)^2$ for every unitary tamely ramified character $\unitaryDetCharacter$. Choosing the trivial character, this implies that $\rootOfUniformizer^2 = \rootOfUniformizer'^2$. Suppose $t_0 \ne t_0'$, then there exists a unitary character $\unitaryDetCharacter_0 : \multiplicativegroup{\residueField} \rightarrow \multiplicativegroup{\cComplex}$, such that $\unitaryDetCharacter_0\left(t_0' t_0^{-1}\right) \ne 1$, and we can lift this to a unitary tamely ramified character $\unitaryDetCharacter : \multiplicativegroup{\localField} \rightarrow \multiplicativegroup{\cComplex}$ that satisfies $\unitaryDetCharacter \restriction_{\multiplicativegroup{\integersring}} = \unitaryDetCharacter_0 \circ \quotientMap$ and then $\unitaryDetCharacter\left(t' t^{-1}\right) = \unitaryDetCharacter_0\left(t_0' t_0^{-1}\right) \ne 1$, which is a contradiction to $\unitaryDetCharacter\left( t' t^{-1} \right) = \left(\rootOfUniformizer' \rootOfUniformizer^{-1} \right)^2 = 1$. Therefore, we must have $t_0 = t_0'$.
	 
	 For $n = 2m+1$, the proof is similar. We have that $$\left(\frac{\rootOfUniformizer}{\rootOfUniformizer'}\right)^{2m + 1} = \frac{\multiplicativeCharacter\left( \modifiedUniformizer \right)}{\multiplicativeCharacter \left( t'^{-1} \uniformizer \right)} = \multiplicativeCharacter\left( t' t^{-1} \right),$$ and therefore
	 \begin{equation}\label{eq:converse-thm-twisted-root-is-root-of-order-2m-plus-1-times-q-minus-1}
	 	\left(\frac{\rootOfUniformizer}{\rootOfUniformizer'}\right)^{\left(2m + 1\right)\left(q-1\right)} = 1,
	 \end{equation} as $\multiplicativeCharacter$ is tamely ramified.
	 By \cref{eq:equality-of-twisted-gamma-factors} and \Cref{thm:exterior-square-gamma-factor-for-odd-case}, we have for any unitary tamely ramified character $\mu$ $$\left(\unitaryDetCharacter\left( \modifiedUniformizer \right) \rootOfUniformizer^{2} q^{-\left(s - \frac{1}{2}\right)}\right)^m = \left(\unitaryDetCharacter\left( t'^{-1} \uniformizer \right) \rootOfUniformizer'^{2} q^{-\left(s - \frac{1}{2}\right)}\right)^m,$$ which implies that
	 \begin{equation}\label{eq:ratio-of-uniformizer-values-for-twisted-gamma-odd}
	 	\left(\frac{\rootOfUniformizer}{\rootOfUniformizer'}\right)^{2m} = \unitaryDetCharacter\left(t'^{-1} t\right)^m.
	 \end{equation}
	 Substituting the trivial character in \cref{eq:ratio-of-uniformizer-values-for-twisted-gamma-odd}, one gets $\rootOfUniformizer^{2m} = \rootOfUniformizer'^{2m}$, which implies that \begin{equation}\label{eq:converse-thm-twisted-root-is-root-of-order-m}
	 	\left(\frac{\rootOfUniformizer^2}{\rootOfUniformizer'^2}\right)^m = 1.
	 \end{equation} Since $\gcd\left(m, q - 1\right) = \gcd\left(m, 2m+1\right) = 1$, we get that $\gcd\left(m, \left(2m+1\right)\left(q-1\right)\right) = 1$, and therefore $m$ is invertible modulo $\left(2m+1\right)\left(q-1\right)$. By \cref{eq:converse-thm-twisted-root-is-root-of-order-2m-plus-1-times-q-minus-1},  \cref{eq:converse-thm-twisted-root-is-root-of-order-m}, this implies $\frac{\rootOfUniformizer^2}{\rootOfUniformizer'^2} = 1$. By \cref{eq:ratio-of-uniformizer-values-for-twisted-gamma-odd} and \cref{eq:converse-thm-twisted-root-is-root-of-order-m}, we have for every unitary tamely ramified character $\unitaryDetCharacter : \multiplicativegroup{\localField} \rightarrow \multiplicativegroup{\cComplex}$, $\unitaryDetCharacter \left( {t'}^{-1} t \right)^m = 1$. Since $\unitaryDetCharacter\left(t'^{-1} t\right)^{q-1} = 1$, and since $m$ is coprime to $q - 1$, $m$ is invertible modulo $q-1$, and therefore we have that for every unitary tamely ramified character $\unitaryDetCharacter$, $\unitaryDetCharacter\left({t'}^{-1} t\right) = 1$. As in the even case, this implies $t_0 = t_0'$.
\end{proof}

\begin{rem}
\begin{enumerate}
	\item In the even case, although we can not prove $\pi \cong \pi'$, we get $\zeta = \pm \zeta'$. On the other hand, if $\pi$ and $\pi'$ are simple supercuspidal representations with the same central character $\omega$ associated to the data $(t_0, \zeta)$ and $(t_0, -\zeta)$, then we must have $\twistedGammaFactorOfLocalField{s} = \twistedGammaFactor{s}{\localFieldRepresentation'}{\unitaryDetCharacter}$ for all tamely ramified character $\mu$. Because by \Cref{thm:exterior-square-gamma-factor-for-even-case},
	\begin{align*}
		\twistedGammaFactorOfLocalField{s} &= \left( \uniformizerValueForTwistedGamma q^{-\left(s - \frac{1}{2}\right)} \right)^{m - 1} \gammaFactorOfCharacter{s}{ \twistedMultiplicativeExteriorCharacter},\\
		\twistedGammaFactor{s}{\localFieldRepresentation'}{\unitaryDetCharacter} &= \left( \uniformizerValueForTwistedGamma' q^{-\left(s - \frac{1}{2}\right)} \right)^{m - 1} \gamma\left(s, \left(\omega \cdot \mu^m\right)_{\xi'}, \psi\right),
	\end{align*}
	and $\xi = \rootOfUniformizer^2 \cdot \unitaryDetCharacter\left( \left(-1\right)^{m-1} \modifiedUniformizer \right) = \xi'$.

	\item In the odd case, we actually get that $\rootOfUniformizer = \rootOfUniformizer'$, since $\rootOfUniformizer^2 = \rootOfUniformizer'^2$ and $\rootOfUniformizer^{2m+1} = \rootOfUniformizer'^{2m+1} = \multiplicativeCharacter\left( \modifiedUniformizer \right)$, and then $$\rootOfUniformizer = \frac{\rootOfUniformizer^{2m+1}}{\left(\rootOfUniformizer^2\right)^m} = \frac{\rootOfUniformizer'^{2m+1}}{\left(\rootOfUniformizer'^2\right)^m} = \rootOfUniformizer'.$$
As a consequence, when the hypotheses in \Cref{thm:converse-theorem-for-simple-supercuspidal-data} are met, we have $t_0 = t_0'$ and $\rootOfUniformizer = \rootOfUniformizer'$, so $\pi \cong \pi'$.
\end{enumerate}
\end{rem}

\subsection*{Acknowledgments}

The authors thank Zahi Hazan for introducing them to simple supercuspidal representations. The authors thank James Cogdell for his comments on an earlier version of this paper.

\bibliographystyle{amsalpha}
\bibliography{bibliography}

\providecommand{\bysame}{\leavevmode\hbox to3em{\hrulefill}\thinspace}
\providecommand{\MR}{\relax\ifhmode\unskip\space\fi MR }
\providecommand{\MRhref}[2]{%
  \href{http://www.ams.org/mathscinet-getitem?mr=#1}{#2}
}
\providecommand{\href}[2]{#2}
\begin{thebibliography}{ALST18}

\bibitem[AL16]{adrian2016some}
Moshe Adrian and Baiying Liu, \emph{Some results on simple supercuspidal
  representations of {$\mathrm{GL}_n(F)$}}, J. Number Theory \textbf{160}
  (2016), 117--147. \MR{3425202}

\bibitem[ALST18]{AdrianLiuStevensTam18}
Moshe Adrian, Baiying Liu, Shaun Stevens, and Geo Kam-Fai Tam, \emph{On the
  sharpness of the bound for the local converse theorem of {$p$}-adic
  {$\mathrm{GL}_{prime}$}}, Proc. Amer. Math. Soc. Ser. B \textbf{5} (2018),
  6--17. \MR{3765745}

\bibitem[BH14]{BushnellHenniart14}
Colin~J. Bushnell and Guy Henniart, \emph{Langlands parameters for epipelagic
  representations of {${\rm GL}_n$}}, Math. Ann. \textbf{358} (2014), no.~1-2,
  433--463. \MR{3158004}

\bibitem[Cha19]{Chai19}
Jingsong Chai, \emph{Bessel functions and local converse conjecture of
  {J}acquet}, J. Eur. Math. Soc. (JEMS) \textbf{21} (2019), no.~6, 1703--1728.
  \MR{3945739}

\bibitem[CM15]{CogdellMatringe15}
James~W. Cogdell and Nadir Matringe, \emph{The functional equation of the
  {J}acquet-{S}halika integral representation of the local exterior-square
  {$L$}-function}, Math. Res. Lett. \textbf{22} (2015), no.~3, 697--717.
  \MR{3350100}

\bibitem[GR00]{GinzburgRallis00}
David Ginzburg and Stephen Rallis, \emph{The exterior cube {$L$}-function for
  {${\rm GL}(6)$}}, Compositio Math. \textbf{123} (2000), no.~3, 243--272.
  \MR{1795291}

\bibitem[GR10]{GrossReeder10}
Benedict~H. Gross and Mark Reeder, \emph{Arithmetic invariants of discrete
  {L}anglands parameters}, Duke Math. J. \textbf{154} (2010), no.~3, 431--508.
  \MR{2730575}

\bibitem[JL18]{JacquetLiu16}
Herv\'{e} Jacquet and Baiying Liu, \emph{On the local converse theorem for
  {$p$}-adic {${\rm GL}_n$}}, Amer. J. Math. \textbf{140} (2018), no.~5,
  1399--1422. \MR{3862069}

\bibitem[JNS15]{JiangNienStevens15}
Dihua Jiang, Chufeng Nien, and Shaun Stevens, \emph{Towards the {J}acquet
  conjecture on the local converse problem for {$p$}-adic {${\rm GL}_n$}}, J.
  Eur. Math. Soc. (JEMS) \textbf{17} (2015), no.~4, 991--1007. \MR{3349305}

\bibitem[Jo20]{jo2018derivatives}
Yeongseong Jo, \emph{Derivatives and exceptional poles of the local exterior
  square {$L$}-function for {$GL_m$}}, Math. Z. \textbf{294} (2020), no.~3-4,
  1687--1725. \MR{4074055}

\bibitem[JS90]{jacquet11exterior}
Herv\'e Jacquet and Joseph Shalika, \emph{Exterior square {$L$}-functions},
  Automorphic forms, {S}himura varieties, and {$L$}-functions, {V}ol.\ {II}
  ({A}nn {A}rbor, {MI}, 1988), Perspect. Math., vol.~11, Academic Press,
  Boston, MA, 1990, pp.~143--226. \MR{1044830}

\bibitem[KL15]{knightly2015simple}
Andrew Knightly and Charles Li, \emph{Simple supercuspidal representations of
  {${\rm GL}(n)$}}, Taiwanese J. Math. \textbf{19} (2015), no.~4, 995--1029.
  \MR{3384676}

\bibitem[Kud03]{kudla2004tate}
Stephen~S. Kudla, \emph{Tate's thesis}, An introduction to the {L}anglands
  program ({J}erusalem, 2001), Birkh\"{a}user Boston, Boston, MA, 2003,
  pp.~109--131. \MR{1990377}

\bibitem[Mat14]{matringe2014linear}
Nadir Matringe, \emph{Linear and {S}halika local periods for the mirabolic
  group, and some consequences}, J. Number Theory \textbf{138} (2014), 1--19.
  \MR{3168918}

\bibitem[NZ18]{NienZhang18}
C.~{Nien} and L.~{Zhang}, \emph{{Converse Theorem Meets Gauss Sums (with an
  appendix by Zhiwei Yun)}}, arXiv: 1806.04850 (2018).

\bibitem[Ram94]{Ramakrishnan94}
Dinakar Ramakrishnan, \emph{Pure motives and automorphic forms}, Motives
  ({S}eattle, {WA}, 1991), Proc. Sympos. Pure Math., vol.~55, Amer. Math. Soc.,
  Providence, RI, 1994, pp.~411--446. \MR{1265561}

\bibitem[RV99]{ramakrishnan2013fourier}
Dinakar Ramakrishnan and Robert~J. Valenza, \emph{Fourier analysis on number
  fields}, Graduate Texts in Mathematics, vol. 186, Springer-Verlag, New York,
  1999. \MR{1680912}

\bibitem[Sha90]{Shahidi90}
Freydoon Shahidi, \emph{A proof of {L}anglands' conjecture on {P}lancherel
  measures; complementary series for {$p$}-adic groups}, Ann. of Math. (2)
  \textbf{132} (1990), no.~2, 273--330. \MR{1070599}

\bibitem[YZ20]{ye2018exterior}
Rongqing Ye and Elad Zelingher, \emph{Exterior square gamma factors for
  cuspidal representations of {${\rm GL}_n$}: finite field analogs and
  level-zero representations}, Israel J. Math. \textbf{240} (2020), no.~2,
  889--934. \MR{4193154}

\end{thebibliography}

\end{document}